\documentclass[final]{amsart}
\usepackage{rjmacros4}
\begin{document}
\title{Weyl families of transformed boundary pairs}
\author{Rytis Jur\v{s}\.{e}nas}
\address{Vilnius University,
Institute of Theoretical Physics and Astronomy,
Saul\.{e}tekio ave.~3, 10257 Vilnius, Lithuania}
\email{rytis.jursenas@tfai.vu.lt}
\keywords{Linear relation, Hilbert space,
Krein space, Pontryagin space, isometric boundary pair,
unitary boundary pair, essentially unitary boundary pair,
ordinary boundary triple, Weyl family, gamma field,
Shmul'yan transform}
\subjclass[2010]{Primary 47A06, 47B50; Secondary 47B25, 46C20}
\date{\today}
\begin{abstract}
Let $(\fL,\Gamma)$ be an isometric boundary
pair associated with a closed symmetric linear
relation $T$ in a Krein space $\fH$. Let $M_\Gamma$ be the
Weyl family corresponding to $(\fL,\Gamma)$.
We cope with two main topics. First,
since $M_\Gamma$ need not be (generalized) Nevanlinna,
the characterization of the closure and the adjoint
of a linear relation $M_\Gamma(z)$, for
some $z\in\bbC\setm\bbR$, becomes a nontrivial
task. Regarding $M_\Gamma(z)$ as the
(Shmul'yan) transform of $zI$ induced by $\Gamma$,
we give conditions for the equality in
$\ol{M_\Gamma(z)}\subseteq\ol{M_{\ol{\Gamma}}(z)}$
to hold and we compute the adjoint $M_{\ol{\Gamma}}(z)^*$.
As an application we ask when
the resolvent set of the main transform
associated with a unitary boundary pair
for $T^+$ is nonempty. Based on the criterion
for the closeness of $M_\Gamma(z)$ we
give a sufficient condition for the answer.
From this result it follows, for example,
that, if $T$ is a standard linear relation
in a Pontryagin space then the Weyl family $M_\Gamma$
corresponding to a boundary relation $\Gamma$ for $T^+$
is a generalized Nevanlinna family;
a similar conclusion is already known if $T$
is an operator.
In the second topic we characterize
the transformed boundary pair $(\fL^\prime,\Gamma^\prime)$
with its Weyl family $M_{\Gamma^\prime}$.
The transformation scheme is either
$\Gamma^\prime=\Gamma V^{-1}$ or
$\Gamma^\prime=V\Gamma$ with suitable
linear relations $V$.
Results in this direction
include but are not limited to:
a 1-1 correspondence between $(\fL,\Gamma)$ and
$(\fL^\prime,\Gamma^\prime)$; the formula for
$M_{\Gamma^\prime}-M_\Gamma$, for an
ordinary boundary triple and a standard unitary
operator $V$ (first scheme); construction
of a quasi boundary triple from an isometric
boundary triple $(\fL,\Gamma_0,\Gamma_1)$
with $\ker\Gamma=T$ and $T_0=T^*_0$
(second scheme, Hilbert space case).
\end{abstract}
\maketitle
%%%%%%%%%%%%%%%%%%%%%%%%%%%%%%%%%%%%%%%%%%%%%%%%%%%%%%%%%%%%%%
%%%%%%%%%%%%%%%%%%%%%%%%%%%%%%%%%%%%%%%%%%%%%%%%%%%%%%%%%%%%%%
%%%%%%%%%%%%%%%%%%%%%%%%%%%%%%%%%%%%%%%%%%%%%%%%%%%%%%%%%%%%%%
%%%%%%%%%%%%%%%%%%%%%%%%%%%%%%%%%%%%%%%%%%%%%%%%%%%%%%%%%%%%%%
\section{Introduction}
%%%%%%%%%%%%%%%%%%%%%%%%%%%%%%%%%%%%%%%%%%%%%%%%%%%%%%%%%%%%%%
%%%%%%%%%%%%%%%%%%%%%%%%%%%%%%%%%%%%%%%%%%%%%%%%%%%%%%%%%%%%%%
%%%%%%%%%%%%%%%%%%%%%%%%%%%%%%%%%%%%%%%%%%%%%%%%%%%%%%%%%%%%%%
%%%%%%%%%%%%%%%%%%%%%%%%%%%%%%%%%%%%%%%%%%%%%%%%%%%%%%%%%%%%%%
In this paper we presume that the
reader is familiar with the concept of an
abstract boundary value space of a symmetric
linear relation in a Krein space.
Particularly,
an operator is viewed as a linear relation via its graph.
For a modern treatment of linear relations
in Hilbert spaces the reader may refer to
\cite{Behrndt20,Hassi18,Hassi09,Hassi07,Hassi07b} and
for a classical theory of operators
in spaces with an indefinite metric one should look at
\cite{Azizov89,Azizov79,Iokhvidov76,Shmulyan74,Krein71,Ginzburg57,Gokhberg57,Iokhvidov56} and references therein.

To adapt notation and terminology,
a Krein space $\fH=(\fH,[\cdot,\cdot])$, or
a $J$-space, is a Hilbert space
$\fH=(\fH,\braket{\cdot,\cdot})$ equipped with
a fundamental symmetry $J$ and an indefinite
metric $[\cdot,\cdot]:=\braket{\cdot,J\cdot}$.
If several Krein spaces are considered,
we add a subscript $J=J_\fH$, and
similarly for an identity operator $I=I_\fH$,
scalar products, and norms.
A linear relation is referred to as a relation.

The main object of interest is
a \textit{boundary pair} (\bp\!\!),
$(\fL,\Gamma)$, where $\fL$ is a Hilbert space
and $\Gamma$ is an isometric relation
from a $\whJ_\fH$-space to a $\whJ^\circ_\fL$-space.
The fundamental symmetries in $\fH^2$
and $\fL^2$ are defined by
\[
\whJ_\fH:=\begin{pmatrix}
0 & -\img J_\fH \\ \img J_\fH & 0
\end{pmatrix}\quad\text{and}\quad
\whJ^\circ_\fL:=\begin{pmatrix}
0 & -\img I_\fL \\ \img I_\fL & 0
\end{pmatrix}\,.
\]
The (Krein space) adjoint $\Gamma^+$ is a
closed relation $\Gamma^+:=\whJ_\fH\Gamma^*\whJ^\circ_\fL$, where
$\Gamma^*$ is the (Hilbert space) adjoint of
$\Gamma$; the definition generalizes to an
arbitrary relation as usual. With this notation
$\Gamma$ is isometric iff $\Gamma\subseteq\Gamma_\#$,
where $\Gamma_\#:=(\Gamma^+)^{-1}$ and the
inverse is in the sense of linear relations
unless stated otherwise.

Suppose a closed relation $T=\ker\Gamma_\#$
is symmetric in a Krein space $\fH$, that is,
$T\subseteq T^+$.
Then a \bp $(\fL,\Gamma)$ is called an
\textit{isometric boundary pair} (\ibp\!\!) for $T^+$.
If $\Gamma$ is (essentially) unitary,
\ie ($\ol{\Gamma}=\Gamma_\#$)
$\Gamma=\Gamma_\#$, then $(\fL,\Gamma)$ is
said to be an
(\textit{essentially}) \textit{unitary boundary pair}
(abbreviated \eeubp\!\!) for $T^+$.
If $(\fL,\Gamma)$ is a \ubp for $T^+$ then
$\Gamma$ is also termed a \textit{boundary relation}
(\br\!\!) for $T^+$.

Let $T$ be as above, then $\fT:=JT$
is a closed symmetric relation in a Hilbert
space $\fH$, \ie $\fT\subseteq \fT^*=JT^+$.
An \ibp for $T^+$ always exists, since
the existence of a \ubp for $T^+$ is established
via a Hilbert space unitary operator from the existence of
a \ubp for $\fT^*$ \cite{Behrndt11,Derkach06}.

With $\Gamma$ one associates two relations,
$\Gamma_0:=\pi_0(\Gamma)$ and $\Gamma_1:=\pi_1(\Gamma)$,
in a usual way by using
\begin{align*}
\pi_0&\co\fH^2\times\fL^2\lto\fH^2\times\fL\,,\quad
(\whf,\whl)\mapsto(\whf,l)\,,\quad
\whl=(l,l^\prime)\,,
\\
\pi_1&\co\fH^2\times\fL^2\lto\fH^2\times\fL\,,\quad
(\whf,\whl)\mapsto(\whf,l^\prime)\,.
\end{align*}
The associated relations
$T_0:=\ker\Gamma_0$ and $T_1:=\ker\Gamma_1$
are symmetric in a Krein space $\fH$.
Similarly one puts
$(\ol{\Gamma})_i:=\pi_i(\ol{\Gamma})$
and $\mathring{T}_i:=\ker(\ol{\Gamma})_i$ for
$i\in\{0,1\}$.

If $\Gamma$ is an operator,
a \bp $(\fL,\Gamma)$ is referred to as a
\textit{boundary triple} (\bt\!\!).
In this way an \ibp $(\fL,\Gamma)$ for $T^+$
reduces to an \textit{isometric boundary triple} (\ibt\!\!)
$\Pi_\Gamma=(\fL,\Gamma_0,\Gamma_1)$ for $T^+$.
If $\Gamma$ is a closable operator then
$\Pi_{\ol{\Gamma}}=(\fL,(\ol{\Gamma})_0,(\ol{\Gamma})_1)$
is also an \ibt for $T^+$.

An \ibp $(\fL,\Gamma)$ for $T^+$ is
a generalization of an
\textit{ordinary boundary triple} (\obt\!\!)
$\Pi_\Gamma$ for $T^+$,
since a \ubp $(\fL,\Gamma)$ with a surjective $\Gamma$
is an \obt $\Pi_\Gamma$.
The systematic study of \obt\!\!'s in a Hilbert space
setting traces to \cite{Derkach91}
and to earlier references therein, starting from
\cite{Calcin39}.
It goes without saying that the \obt\!\!'s
play a central role in studying differential
operators with boundary conditions, where
they appear via the classical Green's formula.

A crucial advantage
of an \ibp over an \obt is that in the former
case $T$ need not have equal defect numbers,
by which we mean the defect numbers of $\fT$.
That is, $T$ has an \obt iff $\fT$ has equal
defect numbers
\cite{Behrndt11,Derkach06,Derkach95a,Jonas95,Shmulyan74}.
The so-called
$D$-\bt\!\!'s, which generalize
\obt\!\!'s to the case of not necessarily equal
defect numbers, or \obt\!\!'s for dual pairs of
operators are not discussed in this paper.
For these types of \bt\!\!'s we refer the reader to
\cite{Mogilevski11,Mogilevski09,Mogilevski06,Hassi13,Malamud02,Malamud97}.

A standard formulation of a \bp for $T^+$,
see \eg
\cite{Derkach21,Derkach17,Derkach12,Behrndt11,Derkach06,Derkach95,Derkach94},
formally differs from that given above but is
nevertheless equivalent to it. This is because,
by standard definition, for an
\ibp $(\fL,\Gamma)$ associated with a closed symmetric
relation, say $A$, it necessarily holds
$A=T(=\ker\Gamma_\#)$.

As in a Hilbert space setting,
with an \ibp $(\fL,\Gamma)$ for $T^+$ one
associates the \textit{Weyl family} $M_\Gamma$ and
the \textit{$\gamma$-field} $\gamma_\Gamma$ as follows
(\cite{Behrndt11,Derkach17}):
\[
M_\Gamma(z):=\Gamma(\whfN_z(A_*))\,,\quad
A_*:=\dom\Gamma\,,\quad
\whfN_z(A_*):=A_*\mcap zI\,,
\]
\[
\gamma_\Gamma(z):=
\{(l,f_z)\vrt f_z\in\fN_z(A_*)\,;\,
(\whf_z,l)\in\Gamma_0 \}\,,\quad
\whf_z=(f_z,zf_z)\,.
\]
For an \ibt $\Pi_\Gamma$
the corresponding Weyl family and $\gamma$-field
reduce to
\[
M_\Gamma(z)=\{(\Gamma_0\whf_z,\Gamma_1\whf_z)\vrt
\whf_z\in\whfN_z(A_*) \}\,,
\]
\[
\gamma_\Gamma(z)=\{(\Gamma_0\whf_z,f_z)\vrt
f_z\in\fN_z(A_*) \}\,.
\]
In case an \ibt is an \obt
the main difference between these
definitions and those \eg in
\cite{Derkach99,Derkach95,Derkach95a,Derkach94}
is that here
$z$ takes values from $\bbC_*:=\bbC\setm\bbR$, namely,
the definitions do not require
a self-adjoint relation
$T_0$ ($=T^+_0$) to have a nonempty resolvent set
$\res T_0$. Since $\bbC_*\subseteq\res T_0$
need not hold, now $M_\Gamma(z)$,
$\gamma_\Gamma(z)$ need not be (bounded) operators.
If, however, $M_\Gamma(z)$ is an operator
for some $z\in\bbC_*$, then $M_\Gamma$ is
termed the \textit{Weyl function}.

In a Hilbert space case,
Weyl families are fundamental
in classifying Nevanlinna (sub)families
via specific \bp\!\!'s \cite{Derkach17},
since the Weyl family corresponding to a \ubp
$(\fL,\Gamma)$ for $T^*$
is a Nevanlinna family \cite{Derkach06}.
The famed \ubp\!\!'s are as follows:
\begin{itemize}
\item[$*$]
$ES$-generalized \bp
(\ubp with $\ol{T_0}=T^*_0$);
\item[$*$]
$S$-generalized \bp
($ES$-generalized \bp with $T_0=\ol{T_0}$);
\item[$*$]
$B$-generalized \bp
($S$-generalized \bp with a surjective $\Gamma_0$).
\end{itemize}
An \ibp is called
an $AB$-generalized \bp\!\! if
$T_0=T^*_0$ and $\Gamma_0$ has dense range.
Particularly,
an $AB$-generalized \bp with $\Gamma$ having
dense range is a \bt known as a
\textit{quasi boundary triple} (\qbt\!\!).
The notion of a \qbt was introduced earlier in
\cite{Behrndt07}.

In a Pontryagin space setting
(\eg $\dim(I-J)\fH<\infty$),
the Weyl family $M_\Gamma$ corresponding to a \ubp
$(\fL,\Gamma)$ for $T^+$---where
a \br $\Gamma$ is either minimal
(\ie $T$ is simple, hence an operator)
or $\Gamma$ need not be minimal, but then
$T$ is assumed to be an operator---is a
generalized Nevanlinna family
\cite{Behrndt11}. As pointed out in
\cite{Behrndt11}, the transformation results
do not make it possible to derive \textit{specific}
analytic properties
of a Weyl family of $T$ from a Weyl family
of $\fT$, or vice verse. As the following
example (Example~\ref{exam:fT}, Theorem~\ref{thm:IUBP3})
illustrates, \textit{some} of the properties
can nonetheless be grasped:
Let $T$ be a densely defined, closed, symmetric
operator in a Krein space $\fH$,
let $\Pi_\Gamma$ be an \obt for $T^+$
with Weyl function $M_\Gamma$ and $\gamma$-field
$\gamma_\Gamma$. The Weyl function $M_{\Gamma^\prime}$
corresponding to an \obt $\Pi_{\Gamma^\prime}=\Pi_\Gamma$
for $\fT^*$
(here $\dom\Gamma=T^+$ is identified with
$\dom T^+=\dom\fT^*$) is given by
\begin{equation}
M_{\Gamma^\prime}(z)-M_\Gamma(z)=
z\Gamma_1(\fT_0-z)^{-1}(I-J)\gamma_\Gamma(z)
\label{eq:fTex}
\end{equation}
for $z\in\res T_0\mcap\res\fT_0$; $\fT_0:=J T_0=\fT^*_0$.

Although in this paper we concentrate on ``kinematic''
rather than analytic properties of Weyl families,
some closeness and existence results are included too,
as explained next.
%%%%%%%%%%%%%%%%%%%%%%%%%%%%%%%%%%%%%%%%%%%%%%%%%%%%%%%%%%%%%%
%%%%%%%%%%%%%%%%%%%%%%%%%%%%%%%%%%%%%%%%%%%%%%%%%%%%%%%%%%%%%%
%%%%%%%%%%%%%%%%%%%%%%%%%%%%%%%%%%%%%%%%%%%%%%%%%%%%%%%%%%%%%%
%%%%%%%%%%%%%%%%%%%%%%%%%%%%%%%%%%%%%%%%%%%%%%%%%%%%%%%%%%%%%%
\subsection*{Synopsis. Main results}
%%%%%%%%%%%%%%%%%%%%%%%%%%%%%%%%%%%%%%%%%%%%%%%%%%%%%%%%%%%%%%
%%%%%%%%%%%%%%%%%%%%%%%%%%%%%%%%%%%%%%%%%%%%%%%%%%%%%%%%%%%%%%
%%%%%%%%%%%%%%%%%%%%%%%%%%%%%%%%%%%%%%%%%%%%%%%%%%%%%%%%%%%%%%
We are interested in two seemingly
different but actually closely related topics,
since nearly all results presented here have their
common roots in the equality $(V\hsum W)^+=V^+\mcap W^+$,
\eg \cite{Hassi20,Hassi09,Azizov89}, and in
the conditions for:
the closeness of the componentwise sum $V\hsum W$
of relations;
the equality in $(RX)^+\supseteq X^+R^+$
for appropriate relations;
the equality in $V(T)^+\supseteq V_\#(T^+)$,
$V_\#:=(V^+)^{-1}$,
for appropriate relations.
Results of different kind for obtaining
\eg a unitary operator
from another unitary operator can be found in
\cite{Wietsma12}, but they
have different purpose, not directly related to ours.
For the existence results on factorization
of relations the reader may look at \cite{Popovici13}.

After a preliminary Section~\ref{sec:knl},
in Section~\ref{sec:prelim2} we
consider a relation $T$ in a Krein space $\fH$
and its (Shmul'yan) transform $V(T)$ induced
by a relation $V$ from a $\whJ_\fH$-space
to a $\whJ_\cH$-space. We examine
the adjoint $V(T)^+$ based on a general hypothesis
that there is the equality in
$\ol{V\vrt_T}\subseteq\ol{V}\vrt_{\ol{T}}$,
where $V\vrt_T$ stands for the domain restriction to
$T$ of $V$.

Let $T$ be a closed symmetric relation in
a Krein space $\fH$, let $(\fL,\Gamma)$ be
an \ibp for $T^+$ with Weyl family $M_\Gamma$;
then $(\fL,\ol{\Gamma})$ is also an \ibp for $T^+$
with Weyl family $M_{\ol{\Gamma}}$.
Regarding $M_\Gamma(z)$ as the Shmul'yan transform of
$zI$ induced by $\Gamma$, in Section~\ref{sec:transW}
we apply the results from the previous section
for the characterization of the closeness of
$M_\Gamma(z)$ and the computation of its
adjoint $M_\Gamma(z)^*$.
In Theorem~\ref{thm:rrz}
we prove that, for all $z\in\bbC_*$ such that
$\ran(A_\star-zI)$ is a subspace, where
$A_\star:=\dom\ol{\Gamma}$, the adjoint
$M_{\ol{\Gamma}}(z)^*$ is the Shmul'yan transform
of $\ol{z}I$ induced by $\Gamma_\#$; if also
$\ran(A_\#-\ol{z}I)$ is a subspace, where
$A_\#:=\dom\Gamma_\#$, then $M_{\ol{\Gamma}}(z)$
is a closed relation. Here and below, by a subspace
we mean a closed linear subset.
We apply the result to the following cases:
$T$ is a standard relation \cite{Azizov03} in a Pontryagin
space (Corollary~\ref{cor:rrzz},
Examples~\ref{exam:Pstan}, \ref{exam:Pstan2})
and $\Gamma=\ol{\Gamma}$ is a \br for $T^+$, in which case
$M_\Gamma$ is a generalized Nevanlinna family
(\cf \cite[Theorem~4.8]{Behrndt11});
$T=\ker\ol{\Gamma}$ has finite defect numbers
(Corollary~\ref{cor:rrz}).

Conditions for the equality in
$\ol{M_\Gamma(z)}\subseteq\ol{M_{\ol{\Gamma}}(z)}$
to hold are also discussed.
The topic is important, for example,
in a Hilbert space setting, provided
one asks whether or not the Weyl family
$M_\Gamma$ corresponding to
an \eubp $(\fL,\Gamma)$ for $T^*$
satisfies $\ol{M_\Gamma(z)}=M_{\ol{\Gamma}}(z)$,
\ie whether or not $z\mapsto\ol{M_\Gamma(z)}$
is a Nevanlinna family. A sufficient
condition for an affirmative answer
is that $\ol{\Gamma\vrt_{zI}}=
\ol{\Gamma}\vrt_{zI}$, and by
Theorem~\ref{thm:PzfN} the latter equality holds
if $\Gamma$ is a suitable closable operator; take,
for example, an \ibt $\Pi_\Gamma$ for $T^*$ such that
$\fN_z(A_*)=\fN_z(T^*)\mcap\ran(A_*+\ol{z}^{\;-1}I)$
(particularly $\fN_z(A_*)=\fN_z(T^*)$ will do)
and such that
$\Pi_{\ol{\Gamma}}$ is an \obt for $T^*$.

The transformation results for \bp\!\!'s
can be useful when dealing with some existence issues.
For example, there exists an \obt $\Pi_\Gamma$ for $T^+$
with the Weyl family $M_\Gamma$ such that $M_\Gamma(z)$
is an operator for some $z$ iff
there exists an \obt $\Pi_{\Gamma^\prime}$ for $T^+$
with the Weyl family $M_{\Gamma^\prime}$ such that
$M_{\Gamma^\prime}(z)$ is an injective relation for some $z$.
We emphasize that, in a general Krein space setting,
$M_\Gamma(z)$ is generally
neither an operator nor injective, since
the multivalued part and the kernel are
represented by
\[
\mul M_\Gamma(z)=\Gamma_1(\whfN_z(T_0))\,,\quad
\ker M_\Gamma(z)=\Gamma_0(\whfN_z(T_1))
\]
and they need not be trivial even when $\Gamma$
is an operator.

A more involved example concerns the existence
of a nonempty resolvent set of the main transform
associated with a \bp The topic is important
at least in that the nonemptyness of the resolvent
set allows one to characterize a number of
operator functions by applying the methods already
developed in a Hilbert space setting
(where one heavily relies on the fact that $\bbC_*$
is a subset of the resolvent set of a self-adjoint
relation) \cite{Derkach06}.

The main transform of a \br $\Gamma$ for $T^+$,
here denoted by $\Gamma\mapsto\cJ(\Gamma)$,
establishes the correspondence between
\bp\!\!'s and exit space extensions to
$\fH\times\fL$ of $T$, and was first introduced
in \cite{Derkach06} in the context of Hilbert spaces;
see \eqref{eq:mrTG} for a precise definition.
If $\Gamma=\Gamma_\#$ then
$\cJ(\Gamma)$ is a self-adjoint relation
in a Krein space $\fH\times\fL$
(see also \cite[Section~3.2]{Behrndt11}), but generally
it can have an empty resolvent set
(\eg $\cJ(\Gamma)=I\times I$ with
$\fH=\fL=\bbC$ and $J=-1$ in
\cite[Example~3.7]{Behrndt11}).
When $T$ is a closed symmetric
operator in a Pontryagin space
and $\Gamma$ is a \br for $T^+$,
it is shown in \cite[Proposition~3.12]{Behrndt11}
that there exists a \br for $T^+$ whose
main transform has a nonempty resolvent set.
For a closed symmetric relation $T$ in a
Krein space
we derive a similar result in Theorem~\ref{thm:resTG},
namely, if there is a \br $\Gamma$
for $T^+$ with a nonempty set of
$z\in\bbC_*\setm\sigma_p(T)$ such that
$\ol{z}\in\bbC_*\setm\sigma_p(T)$ and such that
$\ran(T-\omega I)$ is a subspace
for both $\omega=z$ and $\omega=\ol{z}$, then
$\Gamma$ is in 1-1 correspondence with
a \br $\Gamma^\prime$ for $T^+$ such that
$\res\cJ(\Gamma^\prime)\neq\emptyset$.
The proof relies on Theorem~\ref{thm:rrz},
the canonical decomposition of a closed relation
$M_\Gamma(z)$, and a 1-1 correspondence
between \ubp\!\!'s $(\fL,\Gamma)$ and
$(\fL,\Gamma^\prime:=V\Gamma)$ for $T^+$,
with a standard unitary operator $V$ in
a $\whJ^\circ_\fL$-space.

Another type of problems that can be solved
with the help of transformations and that
interests us most
is the characterization of a \bp
provided one is given another \bp
A transformation can be convenient in that
certain properties of \bp\!\!'s are easier
to handle in one ``coordinate system'' than in another.
We restrict ourselves
to cases (1) $\Gamma^\prime:=\Gamma V^{-1}$ and (2)
$\Gamma^\prime:=V\Gamma$, where $V$ is a suitable
relation in each case separately.

In the context of transformed boundary value spaces
we have found in the literature
these types of transformations:

$-$
With a standard unitary operator
$V=U_{J}:=\bigl(\begin{smallmatrix}I & 0 \\
0 & J
\end{smallmatrix} \bigr)$, case (1)
is extensively studied in \cite{Behrndt11}
in establishing correspondences between
a symmetric relation $T$ in a Krein space
and a symmetric relation $\fT$ in a Hilbert
space;
see also Examples~\ref{exam:fT}, \ref{exam:fT2}.

$-$
With a standard unitary operator of a general
form,
$V=\bigl(\begin{smallmatrix}A & B \\
C & D
\end{smallmatrix} \bigr)$, case (2)
can be called a classical one, see \eg
\cite{Behrndt20,Derkach12,Derkach09} and references
there.

$-$
With an injective operator
$V=\bigl(\begin{smallmatrix}G^{-1} & 0 \\
EG^{-1} & G^*
\end{smallmatrix} \bigr)$, $V\subseteq V_\#$,
case (2) is extensively studied in \cite{Derkach17},
see also Section~\ref{sec:exV}. This type of
$V$ has direct applications to
boundary value problems of differential
operators.

Our main results in this direction are
as follows.

\textit{Transformation scheme $\Gamma\lto\Gamma V^{-1}$}
(Section~\ref{sec:GV}):

A 1-1 correspondence
between an \ibp\!/\eeubp $(\fL,\Gamma)$ for
$T^+$ and an \ibp\!/\eeubp $(\fL,\Gamma^\prime)$ for
the Shmul'yan transform $V(T^+)$ of $T^+$, with
a suitable unitary relation $V$,
is established
in Theorem~\ref{thm:IUBP} and is a generalization of
\cite[Proposition~3.3]{Behrndt11}.
We then restrict ourselves to a
standard unitary operator of the
matrix form
$V=\bigl(\begin{smallmatrix}
A & B \\ C & D \end{smallmatrix}\bigr)$ and
a densely defined (closed, symmetric)
operator $T$. Due to the rich inner structure of
$V$, in case of the \obt\!\!'s
we are able to derive the formula for
$M_{\Gamma^\prime}-M_\Gamma$ similar to that
in \eqref{eq:fTex}; see Theorem~\ref{thm:IUBP3}.
The formula is convenient in answering
the question, similar to the classical one in
\cite{Langer77} but now in a Krein space setting.
Namely, under what circumstances there is the equality
$M_{\Gamma^\prime}=M_\Gamma$ ?
(Corollaries~\ref{cor:Delta0}, \ref{cor:Delta0b}).
We point out that in the present transformation scheme
the \obt\!\!'s
$\Pi_\Gamma$ and $\Pi_{\Gamma^\prime}$
are associated with generally different
operators, $T$ and its linear fractional
transformation $T^\prime=\phi_V(T)$
(Section~\ref{sec:LFT}), respectively,
but in the special
case it is possible to achieve that
$T^\prime=T$, as in the model for a
scaled \obt\!\!, in Section~\ref{exam:iota}.

\textit{Transformation scheme $\Gamma\lto V\Gamma$}
(Section~\ref{sec:VG}):

$\circ$
An \ibp $(\cH,\Gamma^\prime)$ for $T^+$ is constructed
from an \ibp ($\fL,\Gamma$) for $T^+$
with the help of an isometric relation $V$
(Theorem~\ref{thm:IBP-0}).

$\circ$
Relying partially on the previous result,
a 1-1 correspondence
between an \ibp\!/\eeubp $(\fL,\Gamma)$ for
$T^+$ and an \ibp\!/\eeubp $(\cH,\Gamma^\prime)$ for
$T^+$ is established
in Theorem~\ref{thm:IUBP2xxcor}
with the help of a suitable unitary relation $V$.

$\circ$
If $(\fL,\Gamma)$ is a \ubp for $T^+$
and a unitary relation $V$ satisfies
$\dom V\subseteq\ran\Gamma$,
in Theorem~\ref{thm:GunTp} we show that
$(\cH,\Gamma^\prime)$ is an \ibp for
the adjoint $T^{\prime\,+}$ of
a closed symmetric relation
$T^\prime=\Gamma^{-1}(\mul V^+)$.
It is seen that $T^\prime\supseteq T$,
with the equality if \eg $\dom V$, and then
$\ran\Gamma$, is dense.

$\circ$
From the previous theorem, by letting
a \ubp $(\fL,\Gamma)$ reduce to an \obt
$\Pi_\Gamma$ for $T^+$, it is possible to
achieve, in case $V$ is a suitable operator,
that the transformed \ibp $(\cH,\Gamma^\prime)$,
now the \ibt $\Pi_{\Gamma^\prime}$
for $T^{\prime\,+}$, has the properties
that $T^\prime_0:=\ker\Gamma^\prime_0=T_0$
is a self-adjoint relation and
$\Gamma^\prime$ has dense range; for $J=I$,
this is nothing else but a \qbt for $T^{\prime\,*}$
(Proposition~\ref{prop:VVV}). With the help
of an additional Lemma~\ref{lem:VVV}, for which
$T^\prime=T$, we are allowed to replace
an \obt $\Pi_\Gamma$ by an
\ibt $\Pi_\Gamma$ for $T^+$ such that $T_0=T^+_0$
and $\ker\Gamma=T$.
As an application,
the construction of a \qbt $\Pi_{\Gamma^\prime}$ for
$T^*$ from an \ibt $\Pi_\Gamma$
for $T^*$ is shown in Theorem~\ref{thm:VVV}.
An injective isometric operator $V$
is taken from \cite[Section~3.4]{Derkach17}.
Thus it is possible to construct a \qbt from \eg
an $AB$-generalized \bt\!\!, and then from its
restricted versions ($S$-, $B$-generalized, etc. \bt\!\!'s).
In \cite[Theorems~4.4, 4.11]{Derkach17},
an $AB$-generalized \bp
is constructed from a $B$-generalized \bp (and vice verse)
and a $B$-generalized \bp is constructed from an \obt
%%%%%%%%%%%%%%%%%%%%%%%%%%%%%%%%%%%%%%%%%%%%%%%%%%%%%%%%%%%%%%
%%%%%%%%%%%%%%%%%%%%%%%%%%%%%%%%%%%%%%%%%%%%%%%%%%%%%%%%%%%%%%
%%%%%%%%%%%%%%%%%%%%%%%%%%%%%%%%%%%%%%%%%%%%%%%%%%%%%%%%%%%%%%
%%%%%%%%%%%%%%%%%%%%%%%%%%%%%%%%%%%%%%%%%%%%%%%%%%%%%%%%%%%%%%
\subsection*{Notation}
%%%%%%%%%%%%%%%%%%%%%%%%%%%%%%%%%%%%%%%%%%%%%%%%%%%%%%%%%%%%%%
%%%%%%%%%%%%%%%%%%%%%%%%%%%%%%%%%%%%%%%%%%%%%%%%%%%%%%%%%%%%%%
%%%%%%%%%%%%%%%%%%%%%%%%%%%%%%%%%%%%%%%%%%%%%%%%%%%%%%%%%%%%%%
%%%%%%%%%%%%%%%%%%%%%%%%%%%%%%%%%%%%%%%%%%%%%%%%%%%%%%%%%%%%%%
The sets of relations, closed relations,
and closed and symmetric relations in a Krein
space $\fH$ are denoted by
$\scrC(\fH)$, $\wtscrC(\fH)$, and $\wtscrC_s(\fH)$,
respectively; if $\fH$ is a Hilbert space then
$T\in\wtscrC_s(\fH)$ is assumed symmetric with respect
to the Hilbert space metric.
If $T\in\wtscrC_s(\fH)$ and $\wtT\in\scrC(\fH)$
and $T\subseteq\wtT\subseteq T^+$
then we write $\wtT\in\mrm{Ext}_T$
(proper extension).

Other notations are rather standard.
Thus,
the set of bounded (continuous) everywhere
defined on $\fH$ operators from a Hilbert space
$\fH$ to a Hilbert space $\cH$ is abbreviated
as $[\fH,\cH]$; $[\fH,\fH]\equiv[\fH]$.
The domain, the range, the kernel,
the multivalued part of $T\in\scrC(\fH)$ are denoted
by $\dom T$, $\ran T$, $\ker T$, $\mul T$.

The symbols $\sigma(T)$,
$\sigma_p(T)$, $\res T$, $\reg T$
stand for the spectrum, the point spectrum,
the resolvent set (the set of regular points),
the regularity domain (the set of points of regular type)
of $T$. The eigenspace
$\fN_z(T):=\ker(T-zI)$, $z\in\bbC$.
We write $\sigma^0_p(T)$ for the intersection
$\bbC_*\mcap\sigma_p(T)$, where
$\bbC_*:=\bbC\setm\bbR=\bbC_+\mcup\bbC_-$,
$\bbC_\pm:=\{z\in\bbC\vrt\pm\Im z>0 \}$.

Let $\fH$ be a linear set and
$T$, $R\in\scrC(\fH)$.
The \textit{operatorwise sum} $T+R$ consists of
$(f,f^\prime+f^{\prime\prime})$
such that $(f,f^\prime)\in T$ and $(f,f^{\prime\prime})\in R$.
The \textit{componentwise sum} $T\hsum R$ consists
of $(f+g,f^\prime+g^\prime)$ such that
$(f,f^\prime)\in T$ and $(g,g^\prime)\in R$;
if $\fH$ is a Hilbert space and in addition
$\braket{f,g}+\braket{f^\prime,g^\prime}=0$
then $T\hsum R$ is written as $T\hop R$.
The definition of the sum extends
naturally to relations from
a linear set $\fH$ to a linear set $\cH$.
%%%%%%%%%%%%%%%%%%%%%%%%%%%%%%%%%%%%%%%%%%%%%%%%%%%%%%%%%%%%%%
%%%%%%%%%%%%%%%%%%%%%%%%%%%%%%%%%%%%%%%%%%%%%%%%%%%%%%%%%%%%%%
%%%%%%%%%%%%%%%%%%%%%%%%%%%%%%%%%%%%%%%%%%%%%%%%%%%%%%%%%%%%%%
%%%%%%%%%%%%%%%%%%%%%%%%%%%%%%%%%%%%%%%%%%%%%%%%%%%%%%%%%%%%%%
\section{Some known lemmas}\label{sec:knl}
%%%%%%%%%%%%%%%%%%%%%%%%%%%%%%%%%%%%%%%%%%%%%%%%%%%%%%%%%%%%%%
%%%%%%%%%%%%%%%%%%%%%%%%%%%%%%%%%%%%%%%%%%%%%%%%%%%%%%%%%%%%%%
%%%%%%%%%%%%%%%%%%%%%%%%%%%%%%%%%%%%%%%%%%%%%%%%%%%%%%%%%%%%%%
%%%%%%%%%%%%%%%%%%%%%%%%%%%%%%%%%%%%%%%%%%%%%%%%%%%%%%%%%%%%%%
Here we restate some known composition properties of
relations.
\begin{lem}{\cite[Lemma~2.9]{Derkach09}}\label{lem:Derk}
Let $\fK_i$, $i\in\{0,1,2,3\}$, be Krein spaces,
let $R$ be a closed relation $\fK_1\lto\fK_2$,
$X$ a relation $\fK_0\lto\fK_1$,
$Y$ a relation $\fK_2\lto\fK_3$.
\begin{itemize}
\item[$\circ$]
If $\dom R$ is closed and $\ran X\subseteq\dom R$
then $(RX)^+=X^+R^+$.
\item[$\circ$]
If $\ran R$ is closed and $\dom Y\subseteq\ran R$
then $(YR)^+=R^+Y^+$.
\end{itemize}
\end{lem}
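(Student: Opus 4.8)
The plan is to reduce the two identities to a single Hilbert-space statement and then to exploit the canonical decomposition of a closed relation with closed domain. First I would observe that the second bullet is the image of the first under inversion: since $(YR)^{-1}=R^{-1}Y^{-1}$ and the Krein adjoint commutes with inversion, $(A^{-1})^+=(A^+)^{-1}$, applying the first bullet to the closed relation $R^{-1}$ (whose domain $\ran R$ is closed) and to $Y^{-1}$ (whose range $\dom Y$ lies in $\ran R=\dom R^{-1}$) yields $(R^{-1}Y^{-1})^+=(Y^{-1})^+(R^{-1})^+$, which upon inverting both sides is precisely $(YR)^+=R^+Y^+$. Hence it suffices to treat the first bullet.

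Next I would strip off the indefinite metric. Writing $J_i$ for the fundamental symmetry of $\fK_i$ and recalling that the Krein adjoint of a relation $A\co\fK_i\lto\fK_j$ equals $J_iA^*J_j$, a short computation using $J_1^2=I$ gives $(RX)^+=J_0(RX)^*J_2$ and $X^+R^+=J_0X^*R^*J_2$; as the $J_i$ are bounded and boundedly invertible they cancel, so the claim is equivalent to the Hilbert-space identity $(RX)^*=X^*R^*$ under the same hypotheses (closedness of $\dom R$ and the inclusion $\ran X\subseteq\dom R$ are insensitive to the choice of inner product). The inclusion $X^*R^*\subseteq(RX)^*$ is automatic: if $(k,m)\in R^*$, $(m,k')\in X^*$ and $(f,h)\in RX$ with $(f,g)\in X$, $(g,h)\in R$, then $\braket{h,k}=\braket{g,m}=\braket{f,k'}$.

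For the reverse inclusion I would use that a closed relation $R$ with closed domain $D:=\dom R$ splits as $R=\{(g,Bg+w)\vrt g\in D,\,w\in\mul R\}$, where the operator part $B\co D\lto(\mul R)^\perp$ is closed and, being everywhere defined on the Hilbert space $D$, is \emph{bounded} by the closed graph theorem; a direct computation then gives $R^*=\{(k,B^*k+v)\vrt k\in(\mul R)^\perp,\,v\in D^\perp\}$. Since $\ran X\subseteq D$, one has $RX=\{(f,Bg+w)\vrt(f,g)\in X,\,w\in\mul R\}$. Given $(k,k')\in(RX)^*$, letting $w$ range over $\mul R$ forces $k\in(\mul R)^\perp$, so $m:=B^*k$ satisfies $(k,m)\in R^*$; the residual identity $\braket{Bg,k}=\braket{g,m}=\braket{f,k'}$, valid for all $(f,g)\in X$, then says $(m,k')\in X^*$, whence $(k,k')\in X^*R^*$.

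I expect the reverse inclusion to be the only real difficulty, and within it the decisive point to be the boundedness of the operator part $B$: it is exactly the closed-graph argument---available because $\dom R$ is closed---that guarantees $B^*k$ is defined for every $k$, allowing the single functional $g\mapsto\braket{Bg,k}$ to be split through the intermediate vector $m=B^*k$. Without closedness of $\dom R$ this splitting, and hence the equality, can fail.
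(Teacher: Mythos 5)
Your proof is correct; note that the paper offers no proof of its own for this statement --- the lemma is restated from \cite[Lemma~2.9]{Derkach09} in a section explicitly devoted to known results --- so the only comparison available is with the cited source, and your argument stands as a complete, self-contained alternative. The two reductions are clean and valid: the second bullet follows from the first via $(YR)^{-1}=R^{-1}Y^{-1}$ together with $(A^+)^{-1}=(A^{-1})^+$, and stripping the fundamental symmetries (using $A^+=J_iA^*J_j$ and $J_1^2=I$, with the $J$'s bounded and boundedly invertible) turns the claim into the Hilbert-space identity $(RX)^*=X^*R^*$, the hypotheses being metric-independent. The heart of the matter, the reverse inclusion, is also handled correctly: for a closed relation $R$ with closed domain $D$ one has $B=R\mcap(\fK_1\times(\mul R)^\bot)$ for the operator part, hence $B$ is closed, and the closed graph theorem on the Hilbert space $D$ yields boundedness --- this is exactly where the hypothesis that $\dom R$ be closed is spent, as you correctly diagnose. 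One point deserves an explicit word: when $D$ is not dense in $\fK_1$, the adjoint of $B$ viewed as a relation from $\fK_2$ to $\fK_1$ has multivalued part $D^\bot$, so your $m:=B^*k$ should be declared as the unique vector in $D$ representing the bounded functional $g\mapsto\braket{Bg,k}$ on $D$; with that convention your formula $R^*=\{(k,B^*k+v)\vrt k\in(\mul R)^\bot,\,v\in D^\bot\}$ is exactly right, and the final splitting $\braket{Bg,k}=\braket{g,m}=\braket{f,k'}$ gives $(k,m)\in R^*$ and $(m,k')\in X^*$, hence $(k,k')\in X^*R^*$ as claimed. (Also, your ``letting $w$ range over $\mul R$'' to force $k\in(\mul R)^\bot$ amounts to taking $(f,g)=(0,0)\in X$, which every linear relation contains, so that step is sound as well.)
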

The next lemma is a special case of \cite[Corollary~1]{Tarcsay20}.
\begin{lem}{\cite[Proposition~2.1]{Popovici13},
\cite[Corollary~2]{Tarcsay20}}\label{lem:Pop}
Let $\fH$, $\cH$ be Hilbert spaces,
$L\subseteq R$ relations $\fH\lto\cH$.
Equivalent are:
\begin{itemize}
\item[$\circ$]
$L=R$.
\item[$\circ$]
$\dom L=\dom R$ and $\mul L=\mul R$.
\item[$\circ$]
$\ran L=\ran R$ and $\ker L=\ker R$.
\end{itemize}
\end{lem}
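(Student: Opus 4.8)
The plan is to observe that the first condition trivially implies each of the other two, so the content of the lemma lies entirely in the two reverse implications; moreover the third statement is the inverse-relation counterpart of the second, so it will suffice to establish one of them by a direct argument and deduce the other by passing to inverses.

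First I would show that $\dom L=\dom R$ together with $\mul L=\mul R$ forces $L=R$. Since $L\subseteq R$ is assumed, only the inclusion $R\subseteq L$ requires proof. I take an arbitrary $(f,f^\prime)\in R$. Because $f\in\dom R=\dom L$, there is some $g^\prime$ with $(f,g^\prime)\in L\subseteq R$. The linearity of $R$ then yields $(0,f^\prime-g^\prime)=(f,f^\prime)-(f,g^\prime)\in R$, so $f^\prime-g^\prime\in\mul R=\mul L$, whence $(0,f^\prime-g^\prime)\in L$. Adding this to $(f,g^\prime)\in L$ and using the linearity of $L$ gives $(f,f^\prime)\in L$. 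Thus $R\subseteq L$, and the two relations coincide.

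For the third condition I would not repeat the element-chase but instead apply the implication just proved to the inverse relations $L^{-1}\subseteq R^{-1}$, using the elementary identities $\dom L^{-1}=\ran L$ and $\mul L^{-1}=\ker L$ (and the same for $R$). Under the hypotheses $\ran L=\ran R$ and $\ker L=\ker R$ these read $\dom L^{-1}=\dom R^{-1}$ and $\mul L^{-1}=\mul R^{-1}$, so the second implication gives $L^{-1}=R^{-1}$, and inverting once more returns $L=R$.

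The point worth stressing is that nothing beyond the linear structure of the relations enters: neither closedness of $L$, $R$ nor the inner products of $\fH$, $\cH$ is used, so the kernels and multivalued parts may be treated as plain linear subspaces. Consequently the main (and essentially only) step is the single element-chase above, and the apparent asymmetry among the three conditions dissolves once one records the domain/range and kernel/multivalued-part interchange effected by inversion. No genuine obstacle arises; the mild subtlety is merely to resist proving the third equivalence from scratch and to exploit the duality instead.
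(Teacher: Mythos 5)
Your proof is correct: the element-chase for the second condition is sound, and the reduction of the third condition to the second via $L^{-1}\subseteq R^{-1}$ with $\dom L^{-1}=\ran L$, $\mul L^{-1}=\ker L$ is exactly the right duality. The paper itself gives no proof of this lemma---it is quoted as known, with citations to \cite{Popovici13} and \cite{Tarcsay20}---and your argument is essentially the standard one found in those sources, so there is nothing to flag.
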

Let $\fH$, $\cH$ be Krein spaces,
$V$, $W$ relations from a $\whJ_\fH$-space to a
$\whJ_\cH$-space (abbreviated as $\whJ_\fH\lto
\whJ_\cH$).
Variants of the next lemma are known
(\eg \cite[Lemma~2.10]{Hassi09}).
\begin{lem}\label{lem:VWorth}
The following three componentwise sums are
closed (or not) only simultaneously:
$\ol{V}\hsum\ol{W}$, $V^\bot\hsum W^\bot$,
$V^+\hsum W^+$.
\end{lem}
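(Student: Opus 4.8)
The plan is to view $V$, $W$ and all the relations derived from them as subspaces of the Hilbert space $\fH^2\times\cH^2$ (respectively $\cH^2\times\fH^2$ for the adjoints), and to reduce the closedness of each of the three componentwise sums to that of the single intermediate object $V^\bot\hsum W^\bot$. First I would observe that in each of the three displayed sums both summands are automatically closed: $\ol{V}$, $\ol{W}$ are closed by construction, the orthogonal complements $V^\bot$, $W^\bot$ are closed as orthogonal complements always are, and the Krein adjoints $V^+$, $W^+$ are closed since any adjoint is. Hence each sum is a sum of two closed subspaces, and the classical dichotomy for such sums becomes available.

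For the equivalence between $\ol{V}\hsum\ol{W}$ and $V^\bot\hsum W^\bot$ I would invoke the classical fact that for closed subspaces $M$, $N$ of a Hilbert space the sum $M+N$ is closed if and only if $M^\bot+N^\bot$ is closed (a variant of \cite[Lemma~2.10]{Hassi09}). Applying this with $M=\ol{V}$ and $N=\ol{W}$, using $(\ol{V})^\bot=V^\bot$ and $(\ol{W})^\bot=W^\bot$ and the identification of the componentwise sum with the Minkowski sum of subspaces, yields that $\ol{V}\hsum\ol{W}$ is closed exactly when $V^\bot\hsum W^\bot$ is.

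For the remaining equivalence I would record the structural identity relating the Krein adjoint to the orthogonal complement. The Hilbert-space adjoint satisfies $V^*=\Psi(V^\bot)$, where $\Psi\co\fH^2\times\cH^2\lto\cH^2\times\fH^2$ is the fixed unitary flip $(x,y)\mapsto(-y,x)$; composing with the block fundamental symmetries gives $V^+=\whJ_\fH V^*\whJ_\cH=\Phi(V^\bot)$, where $\Phi:=\mathrm{diag}(\whJ_\cH,\whJ_\fH)\,\Psi$ is a single unitary $\fH^2\times\cH^2\lto\cH^2\times\fH^2$ that does \emph{not} depend on the relation. Since $\Phi$ is linear, $V^+\hsum W^+=\Phi(V^\bot)\hsum\Phi(W^\bot)=\Phi(V^\bot\hsum W^\bot)$, and because $\Phi$ is a homeomorphism this sum is closed precisely when $V^\bot\hsum W^\bot$ is. Chaining the two equivalences shows the three sums to be closed only simultaneously.

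The routine but error-prone point—the one thing that must be got right—is the bookkeeping in the identity $V^+=\Phi(V^\bot)$: one has to track the flip $\Psi$ together with the two block fundamental symmetries $\whJ_\fH$, $\whJ_\cH$ and confirm that the resulting $\Phi$ is genuinely unitary and independent of $V$ and $W$, so that it distributes over $\hsum$. The only other subtlety is to state the closed-sum/orthogonal-complement duality in exactly the form required, which is where \cite[Lemma~2.10]{Hassi09} is used.
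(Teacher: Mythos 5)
Your proof is correct and is essentially the paper's own route: the paper reduces the lemma to the classical duality between closedness of $\ol{V}\hsum\ol{W}$ and of $V^\bot\hsum W^\bot$ (the variant of \cite[Lemma~2.10]{Hassi09} it cites) together with the identity $V^\bot=-\whJ_\cH V_\#\whJ_\fH$, which is precisely your $V^+=\Phi(V^\bot)$ read through the inverse/flip map, both being fixed unitaries that distribute over $\hsum$ and preserve closedness. No gaps.
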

We use that the orthogonal complement
in $\fH^2\times\cH^2$ of $V$
satisfies the equality
$V^\bot=-\whJ_\cH V_\#\whJ_\fH$, where as previously
(and in what follows)
$V_\#:=(V^+)^{-1}$, and the Krein space
adjoint $V^+=\whJ_\fH V^*\whJ_\cH$.

Recall (\eg \cite[Lemma~2.6]{Hassi09}, \cite[Section~1.7]{Azizov89})
that
\[
(\ol{V}\mcap\ol{W})^+=\ol{V^+\hsum W^+}\,.
\]
It is essential that we use the closures on the
left hand side.
%%%%%%%%%%%%%%%%%%%%%%%%%%%%%%%%%%%%%%%%%%%%%%%%%%%%%%%%%%%%%%
%%%%%%%%%%%%%%%%%%%%%%%%%%%%%%%%%%%%%%%%%%%%%%%%%%%%%%%%%%%%%%
%%%%%%%%%%%%%%%%%%%%%%%%%%%%%%%%%%%%%%%%%%%%%%%%%%%%%%%%%%%%%%
%%%%%%%%%%%%%%%%%%%%%%%%%%%%%%%%%%%%%%%%%%%%%%%%%%%%%%%%%%%%%%
\section{Shmul'yan transform}\label{sec:prelim2}
%%%%%%%%%%%%%%%%%%%%%%%%%%%%%%%%%%%%%%%%%%%%%%%%%%%%%%%%%%%%%%
%%%%%%%%%%%%%%%%%%%%%%%%%%%%%%%%%%%%%%%%%%%%%%%%%%%%%%%%%%%%%%
%%%%%%%%%%%%%%%%%%%%%%%%%%%%%%%%%%%%%%%%%%%%%%%%%%%%%%%%%%%%%%
%%%%%%%%%%%%%%%%%%%%%%%%%%%%%%%%%%%%%%%%%%%%%%%%%%%%%%%%%%%%%%
Let $\fH$, $\cH$ be Krein spaces,
let $T$ be a relation in $\fH$, and let $V$
be a relation from a $\whJ_\fH$-space to a
$\whJ_\cH$-space.
The relation
$V(T)$ in $\cH$ is the linear set of
$\whh\in\cH^2$ such that $(\exists\whf\in T)$
$(\whf,\whh)\in V$.
Following \cite[Definition~3.9]{Derkach09}
$V(T)$ is called the \textit{Shmul'yan transform} of $T$
induced by $V$. Let $V\vrt_T$ be the domain
restriction to $T$ of $V$, \ie
\[
V\vrt_T:=V\mcap(T\times\cH^2)\,.
\]
The inclusion $T\subseteq\dom V$
need not hold, but clearly
$V\vrt_T=V\vrt_{T\mcap\dom V}$.
With the above notation
\[
V(T)=\ran(V\vrt_T)\,.
\]

For suitable $T$, $V$,
the inverse of the Shmul'yan transform
is the Shmul'yan transform. To see this,
let $T^\prime:=V(T)$. By using
\[
V^{-1}V=I_{\dom V}\hsum(\{0\}\times\ker V)
\]
we get that
\[
V^{-1}(T^\prime)=(T\mcap\dom V)\hsum\ker V\,.
\]
Therefore, if
\begin{equation}
\ker V\subseteq T\subseteq\dom V
\label{eq:kerVTdomV}
\end{equation}
then $T=V^{-1}(T^\prime)$ is the Shmul'yan
transform of $T^\prime$ induced by $V^{-1}$.

We characterize the adjoint
$V(T)^+$ based on a general hypothesis that there is
the equality in
\begin{equation}
\ol{V\vrt_T}\subseteq\ol{V}\vrt_{\ol{T}}\,.
\label{eq:ineVT}
\end{equation}
Necessary and sufficient conditions for the
equality in \eqref{eq:ineVT} to hold
are due to Lemma~\ref{lem:Pop};
note from Lemma~\ref{lem:Pop},
if there is the equality in \eqref{eq:ineVT} then
\[
\ol{V(T)}=\ol{\ol{V}(\ol{T})}\,.
\]
In connection with related results in
\cite{Wietsma12} we give two other criteria
in Lemmas~\ref{lem:Wie}, \ref{lem:clVTan0001}.

The next Lemma~\ref{lem:Torth} generalizes \eg
\cite[Proposition~3.10(i)]{Derkach09},
\cf \cite[Lemma~3.8]{Wietsma12}, and it relies on
the following hypotheses:
\begin{enumerate}[label=\text{(V.\arabic*)},ref=(V.\arabic*)]
\item\label{item:V}
$\ol{T}\hsum\dom \ol{V}\in\wtscrC(\fH)$.
\item\label{item:Vp}
$T^+\hsum\dom V_\#\in\wtscrC(\fH)$.
\end{enumerate}
Equivalent representations of \ref{item:V} and
\ref{item:Vp} are as follows.
Let
\[
U:=V_\#\hsum(T^+\times\{0\})
\]
then (by Lemma~\ref{lem:VWorth})
\begin{itemize}
\item[$\circ$]
\ref{item:V} iff $U=\ol{U}$.
\item[$\circ$]
\ref{item:Vp} iff $\dom U=\ol{\dom}U$.
\end{itemize}
Moreover
\begin{itemize}
\item[$\circ$]
If $\dom \ol{V}\in\wtscrC(\fH)$ then
\ref{item:V} iff
$T^+\hsum\ker V_\#\in\wtscrC(\fH)$.
\item[$\circ$]
If $\dom V_\#\in\wtscrC(\fH)$ then
\ref{item:Vp} iff
$\ol{T}\hsum\ker \ol{V}\in\wtscrC(\fH)$.
\end{itemize}
The connection between
\ref{item:V} and \ref{item:Vp} is as follows.
\begin{itemize}
\item[$\circ$]
If \ref{item:V} then
\ref{item:Vp} iff $\ol{V}(\ol{T})\in\wtscrC(\cH)$.
\item[$\circ$]
If \ref{item:Vp} then
\ref{item:V} iff $V_\#(T^+)\in\wtscrC(\cH)$.
\end{itemize}
For the first statement,
let $U_1:=\cH^2\times\dom U$,
then $U_1=\ol{U_1}$ iff \ref{item:Vp}.
Since \ref{item:V} holds, $U=\ol{U}$, so
$U_1=\ol{U_1}$ iff $\dom U^+=\ol{V}(\ol{T})$ is closed.
The second statement is the inverse variant of
the first one.

In the above one of the arguments uses that
$\dom \ol{V}$ is closed iff so is $\dom V^+$.
By replacing $V$ by $V^{-1}$, $\ran \ol{V}$
is closed iff so is $\ran V^+$;
\eg \cite[Proposition~2.1]{Sorjonen79},
\cite[Lemma~2.1]{Hassi20}.
\begin{lem}\label{lem:Torth}
Let $T$, $V$ be such that
there is the equality in \eqref{eq:ineVT}.
\begin{enumerate}[label=\arabic*$^\circ$,
ref=\arabic*$^\circ$]
\item\label{item:Torth-1o}
If \ref{item:V} then
\[
V(T)^+=V_\#(T^+)\,.
\]
\item\label{item:Torth-2o}
If \ref{item:V} and \ref{item:Vp}
then $\ol{V}(\ol{T})\in\wtscrC(\cH)$.
\end{enumerate}
\end{lem}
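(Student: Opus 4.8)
The plan is to reduce both parts to a single computation of the adjoint $(V\vrt_T)^+$, since $V(T)=\ran(V\vrt_T)$ by definition. First I record the identity
\[
V(T)^+=\ker\bigl((V\vrt_T)^+\bigr)\,.
\]
Indeed, unwinding the definition of the Krein adjoint in $\cH$, the relation $V(T)^+$ consists of the elements of $\cH^2$ that annihilate $V(T)=\ran(V\vrt_T)$ in the $\whJ_\cH$-form; and for any relation $S$ from the $\whJ_\fH$-space to the $\whJ_\cH$-space this $\whJ_\cH$-orthocomplement of $\ran S$ is exactly $\ker S^+$. Thus everything hinges on pinning down $(V\vrt_T)^+$.

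To compute it, set $W:=T\times\cH^2$, so that $\ol W=\ol T\times\cH^2$ and a direct check gives $W^+=\{0\}\times T^+$. Since the adjoint is insensitive to closures, the equality in \eqref{eq:ineVT} lets me write $\ol{V\vrt_T}=\ol V\mcap(\ol T\times\cH^2)=\ol V\mcap\ol W$, whence the recalled formula $(\ol V\mcap\ol W)^+=\ol{V^+\hsum W^+}$ yields
\[
(V\vrt_T)^+=\ol{V^+\hsum(\{0\}\times T^+)}\,.
\]
The role of hypothesis $(V)$ is to remove this closure: computing the componentwise sum shows $\ol V\hsum\ol W=(\dom\ol V+\ol T)\times\cH^2$, so $\ol V\hsum\ol W$ is closed precisely when $\ol T\hsum\dom\ol V$ is, \ie under $(V)$; Lemma~\ref{lem:VWorth} then transfers this closedness to $V^+\hsum(\{0\}\times T^+)=V^+\hsum W^+$. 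Hence, under $(V)$, one has $(V\vrt_T)^+=V^+\hsum(\{0\}\times T^+)$, and a one-line kernel computation gives $\ker\bigl(V^+\hsum(\{0\}\times T^+)\bigr)=\{\whh:(\whh,\whf)\in V^+\text{ for some }\whf\in T^+\}$, which is $V_\#(T^+)$ because $V_\#=(V^+)^{-1}$. Combined with the identity above this proves \ref{item:Torth-1o}.

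For \ref{item:Torth-2o} the target is that $\ol V(\ol T)=\ran(\ol{V\vrt_T})$ is closed; put $S:=\ol{V\vrt_T}$, a closed relation. A short application of Lemma~\ref{lem:VWorth}, this time to $S$ and the zero relation $\fH^2\times\{0\}$, gives the closed-range dichotomy: since $S\hsum(\fH^2\times\{0\})=\fH^2\times\ran S$ and $S^+\hsum(\cH^2\times\{0\})=\cH^2\times\ran S^+$ (the zeros taken in the appropriate factors), the range $\ran S$ is closed iff $\ran S^+$ is closed. Under $(V)$ we already know from the computation in \ref{item:Torth-1o} that $S^+=V^+\hsum(\{0\}\times T^+)$, so $\ran S^+=\ran V^++T^+=T^+\hsum\ran V^+$, which is closed exactly under $(V^\prime)$. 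Therefore $\ran S=\ol V(\ol T)$ is closed.

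The main obstacle is the closure bookkeeping in \ref{item:Torth-1o}: the intersection--adjoint formula delivers $(V\vrt_T)^+$ only up to a closure, and the entire point of $(V)$ is to strip it away. Making this rigorous rests on the three small identifications $W^+=\{0\}\times T^+$, $\ol V\hsum\ol W=(\dom\ol V+\ol T)\times\cH^2$ (so that its closedness is literally $(V)$), and the Lemma~\ref{lem:VWorth} transfer to $V^+\hsum W^+$. Once $(V\vrt_T)^+$ is identified with the genuine (unclosed) componentwise sum, \ref{item:Torth-1o} is immediate, and \ref{item:Torth-2o} needs only the closed-range dichotomy (itself another instance of Lemma~\ref{lem:VWorth}) together with the reading of $(V^\prime)$ as closedness of $\ran S^+$.
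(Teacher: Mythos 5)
Your proof is correct, and part \ref{item:Torth-1o} is essentially the paper's own argument written out in full: the chain $V(T)^+=\ker(V\vrt_T)^+$, then $\ol{V\vrt_T}=\ol{V}\vrt_{\ol{T}}$ together with $(\ol{V}\mcap\ol{W})^+=\ol{V^+\hsum W^+}$ for $W=\ol{T}\times\cH^2$, and finally $(V)$ plus Lemma~\ref{lem:VWorth} to strip the closure from $V^+\hsum(\{0\}\times T^+)$, is exactly what the paper does, and your identifications $W^+=\{0\}\times T^+$ and $\ol{V}\hsum\ol{W}=(\dom\ol{V}+\ol{T})\times\cH^2$ are the right bookkeeping. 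For part \ref{item:Torth-2o} you take a genuinely different route: the paper applies \ref{item:Torth-1o} to the dual pair $(T^+,V_\#)$ --- the restriction $V_\#\vrt_{T^+}$ is closed, so the equality in \eqref{eq:ineVT} is automatic there, and $(V^\prime)$ is precisely hypothesis $(V)$ for that pair --- which yields $V_\#(T^+)^+=\ol{V}(\ol{T})$ and hence closedness for free, since adjoints are closed. You instead stay with the original pair, use $(V)$ to pin down $S^+=V^+\hsum(\{0\}\times T^+)$ for $S=\ol{V\vrt_T}$, and invoke the closed-range transfer ($\ran S$ closed iff $\ran S^+$ closed for closed $S$), which you rederive correctly from Lemma~\ref{lem:VWorth} with the auxiliary relations $\fH^2\times\{0\}$ and $\cH^2\times\{0\}$; this is the same fact the paper records separately with references to Sorjonen and Hassi--Labrousse--de Snoo. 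The trade-off: the paper's duality trick additionally produces the identity $V_\#(T^+)^+=\ol{V}(\ol{T})$, exhibiting $\ol{V}(\ol{T})$ as an adjoint (a formula of independent use), while your argument makes fully transparent where each hypothesis enters --- $(V)$ to remove the closure from $S^+$, $(V^\prime)$ read literally as closedness of $\ran S^+=T^+\hsum\ran V^+$ --- at the cost of re-proving the range dichotomy. Both are complete; there is no gap.
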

\begin{proof}
\ref{item:Torth-1o}
Using $\ol{V\vrt_T}=\ol{V}\vrt_{\ol{T}}$,
$V(T)^+=\ker(V\vrt_T)^+$,
\ref{item:V}, Lemma~\ref{lem:VWorth} gives
\[
V(T)^+=\ker(V^+\hsum(\{0\}\times T^+))=
V_\#(T^+)\,.
\]
\ref{item:Torth-2o}
Since $V_\#\vrt_{T^+}$ is closed, an application
of \ref{item:Torth-1o} gives
$V_\#(T^+)^+=\ol{V}(\ol{T})$.
\end{proof}
The next two lemmas deal with the equality in \eqref{eq:ineVT}.
\begin{lem}\label{lem:Wie}
Let $V$ be closed.
Then $\ol{V\vrt_T}=V\vrt_{\ol{T\mcap\dom V}}$.
\end{lem}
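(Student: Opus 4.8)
The plan is to prove the two inclusions separately, reducing the equality to a single domain comparison via Lemma~\ref{lem:Pop} and leaning on the closedness of $V$ at the decisive point. Throughout I use the already noted identity $V\vrt_T=V\vrt_{T\mcap\dom V}$, and I write $S:=T\mcap\dom V$, a linear subset of $\dom V$.

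First I would dispose of the inclusion $\ol{V\vrt_T}\subseteq V\vrt_{\ol{S}}$, which is immediate. Since $V$ is closed and $\ol{S}\times\cH^2$ is closed, the relation $V\vrt_{\ol{S}}=V\mcap(\ol{S}\times\cH^2)$ is closed as an intersection of closed sets; as it plainly contains $V\vrt_T=V\vrt_S$, it contains the closure $\ol{V\vrt_T}$ as well. Thus the ordering $\ol{V\vrt_T}\subseteq V\vrt_{\ol{S}}$ is free, and the content of the lemma lies in the reverse inclusion.

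For the reverse inclusion I would invoke Lemma~\ref{lem:Pop}. Having the ordering $\ol{V\vrt_T}\subseteq V\vrt_{\ol{S}}$ in hand, equality follows once the two relations are shown to share the same domain and the same multivalued part. The multivalued parts match for free: since $0\in S\subseteq\ol{S}$, one has $\mul V\vrt_{\ol{S}}=\{\whh\co(0,\whh)\in V\}=\mul V$, and likewise $\mul V\vrt_T=\mul V$, whence $\mul V=\mul V\vrt_T\subseteq\mul\ol{V\vrt_T}\subseteq\mul V\vrt_{\ol{S}}=\mul V$; so all three coincide. The entire problem therefore collapses to the single equality of domains $\dom\ol{V\vrt_T}=\dom V\mcap\ol{S}$, that is, to showing that every $\whf\in\dom V\mcap\ol{S}$ is the first component of some element of $\ol{V\vrt_T}$.

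I expect this domain-matching step to be the main obstacle, and it is exactly here that the closedness of $V$ must be used. Concretely, given $\whf\in\dom V\mcap\ol{S}$ one can fix $\whf\in\dom V$ with $(\whf,\whh)\in V$ for some $\whh$, and choose $\whf_n\in S$ with $\whf_n\to\whf$; the difficulty is to lift this to a full approximation $(\whf_n,\whh_n)\to(\whf,\whh)$ with $(\whf_n,\whh_n)\in V$, rather than merely $\whf_n\to\whf$ in the first coordinate. My intended route is to pass to the operator part of $V$, i.e.\ the single-valued relation induced by $V$ on $\dom V$ with values in $\cH^2/\mul V$: if this operator part is continuous on $\dom V$, then setting $\whh_n$ to be its value at $\whf_n$ corrected by the fixed element of $\mul V$ furnishing $\whh$ gives $\whh_n\to\whh$, as required. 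The delicate point, and where I would spend the most care, is precisely securing this continuity from the hypothesis that $V$ is closed (via the closed-graph theorem applied to the operator part over $\dom V$); this is the crux on which the asserted equality of domains, and hence the lemma, turns.
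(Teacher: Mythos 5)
Your reduction is correct as far as it goes, and it is essentially the paper's proof reorganized. The easy inclusion $\ol{V\vrt_T}\subseteq V\vrt_{\ol{T\mcap\dom V}}$, the observation that all three multivalued parts equal $\mul V$ (because $0\in T$ and $0\in\ol{T\mcap\dom V}$), and the appeal to Lemma~\ref{lem:Pop} collapse the lemma to the single domain equality $\dom\ol{V\vrt_T}=\dom V\mcap\ol{T\mcap\dom V}$. The paper does the same thing in sequential form: with $T_V:=T\mcap\dom V$ it fixes $(\whf,\whh)\in V\vrt_{\ol{T_V}}$, picks $\whf_n\in T_V$ with $\whf_n\lto\whf$, lifts to $(\whf_n,\whh_n)\in V\vrt_T$, asserts that $\whh_n$ converges to some $\whh_*$, and then corrects by $\whh_*-\whh\in\mul V=\mul(V\vrt_T)\subseteq\mul\ol{V\vrt_T}$. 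So you have isolated exactly the right crux: everything turns on making the second components converge.

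The gap is that your mechanism for this step fails. The closed graph theorem yields continuity of the operator part of $V$ only when its domain is complete, i.e.\ when $\dom V$ is closed, and that is not among the hypotheses: for a closed relation the operator part is merely a \emph{closed} operator, in general unbounded. Already $\mul V=\{0\}$ with $V$ a closed unbounded operator defeats the plan, since then $\whf_n\lto\whf$ with $\whf_n\in T_V$ gives no control whatsoever on $\whh_n=V\whf_n$, and there is nothing in $\mul V$ to correct with. Moreover, the continuity you want cannot be conjured from closedness, because the domain equality you reduced to can genuinely fail for closed $V$: take $\cH=\fH$, let $V\geq1$ be an unbounded self-adjoint operator in $\fH^2$, fix $\phi\notin\dom V$, and put $T:=\{\whf\in\dom V\vrt\braket{V\whf,\phi}=0\}=V^{-1}(\{\phi\}^\bot)$. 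Then $T$ is dense in $\fH^2$ (since $V^{-1}$ is bounded, self-adjoint, injective and $\phi\notin\ran V^{-1}$), yet $T$ is closed in the graph norm and proper in $\dom V$ (the functional $\whf\mapsto\braket{V\whf,\phi}$ is graph-continuous and, as $\ran V=\fH^2$, not identically zero), so $\ol{V\vrt_T}=V\vrt_T$ has domain $T$ while $V\vrt_{\ol{T\mcap\dom V}}=V$ has domain $\dom V\supsetneq T$. Thus the delicate step you flagged — which is the same point where the paper's proof asserts convergence of $(\whh_n)$ from the containment of $V\vrt_T$ in the closed set $V\vrt_{\ol{T_V}}$ — cannot be secured from closedness of $V$ alone; your closed-graph route does become valid under an additional hypothesis such as $\dom V$ closed, for then $\mul V$ is closed, the operator part is a closed operator defined on the Banach space $\dom V$, and the closed graph theorem gives exactly the continuity your argument needs.
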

\begin{proof}
Put $T_V:=T\mcap\dom V$; then $V\vrt_T=V\vrt_{T_V}$.
Let $(\whf,\whh)\in V\vrt_{\ol{T_V}}$.
Since $\whf\in\ol{T_V}$, there is a sequence
$(\whf_{n})\subseteq T_V$ such that
$\norm{\whf_{n}-\whf}_{\fH^2}\lto0$ as $n\lto\infty$.
Since $T_V\subseteq \dom V$,
there is a $\whh_{n}\in\ran (V\vrt_T)$
such that $(\whf_{n},\whh_{n})\in V\vrt_T$.
Since $V\vrt_T$ is contained in a subspace
$V\vrt_{\ol{T_V}}$,
$\whh_n$ converges in $\cH^2$-norm to some $\whh_*\in\cH^2$
as $n\lto\infty$. This shows
$(\whf,\whh_*)\in\ol{V\vrt_T}$, and then
$\whh_*-\whh\in\mul V$. Therefore,
for every $(\whf,\whh)\in V\vrt_{\ol{T_V}}$
there is a $\whxi\in\mul V$
such that
$(\whf,\whh+\whxi)\in\ol{V\vrt_T}$.
This shows
$V\vrt_{\ol{T_V}}\subseteq\ol{V\vrt_T}$,
and then
$V\vrt_{\ol{T_V}}=\ol{V\vrt_T}$.
\end{proof}
If $V$ is a standard unitary operator,
that is, $V\in[\fH^2,\cH^2]$ and
$V^{-1}=V^+\in[\cH^2,\fH^2]$
(see also \cite[Definition~2.5]{Derkach09}),
the equality in \eqref{eq:ineVT} can be verified by direct
computation:
$(V\vrt_T)^+$ consists
of $(\whh,\whg)\in\cH^2\times\fH^2$ such that
$\whg-V^{+}\whh\in T^+$. Now $(V\vrt_T)^+=U^{-1}$
gives the result.

If $T=\ol{T}$, one recovers
from the lemma the equality in \eqref{eq:ineVT}
by using
\[
(\ol{T\mcap\dom V})\mcap\dom V=T\mcap\dom V\,.
\]

The next criterion for the equality in \eqref{eq:ineVT}
to hold relies on hypothesis \ref{item:V} and the following
observation.
Let $P_T$ be an orthogonal
(with respect to the Hilbert space metric)
projection in $\fH^2$ onto $T\in\wtscrC(\fH)$. Then
\begin{equation}
P_{T}\dom V\subseteq\dom V
\quad\Leftrightarrow\quad
P_{T}\dom V=T\mcap\dom V
\label{eq:PTequiv}
\end{equation}
since
\begin{equation}
T\mcap\dom V=
P_T\dom V\mcap\dom V\,.
\label{eq:nftr}
\end{equation}

If one of the conditions in \eqref{eq:PTequiv} holds then
\begin{equation}
\dom V=(T\mcap\dom V)\hop(T^\bot\mcap\dom V)
\label{eq:domVT}
\end{equation}
where $T^\bot$ is the orthogonal complement
in $\fH^2$ of $T$.
For example, if $T\subseteq\dom V$ then
$P_T\dom V=T$; if $T\supseteq\dom V$
then $P_T\dom V=\dom V$.
Recall \cite{Allan98} that every projection has
a nontrivial invariant subspace.
\begin{lem}\label{lem:clVTan0001}
Let $T$ be closed and let
\ref{item:V} hold. If also one of the conditions in
\eqref{eq:PTequiv} holds then
\[
\ol{T\mcap\dom V}=\ol{T\mcap\dom\ol{V}}\,.
\]
Moreover, if $V$ is a closable operator
whose closure has closed domain then
there is the equality in \eqref{eq:ineVT}.
\end{lem}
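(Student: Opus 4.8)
The plan is to establish the two assertions in turn, the second resting on the first. Since $T$ is closed, $\ol{T}=T$ and $P_T$ is the orthogonal projection in $\fH^2$ onto $T$, so hypothesis \eqref{eq:PTequiv} puts the orthogonal splitting \eqref{eq:domVT}, $\dom V=(T\mcap\dom V)\hop(T^\bot\mcap\dom V)$, at our disposal; the hypothesis $(V)$ is also in force. I would first record the chain $\dom V\subseteq\dom\ol{V}\subseteq\ol{\dom V}$, where the first inclusion is trivial and the second follows by approximating any first coordinate of $\ol{V}$ by first coordinates of $V$. For the equality $\ol{T\mcap\dom V}=\ol{T\mcap\dom\ol{V}}$, the inclusion $\subseteq$ is immediate from $\dom V\subseteq\dom\ol{V}$. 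For $\supseteq$ I would take $\whf\in T\mcap\dom\ol{V}$ and, using $\dom\ol{V}\subseteq\ol{\dom V}$, choose $(\whf_n)\subseteq\dom V$ with $\whf_n\to\whf$; decomposing each $\whf_n$ via \eqref{eq:domVT} gives $P_T\whf_n\in T\mcap\dom V$, and continuity of $P_T$ yields $P_T\whf_n\to P_T\whf=\whf$, the last equality because $\whf\in T$. Hence $\whf\in\ol{T\mcap\dom V}$, so $T\mcap\dom\ol{V}\subseteq\ol{T\mcap\dom V}$, and taking closures gives the claimed equality. The only active ingredient here is the splitting \eqref{eq:domVT}.

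For the second assertion I want the equality $\ol{V\vrt_T}=\ol{V}\vrt_{\ol{T}}$ in \eqref{eq:ineVT}, which (as $\ol{T}=T$) reads $\ol{V\vrt_T}=\ol{V}\vrt_T$. Since $\ol{V}\vrt_T=\ol{V}\mcap(T\times\cH^2)$ is an intersection of two closed sets it is closed, and it contains $V\vrt_T$; hence it contains $\ol{V\vrt_T}$, which is the inclusion of \eqref{eq:ineVT}. For the reverse inclusion I would use that $V$ is a closable operator whose closure $\ol{V}$ has closed domain: then $\ol{V}$ is a closed operator defined on all of the Hilbert space $\dom\ol{V}$, hence bounded there by the closed graph theorem. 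Given an arbitrary $(\whf,\ol{V}\whf)\in\ol{V}\vrt_T$, so $\whf\in T\mcap\dom\ol{V}$, the first assertion furnishes $(\whf_n)\subseteq T\mcap\dom V$ with $\whf_n\to\whf$; then $(\whf_n,V\whf_n)\in V\vrt_T$ and, by boundedness of $\ol{V}$ together with $V\whf_n=\ol{V}\whf_n$, one has $V\whf_n\to\ol{V}\whf$. Thus $(\whf,\ol{V}\whf)\in\ol{V\vrt_T}$, giving $\ol{V}\vrt_T\subseteq\ol{V\vrt_T}$ and the desired equality.

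I expect the boundedness step in the second assertion to be the crux: it is exactly the hypothesis that $\ol{V}$ has closed domain that, through the closed graph theorem, secures $V\whf_n\to\ol{V}\whf$ and thereby lets the approximation built in the first assertion pass to the images. Were $\dom\ol{V}$ not closed, $\ol{V}$ could fail to be bounded and the images $V\whf_n$ need not converge to $\ol{V}\whf$, so this route to the reverse inclusion in \eqref{eq:ineVT} would break down.
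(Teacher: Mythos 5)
Your proof is correct, but it takes a genuinely different route from the paper's. The paper proves both statements through adjoint calculus: it first uses \eqref{eq:domVT} to decompose $V=(V\vrt_T)\hsum(V\vrt_{T^\bot})$, then invokes hypothesis $(V)$ (in the form $U=\ol{U}$, giving $(\ol{V}\vrt_T)^+=U^{-1}$) to establish the identity $(\ol{V}\vrt_{T})^+=(V\vrt_T)^+\vrt_{\dom(V\vrt_{T^\bot})^+}$ of \eqref{eq:djet2}; comparing multivalued parts yields the first statement, and the second follows from the resulting criterion that $\ol{V}\vrt_T=\ol{V\vrt_T}$ iff $\dom(V\vrt_T)^+\subseteq\dom(V\vrt_{T^\bot})^+$, which holds automatically once $\dom V^+=\cH^2$. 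Your closed-graph argument is exactly what makes $V$ bounded, hence $\dom V^+=\cH^2$, so your closing step and the paper's are equivalent in substance; but your path to it is a direct approximation argument --- continuity of $P_T$ together with \eqref{eq:PTequiv} gives $T\mcap\dom\ol{V}\subseteq\ol{T\mcap\dom V}$, and boundedness of $\ol{V}$ then transports the approximating sequence from domains to graphs. Two observations. First, your route never uses hypothesis $(V)$: the first assertion needs only the closedness of $T$ and \eqref{eq:PTequiv}, and the second additionally only the closed graph theorem; so your argument is more elementary and formally establishes the lemma under weaker hypotheses. Second, what the paper's heavier machinery buys is the identity \eqref{eq:djet2} itself, from which the remarks following the lemma extract sharper byproducts (e.g.\ that under the lemma's hypotheses equality in \eqref{eq:ineVT} holds iff $\dom(V\vrt_T)^+=\dom V^+$, a criterion usable when $\dom V^+\neq\cH^2$); your sequence argument proves the lemma as stated but yields none of these refinements. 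In spirit your mechanism is the graph-approximation argument of the paper's Lemma~\ref{lem:Wie}, transplanted from closed $V$ to closable $V$ by means of the projection $P_T$ and the closed graph theorem.
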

\begin{proof}
First we show
\begin{equation}
V=V_T:=(V\vrt_T)\hsum(V\vrt_{T^\bot})\,.
\label{eq:v1}
\end{equation}
By definition $V_T\subseteq V$. Since
$\mul V_T=\mul V$, by Lemma~\ref{lem:Pop}
$V_T=V$ iff $\dom V_T=\dom V$. But the latter
equality holds by \eqref{eq:domVT}.

Next we show
\begin{equation}
(\ol{V}\vrt_{T})^+=(V\vrt_T)^+
\vrt_{\dom(V\vrt_{T^\bot})^+ }\,.
\label{eq:djet2}
\end{equation}
Using $U=\ol{U}$ one has
$(\ol{V}\vrt_{T})^+=U^{-1}$. On the other hand,
by \eqref{eq:v1}
\[
V^+=(V\vrt_T)^+\mcap(V\vrt_{T^\bot})^+
\]
and then
\[
(\ol{V}\vrt_{T})^+=
\bigl( (V\vrt_T)^+\mcap(V\vrt_{T^\bot})^+ \bigr)
\hsum(\{0\}\times T^+)\,.
\]
Since $\dom(V\vrt_T)\subseteq T$ implies
$\mul(V\vrt_T)^+\supseteq T^+$, \ie
$(V\vrt_T)^+\supseteq\{0\}\times T^+$,
one concludes that $\hsum$ is distributive
over $\mcap$:
\[
(\ol{V}\vrt_{T})^+=(V\vrt_T)^+\mcap U_2\,,
\quad
U_2:=(V\vrt_{T^\bot})^+
\hsum(\{0\}\times T^+)\,.
\]
Since
\[
(V\vrt_{T^\bot})^+\supseteq
(\ol{V}\vrt_{T^\bot})^+\supseteq
V^+\hsum(\{0\}\times(T^\bot)^+)
\]
and $(T^\bot)^+=(T^+)^\bot$
($=\whJ_\fH(T)$)
it follows that
\[
U_2\supseteq V^+\hsum(\{0\}\times\fH^2)=
\dom V^+\times\fH^2\,.
\]
This shows
\[
U_2=\dom(V\vrt_{T^\bot})^+\times\fH^2\,.
\]

By \eqref{eq:djet2}
\[
\mul(\ol{V}\vrt_{T})^+=\mul(V\vrt_T)^+
\quad\Rightarrow\quad
\ol{\dom}(\ol{V}\vrt_{T})=\ol{\dom}(V\vrt_T)
\]
which is the first statement of the lemma.

Again by \eqref{eq:djet2}
\[
\ol{V}\vrt_{T}=\ol{V\vrt_T}
\quad\Leftrightarrow\quad
\dom(V\vrt_T)^+\subseteq \dom(V\vrt_{T^\bot})^+\,.
\]
Therefore, if $\dom V^+=\cH^2$ then
there is the equality in \eqref{eq:ineVT}.
\end{proof}
\begin{rem}
From the above proof somewhat more general statements
can be extracted. For example,
under conditions of the lemma
there is the equality in \eqref{eq:ineVT}
iff $\dom(V\vrt_T)^+=\dom V^+$. But then
one needs additional criteria for the latter
equality to hold, unless $\dom V^+=\cH^2$.
Also, one could suppose in Lemma~\ref{lem:clVTan0001}
that $T$ is not necessarily closed
and then replace $T$ by $\ol{T}$
(resp. $P_T$ by $P_{\ol{T}}$) and add
an additional hypothesis
$T\mcap\dom V=\ol{T}\mcap\dom V$.
On the other hand, unless $T\subseteq\dom V$,
the hypotheses of Lemma~\ref{lem:clVTan0001}
are not optimal, as can be seen by letting
$V=\ol{V}$; \cf Lemma~\ref{lem:Wie}. As a rule,
in applications that we discuss $T\subseteq\dom V$
does not hold in general.
\end{rem}
We compare the three lemmas
with some related conclusions obtained in
\cite{Wietsma12} by assuming in addition that
a relation $V\co\whJ_\fH\lto\whJ_\cH$ is
isometric, $V\subseteq V_\#$. This is precisely
the case when we consider \bp\!\!'s in
subsequent sections.
\begin{rem}
If $V\subseteq V_\#$ and $T\subseteq\dom V$ then by
\cite[Lemma~3.8]{Wietsma12}
\begin{equation}
V(T)^+\mcap\ran V=
V(T^+\mcap\dom V)\;(=V(T^+))\,.
\label{eq:Wie2}
\end{equation}
Particularly, if $V=V_\#$, $\dom V$ is closed,
and $T$ satisfies \eqref{eq:kerVTdomV}, then
by \cite[Proposition~4.5, Eq.~(4.1)]{Wietsma12}
$V(T)^+=V(T^+)$, $\ol{V(T)}=V(\ol{T})$.
To compare with, if $V=V_\#$, $\dom V$ is closed,
and $T\subseteq\dom V$, then \ref{item:V} holds
and by Lemma~\ref{lem:Wie}
there is the equality in \eqref{eq:ineVT}.
Condition \ref{item:Vp} reads
$\ol{T}\hsum\ker V\in\wtscrC(\fH)$,
and is satisfied if $T$ satisfies
\eqref{eq:kerVTdomV}.
Suppose now $V\subseteq V_\#$, $\ran V_\#=\cH^2$,
and $T\subseteq\dom V$ is closed. Then
condition \ref{item:V} holds and
by Lemma~\ref{lem:clVTan0001}
there is the equality in \eqref{eq:ineVT};
hence $V(T)^+=V_\#(T^+)$. Suppose in addition
$\ran\ol{V}=\cH^2$, then $V_\#(T^+)=\ol{V}(T^+)$.
Indeed,
let $\whh\in V_\#(T^+)$, then
$(\exists\whf\in T^+\mcap\dom V_\#)$ $(\whf,\whh)\in V_\#$.
Since $\ran \ol{V}=\cH^2$, $(\exists\whg\in\dom \ol{V})$
$(\whg,\whh)\in \ol{V}$. This shows $\whg-\whf\in\ker V_\#$
($\subseteq T^+$), so $\whg\in T^+\mcap\dom \ol{V}$
and then $V_\#(T^+)\subseteq \ol{V}(T^+)$.
If moreover $V$ is surjective, then
by similar arguments $\ol{V}(T^+)=V(T^+)$,
in agreement with \eqref{eq:Wie2}.
\end{rem}
In applications to be following we mostly restrict
ourselves to the cases where $V$ satisfies
the hypotheses of Lemmas~\ref{lem:Wie}, \ref{lem:clVTan0001},
see \eg
Theorems~\ref{thm:PzfN}, \ref{thm:IUBP}, \ref{thm:GunTp}.
Otherwise we rely on a general hypothesis
$\ol{V\vrt_T}=\ol{V}\vrt_{\ol{T}}$;
see Theorem~\ref{thm:IBP-0}.
%%%%%%%%%%%%%%%%%%%%%%%%%%%%%%%%%%%%%%%%%%%%%%%%%%%%%%%%%%%%%%
%%%%%%%%%%%%%%%%%%%%%%%%%%%%%%%%%%%%%%%%%%%%%%%%%%%%%%%%%%%%%%
%%%%%%%%%%%%%%%%%%%%%%%%%%%%%%%%%%%%%%%%%%%%%%%%%%%%%%%%%%%%%%
%%%%%%%%%%%%%%%%%%%%%%%%%%%%%%%%%%%%%%%%%%%%%%%%%%%%%%%%%%%%%%
\section{The adjoint and closure of
Weyl family}\label{sec:transW}
%%%%%%%%%%%%%%%%%%%%%%%%%%%%%%%%%%%%%%%%%%%%%%%%%%%%%%%%%%%%%%
%%%%%%%%%%%%%%%%%%%%%%%%%%%%%%%%%%%%%%%%%%%%%%%%%%%%%%%%%%%%%%
%%%%%%%%%%%%%%%%%%%%%%%%%%%%%%%%%%%%%%%%%%%%%%%%%%%%%%%%%%%%%%
%%%%%%%%%%%%%%%%%%%%%%%%%%%%%%%%%%%%%%%%%%%%%%%%%%%%%%%%%%%%%%
Let $\fH$ be a Krein space (unless explicitly specified
otherwise), $\fL$ a Hilbert space.
Let $T\in\wtscrC_s(\fH)$, let $(\fL,\Gamma)$
be an \ibp for $T^+$ with Weyl family $M_\Gamma$.
By definition
\[
M_\Gamma(z)=\ran\Gamma_z\,,
\quad
\Gamma_z:=\Gamma\vrt_{\whfN_z(A_*)}=
\Gamma\vrt_{zI}\,,
\quad z\in\bbC_*\,.
\]
Let $A_\star:=\dom\ol{\Gamma}$; \cf
$A_*:=\dom\Gamma$.
Since the closure $\ol{\Gamma}$ is isometric
and moreover $\ol{A_\star}=\ol{A_*}=T^+$,
the pair $(\fL,\ol{\Gamma})$ is also an \ibp
for $T^+$ with the Weyl family
$M_{\ol{\Gamma}}$ given by
\[
M_{\ol{\Gamma}}(z)=\ran(\ol{\Gamma})_z\,,\quad
(\ol{\Gamma})_z:=\ol{\Gamma}\vrt_{\whfN_z(A_\star)}
=\ol{\Gamma}\vrt_{zI}\,,
\quad z\in\bbC_*\,.
\]
In the terminology of the previous section,
$M_\Gamma(z)$ is the Shmul'yan transform of
$zI$ induced by $\Gamma$, and similarly for
$M_{\ol{\Gamma}}(z)$.
It follows that
\begin{equation}
\ol{\Gamma_z}\subseteq(\ol{\Gamma})_z\,,
\label{eq:Gzineq}
\end{equation}
that $(\ol{\Gamma})_z$ is a closed relation,
and that
\[
M_\Gamma(z)\subseteq M_{\ol{\Gamma}}(z)
\quad\text{and then}\quad
\ol{M_\Gamma(z)}\subseteq \ol{M_{\ol{\Gamma}}(z)}\,.
\]

An application of Lemma~\ref{lem:Torth} requires that
there be the equality in \eqref{eq:Gzineq}. No
additional efforts are needed if $\Gamma=\ol{\Gamma}$;
otherwise we apply Lemma~\ref{lem:clVTan0001}.
Assuming for a moment $\ol{\Gamma_z}=(\ol{\Gamma})_z$
it immediately follows that
\begin{equation}
\ol{\fN_z(A_*)}=\ol{\fN_z(A_\star)}\,,\quad
\ol{M_\Gamma(z)}=\ol{M_{\ol{\Gamma}}(z)}\,.
\label{eq:MMMGGGs}
\end{equation}
This shows
$M_\Gamma(z)^*=M_{\ol{\Gamma}}(z)^*$, so
it suffices to put in Lemma~\ref{lem:Torth}
$V\equiv\ol{\Gamma}$ instead of $\Gamma$.

Let
\[
M_{\Gamma_\#}(z):=\Gamma_\#(\whfN_z(A_\#))
=\ran(\Gamma_\#\vrt_{zI})\,,
\quad
A_\#:=\dom\Gamma_\#\,.
\]
Recall that $\Gamma_\#:=(\Gamma^+)^{-1}$,
where $\Gamma^+$ is the Krein space adjoint of
$\Gamma\co\whJ_\fH\lto\whJ^\circ_\fL$.

For later reference we observe that
\[
\ol{A_\#}=S^+\quad\text{where}\quad
S:=\ker\ol{\Gamma}
\]
and $S\subseteq T=\ker\Gamma_\#$.
\begin{thm}\label{thm:rrz}
Let $T\in\wtscrC_s(\fH)$,
let $(\fL,\Gamma)$ be an \ibp for $T^+$, and
let $M_{\ol{\Gamma}}$ be the Weyl family corresponding
to an \ibp $(\fL,\ol{\Gamma})$ for $T^+$.
Then
\begin{equation}
M_{\ol{\Gamma}}(z)^*=M_{\Gamma_\#}(\ol{z})
\label{eq:MGG}
\end{equation}
for all $z\in\bbC_*$ such that $\ran(A_\star-zI)$ is
a subspace;
if also $\ran(A_\#-\ol{z}I)$ is a subspace, then
$M_{\ol{\Gamma}}(z)\in\wtscrC(\fL)$.
\end{thm}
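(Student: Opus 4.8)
The plan is to read $M_{\ol{\Gamma}}(z)$ as the Shmul'yan transform $V(T)$ with $V:=\ol{\Gamma}$ and $T:=zI$, the closed everywhere defined relation $\{(f,zf)\vrt f\in\fH\}$ in $\fH$, so that $V(T)=\ol{\Gamma}(zI)=M_{\ol{\Gamma}}(z)$, and then to invoke Lemma~\ref{lem:Torth}. The first thing to check is the equality in \eqref{eq:ineVT} for this $V$ and $T$. Since $V=\ol{\Gamma}$ is closed, Lemma~\ref{lem:Wie} gives $\ol{V\vrt_T}=V\vrt_{\ol{T\mcap\dom V}}$; because $T=zI=\ol{T}$, the identity $(\ol{T\mcap\dom V})\mcap\dom V=T\mcap\dom V$ noted after Lemma~\ref{lem:Wie} upgrades this to $\ol{V\vrt_T}=\ol{V}\vrt_{\ol{T}}$. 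Thus the equality in \eqref{eq:ineVT} holds automatically once we pass to $\ol{\Gamma}$, which is exactly why the statement is phrased for $M_{\ol{\Gamma}}$ rather than $M_\Gamma$ and why no separate closability hypothesis on $\Gamma$ is invoked.

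Next I would record the two algebraic identities that feed the lemma. A direct check from $[zf,g]=[f,g']$ shows that the Krein adjoint in $\fH$ of $zI$ is $T^+=\ol{z}I$. Since $\Gamma^+=(\ol{\Gamma})^+$ is closed, one has $V_\#=(\ol{\Gamma})_\#=(\Gamma^+)^{-1}=\Gamma_\#$ and $\ran V^+=\ran\Gamma^+=\dom\Gamma_\#=A_\#$. The crucial and most delicate step is then to translate the abstract hypotheses $(V)$ and $(V^\prime)$ of Lemma~\ref{lem:Torth} into the range conditions of the theorem. Here $(V)$ reads $zI\hsum A_\star\in\wtscrC(\fH)$ and $(V^\prime)$ reads $T^+\hsum\ran V^+=\ol{z}I\hsum A_\#\in\wtscrC(\fH)$. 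Substituting $h=f+g$ into $(f+g,zf+g')$ with $(g,g')\in A_\star$ gives $zI\hsum A_\star=\{(h,zh+k)\vrt h\in\fH,\,k\in\ran(A_\star-zI)\}$; because $zI$ is bounded and everywhere defined, this componentwise sum is closed precisely when $\ran(A_\star-zI)$ is closed. The same computation with $z,A_\star$ replaced by $\ol{z},A_\#$ shows that $(V^\prime)$ holds precisely when $\ran(A_\#-\ol{z}I)$ is a subspace. I expect this bookkeeping to be the main, if routine, obstacle, since it is where the hypotheses of the theorem are reconciled with those of Lemma~\ref{lem:Torth}.

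It then remains to assemble the pieces. Under $(V)$, Lemma~\ref{lem:Torth}\ref{item:Torth-1o} yields $V(T)^+=V_\#(T^+)=\Gamma_\#(\ol{z}I)=M_{\Gamma_\#}(\ol{z})$; since $\fL$ is a Hilbert space the Krein adjoint of the relation $V(T)=M_{\ol{\Gamma}}(z)$ in $\fL$ coincides with the Hilbert adjoint $M_{\ol{\Gamma}}(z)^*$, which is \eqref{eq:MGG}. Under $(V)$ together with $(V^\prime)$, Lemma~\ref{lem:Torth}\ref{item:Torth-2o} gives $\ol{V}(\ol{T})=\ol{\Gamma}(zI)=M_{\ol{\Gamma}}(z)\in\wtscrC(\fL)$, which is the asserted closedness.
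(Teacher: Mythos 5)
Your proof is correct and takes essentially the same route as the paper: read $M_{\ol{\Gamma}}(z)$ as the Shmul'yan transform $\ol{\Gamma}(zI)$, obtain the equality in \eqref{eq:ineVT} from Lemma~\ref{lem:Wie} together with the remark following it (both $\ol{\Gamma}$ and $zI$ being closed, which is exactly why the theorem concerns $M_{\ol{\Gamma}}$), translate $(V)$ and $(V^\prime)$ into the closedness of $\ran(A_\star-zI)$ and $\ran(A_\#-\ol{z}I)$, and conclude via Lemma~\ref{lem:Torth} with $V_\#=\Gamma_\#$ and $(zI)^+=\ol{z}I$. The only cosmetic difference is that you verify $zI\hsum A_\star\in\wtscrC(\fH)$ iff $\ran(A_\star-zI)$ is closed via the bounded shear $(h,h^\prime)\mapsto(h,h^\prime-zh)$, whereas the paper computes $(zI\hsum A_\star)^+=\whfN_{\ol{z}}(T)$ and the closure explicitly and then applies Lemma~\ref{lem:Pop}; both arguments are routine and equivalent.
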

\begin{proof}
We apply Lemma~\ref{lem:Torth} to
$M_{\ol{\Gamma}}(z)=\ol{\Gamma}(zI)$.
Condition \ref{item:V} therefore requires that
$zI\hsum A_\star$ be a closed relation:
The adjoint of
\[
zI\hsum A_\star=zI\hsum
(\{0\}\times\ran(A_\star-zI))
\]
is given by
\[
(zI\hsum A_\star)^+=\ol{z}I\mcap T=
\whfN_{\ol{z}}(T)
\]
and then the closure
\[
\ol{zI\hsum A_\star}=\whfN_{\ol{z}}(T)^+
=zI\hsum
(\{0\}\times\fN_{\ol{z}}(T)^{[\bot]})
\]
where the $J$-orthogonal complement
\[
\fN_{\ol{z}}(T)^{[\bot]}=\ol{\ran}(T^+-zI)=
\ol{\ran}(A_\star-zI)\,.
\]
By Lemma~\ref{lem:Pop} therefore,
$zI\hsum A_\star\in\wtscrC(\fH)$ iff
$\ran(A_\star-zI)$ is closed.

Similarly, condition \ref{item:Vp} requires
$\ol{z}I\hsum A_\#$ to be a closed relation,
which happens iff $\ran(A_\#-\ol{z}I)$ is closed.
\end{proof}
Stated otherwise:

$\circ$
if $\ran(A_\star-zI)$ is a
subspace, then there is the equality in
the inclusion
\[
M_{\ol{\Gamma}}(z)^*=\mul(\ol{\Gamma_\#\hsum(\ol{z}I
\times\{0\}) })\supseteq M_{\Gamma_\#}(\ol{z})\,,
\quad z\in\bbC_*\,.
\]

$\circ$
if $\ran(A_\#-\ol{z}I)$ is a
subspace, then there is the equality in
the inclusion
\[
M_{\Gamma_\#}(\ol{z})^*=\mul(\ol{\ol{\Gamma}\hsum(zI
\times\{0\}) })\supseteq M_{\ol{\Gamma}}(z)\,,
\quad z\in\bbC_*\,.
\]

According to the conclusion of Theorem~\ref{thm:rrz}
it is necessary that $\ran(T^+-zI)$ be a subspace.
If also $\ran(S^+-\ol{z}I)$ is a subspace,
we denote by $\Omega_\Gamma$ the set of all such $z\in\bbC_*$:
\begin{equation}
\Omega_\Gamma:=
\{z\in\bbC_*\vrt\,
\ran(T-\ol{z}I)\;\text{and}\;
\ran(S-zI)\;\text{are subspaces} \}
\,.
\label{eq:OmG}
\end{equation}
For example, if
$\fH$ is a Pontryagin space then
$\Omega_\Gamma=\bbC_*$ \cite{Azizov03}.

The next proposition gives equivalent
conditions for the closeness of
$\ran(A_\star-zI)$ and $\ran(A_\#-\ol{z}I)$
in Theorem~\ref{thm:rrz}.
\begin{prop}\label{prop:rrz}
Assume the conditions of Theorem~\ref{thm:rrz}
and let $z\in \Omega_\Gamma\neq\emptyset$.
\begin{enumerate}[label=\arabic*$^\circ$,
ref=\arabic*$^\circ$]
\item%\label{item:proprrz-1o}
$\ran(A_\star-zI)$ is a subspace iff
there is a $\wtT\in\scrC(\fH)$,
$\wtT\subseteq A_\star$, such that
\begin{equation}
T^+=\wtT\hsum\whfN_z(T^+)\,.
\label{eq:TpfN}
\end{equation}
\item%\label{item:proprrz-2o}
$\ran(A_\#-\ol{z}I)$ is a subspace iff
there is a $\wtT\in\scrC(\fH)$,
$\wtT\subseteq A_\#$, such that
\begin{equation}
S^+=\wtT\hsum\whfN_{\ol{z}}(S^+)\,.
\label{eq:TpfN2}
\end{equation}
\end{enumerate}
\end{prop}
The proposition is an immediate
corollary of the next lemma, \eg
\cite[Theorem~1.7.1]{Behrndt20},
\cite[Lemma~1.4]{Hassi07b},
\cite[Corollary~A.4]{Wietsma12}.
\begin{lem}\label{lem:Behrndt20}
Let $\fH$ be a linear set, let $L$, $R\in\scrC(\fH)$,
$L\subseteq R$, and $z\in\bbC$.
Equivalent are:
\begin{enumerate}[label=(\roman*),
ref=\roman*]
\item\label{item:Behrndt20-i}
$R=L\hsum\whfN_z(R)$.
\item\label{item:Behrndt20-ii}
$\ran(R-zI)=\ran(L-zI)$.
\end{enumerate}
\end{lem}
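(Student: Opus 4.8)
The statement to prove is Lemma~\ref{lem:Behrndt20}: for closed relations $L\subseteq R$ in a linear set $\fH$, the decomposition $R=L\hsum\whfN_z(R)$ holds iff $\ran(R-zI)=\ran(L-zI)$. The plan is to prove the two implications separately, using the shift $R-zI$ to reduce bookkeeping. I would first record the elementary observation that $\ran(R-zI)=\ran(L-zI)$ always carries the inclusion $\supseteq$ for free (since $L\subseteq R$), so the content of \ref{item:Behrndt20-ii} is really the reverse inclusion, and similarly that $L\hsum\whfN_z(R)\subseteq R$ is automatic because both summands lie in $R$ (the eigenspace $\whfN_z(R)=R\mcap zI$ is a subset of $R$ by definition). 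Thus in each direction only one nontrivial inclusion must be established.

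For \ref{item:Behrndt20-ii}$\Rightarrow$\ref{item:Behrndt20-i} I would take an arbitrary $(f,f^\prime)\in R$ and try to write it as a sum of an element of $L$ and an element of $\whfN_z(R)$. The natural move is to look at $f^\prime-zf\in\ran(R-zI)=\ran(L-zI)$, so there is some $(g,g^\prime)\in L$ with $g^\prime-zg=f^\prime-zf$. Then I would check that the difference $(f-g,\,f^\prime-g^\prime)=(f-g,\,z(f-g))$ lies in $zI$, and also in $R$ (since $(f,f^\prime)\in R$ and $(g,g^\prime)\in L\subseteq R$), hence in $\whfN_z(R)=R\mcap zI$. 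Writing $(f,f^\prime)=(g,g^\prime)\hsum(f-g,z(f-g))$ then exhibits the required componentwise decomposition, giving $R\subseteq L\hsum\whfN_z(R)$ and with the automatic reverse inclusion the equality \ref{item:Behrndt20-i}. The directness of this computation is why the closedness of $L,R$ and the Hilbert structure are not needed here — only linearity.

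For \ref{item:Behrndt20-i}$\Rightarrow$\ref{item:Behrndt20-ii} I would start from an element $f^\prime-zf\in\ran(R-zI)$ with $(f,f^\prime)\in R$, decompose $(f,f^\prime)=(g,g^\prime)\hsum(h,zh)$ with $(g,g^\prime)\in L$ and $(h,zh)\in\whfN_z(R)$, and simply compute $f^\prime-zf=(g^\prime-zg)+(zh-zh)=g^\prime-zg\in\ran(L-zI)$; the eigenvector contribution cancels. This yields $\ran(R-zI)\subseteq\ran(L-zI)$ and hence \ref{item:Behrndt20-ii}. I expect the main subtlety — rather than a genuine obstacle — to be purely formal: verifying that the componentwise sum $L\hsum\whfN_z(R)$ behaves as intended when $L$ and $\whfN_z(R)$ overlap, i.e.\ confirming that the representation need not be unique but the cancellation in each direction is unaffected by any overlap, since any common element contributes to both summands consistently. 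Because the ambient $\fH$ is only a linear set, I would be careful to use no metric, adjoint, or closedness argument anywhere; closedness of $L$ and $R$ is stated in the hypotheses but, as the excerpt's remark about relaxing \cite[Theorem~1.7.1]{Behrndt20} suggests, it plays no role in this purely algebraic equivalence.
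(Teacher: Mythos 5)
Your proof is correct and takes essentially the same route as the paper: the paper likewise dismisses (i)$\Rightarrow$(ii) as clear and proves (ii)$\Rightarrow$(i) by exactly your subtraction $(f,f^\prime)-(g,g^\prime)=(f-g,z(f-g))\in R\mcap zI$, with no use of closedness or any metric. One minor note: in this paper $\scrC(\fH)$ denotes \emph{all} linear relations in $\fH$, not closed ones, so closedness is not even hypothesized --- consistent with your observation that the argument is purely algebraic.
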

Note that $L$ is not unique in that,
if at least one of
\eqref{item:Behrndt20-i}--\eqref{item:Behrndt20-ii}
holds then
$\ran(R-zI)=\ran(L_1-zI)$ for
every $L_1\in\scrC(\fH)$ such that
$L\subseteq L_1\subseteq R$; hence
\eqref{item:Behrndt20-i}
holds for each such $L_1$ in place of $L$.
For later reference we mention that
\eqref{item:Behrndt20-i} holds for each such $L_1$
in place of $R$.
\begin{cor}\label{cor:projP1}
Let $L_1\in\scrC(\fH)$ such that
$L\subseteq L_1\subseteq R$.
If at least one of \eqref{item:Behrndt20-i}
or \eqref{item:Behrndt20-ii} holds then also
\[
L_1=L\hsum\whfN_z(L_1)
\]
holds.
\end{cor}
A familiar example in a Hilbert space setting,
to which Theorem~\ref{thm:rrz}
and Proposition~\ref{prop:rrz} apply,
is a $B$-generalized \bp\!\!; that is,
a \ubp $(\fL,\Gamma)$ for $T^*$ such that
$\ran\Gamma_0=\fL$, and then such that
$\wtT=T_0$ is self-adjoint in a Hilbert space $\fH$;
see \cite[Definition~5.6, Proposition~5.9]{Derkach06},
\cite[Definition~3.5]{Derkach17}. In this
case $S=T$ and \eqref{eq:TpfN}
(or \eqref{eq:TpfN2}) is known as
the von Neumann formula.

That the closeness of $\ran\Gamma_0$ implies the
self-adjointness of $T_0$ is shown in
\cite[Corollary~4.17]{Derkach06}.
In a Krein space setting this
result follows from Proposition~\ref{prop:equivfNTh}.
\begin{prop}\label{prop:equivfNTh}
Let $T\in\wtscrC_s(\fH)$, let
$(\fL,\Gamma)$ be an \eubp for $T^+$.
Given an arbitrary $\Theta\in\scrC(\fL)$,
let
\begin{equation}
\mathring{T}_\Theta:=\ol{\Gamma}^{\;-1}(\Theta)
\quad\text{so that}\quad
\mathring{T}_\Theta\in\mrm{Ext}_T\,,\quad
\mathring{T}_\Theta\subseteq A_\star
\label{eq:inmrTTh}
\end{equation}
($\mathring{T}_\Theta=S=T$ for
$\Theta\mcap\ran\ol{\Gamma}\subseteq\mul\ol{\Gamma}$;
$\mathring{T}_\Theta=A_\star$ for
$\Theta\supseteq\ran\ol{\Gamma}$).

Suppose that
\[
\Theta\in\wtscrC(\fL)\quad\text{and}\quad
\Theta\hsum\ran\ol{\Gamma}\in\wtscrC(\fL)
\quad\text{and}\quad
\Theta^*\hsum\ran\ol{\Gamma}\in\wtscrC(\fL)\,.
\]
Then $\mathring{T}_\Theta\in\wtscrC(\fH)$ whose
adjoint
$\mathring{T}^+_\Theta=\mathring{T}_{\Theta^*}$.
\end{prop}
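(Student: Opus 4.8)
The plan is to recognize $\mathring{T}_\Theta=\ol{\Gamma}^{\,-1}(\Theta)$ as the Shmul'yan transform $V(\Theta)$ of $\Theta$ induced by the closed relation $V:=\ol{\Gamma}^{\,-1}$ from the $\whJ^\circ_\fL$-space $\fL^2$ to the $\whJ_\fH$-space $\fH^2$, and then to read off both assertions from Lemma~\ref{lem:Torth} applied with this $V$ and with $T\equiv\Theta$. Before invoking the lemma I would record the elementary inclusions: since $(0,0)\in\Theta$ and $\ol{\Gamma}$ is \eubp (so $\ker\ol{\Gamma}=\ker\Gamma_\#=T=S$), every $\whf\in\ker\ol{\Gamma}$ lies in $\ol{\Gamma}^{\,-1}(\Theta)$, while by definition $\ol{\Gamma}^{\,-1}(\Theta)\subseteq\dom\ol{\Gamma}=A_\star\subseteq T^+$; hence $T\subseteq\mathring{T}_\Theta\subseteq A_\star\subseteq T^+$, so that $\mathring{T}_\Theta\in\mrm{Ext}_T$ once closedness is established. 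A point worth flagging at the outset is that, because $\fL$ is a \emph{Hilbert} space, the adjoint $\Theta^+$ in the Krein space $\fL^2$ entering Lemma~\ref{lem:Torth} coincides with the Hilbert adjoint $\Theta^*$, which is exactly the object appearing in the statement.

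Next I would verify the three ingredients on which Lemma~\ref{lem:Torth} rests. The equality in \eqref{eq:ineVT} for the pair $(V,\Theta)$ follows from Lemma~\ref{lem:Wie} together with the remark for a closed argument: $V$ is closed and $\Theta=\ol{\Theta}$ (first hypothesis), so $\ol{V\vrt_\Theta}=V\vrt_{\ol{\Theta\mcap\dom V}}=V\vrt_\Theta=\ol{V}\vrt_{\ol{\Theta}}$. For condition $(V)$ I would compute $\dom\ol{V}=\dom V=\ran\ol{\Gamma}$, so that $(V)$ reads $\Theta\hsum\ran\ol{\Gamma}\in\wtscrC(\fL)$, which is the second hypothesis. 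For condition $(V^\prime)$ the key computation is $V^+=(\ol{\Gamma}^{\,-1})^+=(\ol{\Gamma}^+)^{-1}=(\Gamma^+)^{-1}=\Gamma_\#=\ol{\Gamma}$, where the \eubp property is used in the last step; consequently $\ran V^+=\ran\ol{\Gamma}$ and $(V^\prime)$, namely $\Theta^+\hsum\ran V^+\in\wtscrC(\fL)$, becomes $\Theta^*\hsum\ran\ol{\Gamma}\in\wtscrC(\fL)$, the third hypothesis.

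With these in hand the two conclusions are immediate. Part~\ref{item:Torth-2o} of Lemma~\ref{lem:Torth}, which needs $(V)$ and $(V^\prime)$, yields $\mathring{T}_\Theta=\ol{V}(\ol{\Theta})=V(\Theta)\in\wtscrC(\fH)$, \ie closedness. Part~\ref{item:Torth-1o}, which needs $(V)$, yields $\mathring{T}_\Theta^+=V_\#(\Theta^+)=V_\#(\Theta^*)$; since the same computation as above gives $V_\#=(V^+)^{-1}=\ol{\Gamma}^{\,-1}=V$ (reflecting that $\ol{\Gamma}$ is unitary, whence so is $V=V_\#$), this equals $V(\Theta^*)=\ol{\Gamma}^{\,-1}(\Theta^*)=\mathring{T}_{\Theta^*}$, as claimed. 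The main obstacle is not any single estimate but the bookkeeping of adjoints across the two Krein spaces $\fL^2$ and $\fH^2$: one must check that $V^+=\ol{\Gamma}$ and $V_\#=V$ (both consequences of the \eubp hypothesis) and that $\Theta^+=\Theta^*$ (a consequence of $\fL$ being a Hilbert space), for it is precisely these identifications that make the three hypotheses of the proposition coincide with the equality in \eqref{eq:ineVT}, condition $(V)$, and condition $(V^\prime)$, respectively.
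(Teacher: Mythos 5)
Your proposal is correct and follows essentially the same route as the paper: both view $\mathring{T}_\Theta$ as the Shmul'yan transform of $\Theta$ under $V=\ol{\Gamma}^{\;-1}$, obtain the equality in \eqref{eq:ineVT} from the closedness of $\ol{\Gamma}^{\;-1}\vrt_\Theta$ (the paper notes this directly, you route it through Lemma~\ref{lem:Wie}), and then read off the adjoint from Lemma~\ref{lem:Torth}-\ref{item:Torth-1o} and the closedness from Lemma~\ref{lem:Torth}-\ref{item:Torth-2o}. Your explicit bookkeeping of $V^+=\ol{\Gamma}$, $V_\#=V$, and $\Theta^+=\Theta^*$ merely spells out identifications the paper leaves implicit.
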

When $J=I$ and $\Gamma=\Gamma_\#$
is a (reduction) operator,
the existence of a maximal symmetric
$\wtT\in\mrm{Ext}_T$ such that
$\wtT^*\subseteq A_*$ ($=A_\star$)
is stated in \cite[Corollary~2.12]{Derkach12}.
\begin{proof}
The statement in \eqref{eq:inmrTTh} is clear from
\[
\ol{\Gamma}^{\;-1}(\Theta)=
\ol{\Gamma}^{\;-1}(\Theta\mcap\ran\ol{\Gamma})\,.
\]
Since
$\ol{\Gamma}^{\;-1}\vrt_\Theta$ is closed for
a closed $\Theta$,
the adjoint
of $\mathring{T}_\Theta$ follows from
Lemma~\ref{lem:Torth}-\ref{item:Torth-1o}.
That $\mathring{T}_\Theta$ is closed is due to
Lemma~\ref{lem:Torth}-\ref{item:Torth-2o}.
\end{proof}
Letting $\Theta=\{0\}\times\fL$ and
using $\dom\ran\ol{\Gamma}=\ran(\ol{\Gamma})_0$
it follows that, if $\ran(\ol{\Gamma})_0$ is closed,
then $\mathring{T}_0$ is self-adjoint.
In a Hilbert space case,
a \ubp $(\fL,\Gamma)$ for $T^*$
such that $T_0$ is self-adjoint
is an $S$-generalized \bp
(\cite[Definition~5.11]{Derkach17}). Thus,
every \ubp $(\bbC^d,\Gamma)$ for $T^*$, with $d\in\bbN$,
is an $S$-generalized \bp
\begin{rem}
If $\ran\ol{\Gamma}$ is closed, then
$A_\star=T^+$ and one extracts from
Proposition~\ref{prop:equivfNTh}
a 1-1 correspondence between all $\Theta\in\wtscrC(\fH)$
such that $\mul\ol{\Gamma}\subseteq\Theta
\subseteq\ran\ol{\Gamma}$
($=\ol{\ran}\Gamma$)
and all closed $\mathring{T}_\Theta\in\mrm{Ext}_T$.
A similar correspondence principal in a Hilbert
space case is stated in
\cite[Theorem~7.10]{Derkach12} (see also reference therein)
in case $\ol{\Gamma}$ is an operator.
\end{rem}
We write down several corollaries of
Theorem~\ref{thm:rrz}.
In what follows, given a subset $\cO\subseteq\bbC$,
we let $\cO^*:=\{z\vrt\ol{z}\in\cO\}$, so that
the subsets
$\cO\mcup\cO^*$ and $\cO\mcap\cO^*$, if nonempty,
are symmetric with respect to the real axis.

As an example,
if $\Pi_\Gamma$ is an \obt for $T^+$, then for
$z\in\Omega_\Gamma$,
$M_\Gamma(z)\in\wtscrC(\fL)$ and
$M_\Gamma(z)^*=M_\Gamma(\ol{z})$.
In this case $\Omega_\Gamma$, if nonempty, is symmetric
(with respect to $\bbR$); see \eqref{eq:OmG}.
Moreover $\Omega_\Gamma\mcap\res T_0=
\bbC_*\mcap\res T_0 $.
Thus, if $\bbC_*\mcap\res T_0\neq\emptyset$
then $\Omega_\Gamma\neq\emptyset$, but the
converse implication does not necessarily hold;
\ie $M_\Gamma(z)$ need not be an operator
for $z\in\Omega_\Gamma$.

The next corollary is useful
for the characterization of $M_{\ol{\Gamma}}(z)^*$
on a symmetric subset of $\Omega_\Gamma\mcap\Omega^*_\Gamma$.
\begin{cor}\label{cor:rrzz}
Assuming the hypotheses of Theorem~\ref{thm:rrz}
suppose in addition that the set
\[
\delta_\Gamma:=
(\Omega_\Gamma\setm\sigma^0_p(T))\mcap
(\Omega_\Gamma\setm\sigma^0_p(T))^*
\]
is nonempty and let $z\in\delta_\Gamma$.
Then:
\begin{enumerate}[label=\arabic*$^\circ$,
ref=\arabic*$^\circ$]
\item\label{item:rrzz-1o}
$\ran(A_*-z I)=\fH$.
\item\label{item:rrzz-2o}
$M_{\ol{\Gamma}}(z)^*=M_{\Gamma_\#}(\ol{z})$ and
$M_{\ol{\Gamma}}(z)\in\wtscrC(\fL)$.
\end{enumerate}
\end{cor}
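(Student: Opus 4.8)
The plan is to reduce assertion~\ref{item:rrzz-2o} to Theorem~\ref{thm:rrz} and to concentrate the effort on assertion~\ref{item:rrzz-1o}. First I would unpack the hypothesis $z\in\delta_\Gamma$: by definition both $z$ and $\ol{z}$ lie in $\Omega_\Gamma\setm\sigma^0_p(T)$. Hence, for $\omega\in\{z,\ol{z}\}$, the sets $\ran(T^+-\omega I)$ and $\ran(S^+-\ol{\omega}I)$ are subspaces (membership in $\Omega_\Gamma$), while $\fN_\omega(T)=\fN_{\ol{\omega}}(T)=\{0\}$ (since $\omega,\ol{\omega}\in\bbC_*$ but $\omega,\ol{\omega}\notin\sigma^0_p(T)$); because $S\subseteq T$ also $\fN_\omega(S)=\fN_{\ol{\omega}}(S)=\{0\}$. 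Now I invoke the identity \eqref{eq:sqnd} from the proof of Theorem~\ref{thm:rrz}, namely $\fN_{\ol{\omega}}(T)^{[\bot]}=\ol{\ran}(T^+-\omega I)$, together with its $(V^\prime)$-counterpart $\fN_\omega(S)^{[\bot]}=\ol{\ran}(S^+-\ol{\omega}I)$. Triviality of these eigenspaces forces $\ol{\ran}(T^+-\omega I)=\fH$ and $\ol{\ran}(S^+-\ol{\omega}I)=\fH$; combined with the closedness supplied by $\Omega_\Gamma$ this yields the full ranges $\ran(T^+-\omega I)=\fH$ and $\ran(S^+-\ol{\omega}I)=\fH$, for both $\omega=z$ and $\omega=\ol{z}$.

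The delicate point, and what I expect to be the main obstacle, is transporting these full ranges from the closed relations $T^+=\ol{A_\star}$ and $S^+=\ol{A_\#}$ down to the (generally non-closed) domains $A_*\subseteq A_\star\subseteq A_\#$, that is, establishing $\ran(A_*-\omega I)=\fH$. Density of the range is not enough on a non-closed domain, so I would route through Proposition~\ref{prop:rrz} and Lemma~\ref{lem:Behrndt20}: it suffices to produce a closed relation $L$ with $T\subseteq L\subseteq A_*$ and $\ran(L-\omega I)=\fH$ for both $\omega=z,\ol{z}$, since then $\fH=\ran(L-\omega I)\subseteq\ran(A_*-\omega I)\subseteq\fH$. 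By Lemma~\ref{lem:Behrndt20} producing such an $L$ is equivalent to producing the von Neumann--type splitting $T^+=L\hsum\whfN_\omega(T^+)$; the natural candidate is a self-adjoint extension $\mathring{T}_\Theta$ of $T$ inside the boundary-pair domain, furnished by Proposition~\ref{prop:equivfNTh} for a suitable self-adjoint $\Theta\in\wtscrC(\fL)$ with $z,\ol{z}\in\res\mathring{T}_\Theta$. The real work is the existence of such an extension contained in $A_*$ with $z,\ol{z}$ regular, and this is precisely where both symmetric range conditions packaged into $\delta_\Gamma$ (for $T^+$ at $\omega$ and for $S^+$ at $\ol{\omega}$) are simultaneously needed.

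Finally, assertion~\ref{item:rrzz-2o} is a clean consequence. From $\Gamma\subseteq\ol{\Gamma}\subseteq\Gamma_\#$ one has $A_*\subseteq A_\star\subseteq A_\#$, so $\ran(A_*-\omega I)=\fH$ immediately upgrades to $\ran(A_\star-\omega I)=\fH$ and $\ran(A_\#-\omega I)=\fH$ (both closed) for $\omega\in\{z,\ol{z}\}$. Reading these at $\omega=z$ supplies exactly the two hypotheses of Theorem~\ref{thm:rrz}, that $\ran(A_\star-zI)$ and $\ran(A_\#-\ol{z}I)$ be subspaces, whence $M_{\ol{\Gamma}}(z)^*=M_{\Gamma_\#}(\ol{z})$ and $M_{\ol{\Gamma}}(z)\in\wtscrC(\fL)$; since $\delta_\Gamma$ is symmetric about $\bbR$, interchanging the roles of $z$ and $\ol{z}$ gives the same conclusions at $\omega=\ol{z}$. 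I would close by noting that in the cases of interest where $\Gamma=\ol{\Gamma}$ (a boundary relation, or a standard relation in a Pontryagin space where $\Omega_\Gamma=\bbC_*$) the domains collapse, $A_*=A_\star=A_\#$, and the obstacle of the middle paragraph is alleviated because the extension $\mathring{T}_\Theta$ can be read off from the canonical decomposition of the closed relation $M_{\Gamma}(z)$.
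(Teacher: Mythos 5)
Your reduction of \ref{item:rrzz-2o} to Theorem~\ref{thm:rrz} via the chain $A_*\subseteq A_\star\subseteq A_\#$ is exactly the paper's argument, and your preliminary observations are correct (for $\omega\in\{z,\ol{z}\}$ the ranges $\ran(T^+-\omega I)$ and $\ran(S^+-\ol{\omega}I)$ are closed and, by \eqref{eq:sqnd} and the exclusion of both $z,\ol{z}$ from $\sigma^0_p(T)$, equal to $\fH$). The genuine gap is in your middle paragraph, which is where the entire content of \ref{item:rrzz-1o} lives: you reduce $\ran(A_*-\omega I)=\fH$ to producing a closed relation $L$ with $T\subseteq L\subseteq A_*$ and $\ran(L-\omega I)=\fH$, name $\mathring{T}_\Theta$ from Proposition~\ref{prop:equivfNTh} as the candidate, and then declare that ``the real work is the existence of such an extension'' without doing it. That existence claim is never established, and the tools you cite do not deliver it: Proposition~\ref{prop:equivfNTh} is stated for an \eubp and places $\mathring{T}_\Theta$ only inside $A_\star=\dom\ol{\Gamma}$, not inside $A_*=\dom\Gamma$; for a mere \ibp the inclusion $T\subseteq A_*$ is not guaranteed at all (only $\ker\Gamma\subseteq T=\ker\Gamma_\#$ holds), so the sandwich $T\subseteq L\subseteq A_*$ may be vacuous; and nothing in the definition of $\delta_\Gamma$ supplies a $\Theta\in\wtscrC(\fL)$ with $z,\ol{z}\in\res\mathring{T}_\Theta$ --- arranging prescribed regular points for an extension is at least as hard as the statement being proved.

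The paper's proof of \ref{item:rrzz-1o} bypasses extensions and von Neumann splittings altogether: it is a short duality argument by contradiction. Since $\ol{A_*}=T^+$, the Krein-space adjoint is $A^+_*=T$; if $\ran(A_*-\omega I)$ were a proper subset of $\fH$, its $J$-orthogonal companion $\fN_{\ol{\omega}}(A^+_*)=\fN_{\ol{\omega}}(T)$ would be nontrivial, i.e. one of $z,\ol{z}$ would lie in $\sigma^0_p(T)$ --- impossible, because $\delta_\Gamma$ is symmetrized precisely so that both conjugate points are excluded. So the symmetric exclusion that you correctly flag as essential is used directly through $A^+_*=T$, not to manufacture a decomposition $T^+=\wtT\hsum\whfN_\omega(T^+)$; Proposition~\ref{prop:rrz} and Lemma~\ref{lem:Behrndt20} play no role in the paper's proof of this corollary. (A further small slip: in your closing remark, $A_*=A_\star=A_\#$ follows from $\Gamma=\ol{\Gamma}=\Gamma_\#$, i.e. from $\Gamma$ being a boundary relation, not from $\fH$ being a Pontryagin space; the Pontryagin property only gives $\Omega_\Gamma=\bbC_*$.) As submitted, your proposal proves \ref{item:rrzz-2o} assuming \ref{item:rrzz-1o}, but does not prove \ref{item:rrzz-1o}.
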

\begin{proof}
\ref{item:rrzz-1o}
By contradiction, suppose
$(\exists z\in\delta_\Gamma)$
$\ran(A_*-z I)\subsetneq\fH$. Since
the adjoint $A^+_*=T$, this implies
$(\exists z\in\delta_\Gamma)$
$\fN_{\ol{z}}(T)\neq\{0\}$, which is false
by definition.

\ref{item:rrzz-2o} Using
$A_*\subseteq A_\star\subseteq A_\#$,
this follows from
\ref{item:rrzz-1o} and Theorem~\ref{thm:rrz}.
\end{proof}
The set $\delta_\Gamma$ admits a representation
\begin{equation}
\delta_\Gamma=\delta_T\mcap\delta_S\quad\text{where}
\quad
\delta_T:=(\bbC_*\mcap\reg T)\mcap(\bbC_*\mcap\reg T)^*
\label{eq:dT}
\end{equation}
and similarly for $\delta_S$.

Recall that, if $\fH$ is a Pontryagin space
and $T\in\wtscrC_s(\fH)$ then
$\Omega_\Gamma=\bbC_*$, and then $\delta_\Gamma$
becomes equal to
\[
\delta_T=(\bbC_*\setm\sigma^0_p(T))\mcap
(\bbC_*\setm\sigma^0_p(T))^*\,.
\]
In this setting a sufficient condition for
$\delta_T\neq\emptyset$ is given next.

The set $\sigma^0_p(T^+)$ contains the
subset (\cf \cite[Section~2.6]{Azizov79} for notation)
\[
\sigma^0_{p,2}(T^+):=\{z\in\sigma^0_p(T^+)\vrt
\ran(T^+-zI)=\fH \}\,.
\]
Letting $z\in\bbC_*\setm\sigma^0_p(T)$ we have
$\ran(T^+-\ol{z}I)=\fH$, so that
either $\ol{z}\in\sigma^0_{p,2}(T^+)$ or
$\ol{z}\in\bbC_*\mcap\res T^+$.
\begin{prop}%\label{prop:Pon}
Let $\fH$ be a Pontryagin space
and let $T\in\wtscrC_s(\fH)$ be such that
$\bbC_*\setm\sigma^0_p(T)\neq\emptyset$.
Suppose that at least one of the two sets, either
\begin{enumerate}[label=$($\alph*$)$,
ref=\alph*]
\item\label{item:Pon-a}
$\sigma^0_{p,2}(T^+)\setm\sigma^0_p(T)$ or
\item\label{item:Pon-b}
$\bbC_*\mcap\res T^+$,
\end{enumerate}
is nonempty. Then the set
$\delta_T$ is also nonempty.
\end{prop}
\begin{proof}
Let $\sigma^0_{p,1}(T^+)$ be the complement in
$\sigma^0_p(T^+)$ of $\sigma^0_{p,2}(T^+)$ and
let $\sigma^0_r(T^+)$ denote the part of
the residual spectrum of $T^+$ in $\bbC_*$. That is
\begin{align*}
\sigma^0_{p,1}(T^+):=&
\{z\in\sigma^0_p(T^+)\vrt 
\ran(T^+-zI)\subsetneq\fH \}\,,
\\
\sigma^0_r(T^+):=&
\{z\in\bbC_*\setm\sigma^0_p(T^+)\vrt
\ran(T^+-zI)\subsetneq\fH \}\,.
\end{align*}
The sets $\sigma^0_{p,1}(T^+)$,
$\sigma^0_{p,2}(T^+)$, $\sigma^0_r(T^+)$
are mutually disjoint, whose union is the total
spectrum of $T^+$ contained in $\bbC_*$.

Let $z\in\bbC_*\setm\sigma^0_p(T)$ such that
$\ol{z}\in\sigma^0_p(T)$. Then either
$z\in\Sigma_1:=\sigma^0_{p,1}(T^+)\setm\sigma^0_p(T)$
or $z\in\Sigma_2:=\sigma^0_r(T^+)\setm\sigma^0_p(T)$.
Thus if we show that the set
\[
(\bbC_*\setm\sigma^0_p(T))\setm\Sigma_{12}
\quad\text{where}\quad
\Sigma_{12}:=\Sigma_1\mcup\Sigma_2
\]
is nonempty, then by taking $z$ from this set
we must have $z\in\delta_T$.
By noting that
$(\bbC_*\setm\sigma^0_p(T))\setm\Sigma_{12}$
is the union of sets in
\eqref{item:Pon-a} and \eqref{item:Pon-b},
this accomplishes the proof of the proposition.
\end{proof}
The example with $\delta_T\neq\emptyset$
is a standard relation in a Pontryagin space.
\begin{exam}\label{exam:Pstan}
Let $\fH$ be a Pontryagin space with $\kappa$
negative squares and $T\in\wtscrC_s(\fH)$.
Then $T$ is called \textit{standard} if $(\exists z\in\bbC_*)$
$\ol{z}$, $z\notin\sigma^0_p(T)$, in which case
$\sigma^0_p(T)$ is at most $2\kappa$-dimensional
\cite{Azizov03}. Thus, a standard relation $T$
is the one for which the set $\delta_T$ is nonempty.
Recall also that $T\in\wtscrC_s(\fH)$ is called
\textit{simple}, if it has no non-real eigenvalues and
the closed linear span
\[
\ol{\spn}\{\fN_z(T^+)\vrt z\in\bbC_*\}=\fH\,.
\]
A simple $T\in\wtscrC_s(\fH)$ is a
standard operator with $\sigma_p(T)=\emptyset$
\cite[Proposition~2.4]{Azizov03}.
Therefore, $\delta_T=\bbC_*$ for a simple operator $T$.

Let
\[
\cN:=\mul T^+\mcap\spn
\{\ker(T-zI)^\kappa\vrt z\in\sigma_p(T) \}
\]
so that $\cN=\{0\}$ if \eg
$\kappa=0$ or $T$ is simple or $T$ is densely defined.
According to
\cite[Theorem~3.7]{Azizov03}, if $T$ is a standard operator
then there is a closed $\wtT\in\mrm{Ext}_T$
such that \eqref{eq:TpfN} holds for $z\in\delta_T$
(and is termed a \textit{generalized von Neumann formula})
iff $\cN=\{0\}$; see also
\cite[Lemmas~3.3(b), 3.6]{Azizov03}.
For a standard operator $T\subsetneq T^+$ with $\cN=\{0\}$,
the set in \eqref{item:Pon-a} is nonempty.
\end{exam}
The last corollary of Theorem~\ref{thm:rrz}
that we mention explicitly in this section assumes $S=T$
(\ie $\Omega_\Gamma$ is symmetric)
and that the defect subspaces of $T$
are finite-dimensional. We remark that
$S=T$ does not necessarily imply that
$\ol{\Gamma}$ is unitary (but it is, if \eg
$\ran\Gamma_\#=\ran\ol{\Gamma}$ too). Let
\[
n_z:=\dim\fN_{\ol{z}}(T^*)
=\dim\fN_{\ol{z}}(T^+)
\]
for some $z\in\bbC_*$. We do not require
$n_z$ to be constant, but only that $n_z<\infty$
for some $z$. Recall in this connection that, if
$\reg T\neq\emptyset$ then for every
$z$ from each connected component of $\reg T$,
$n_z$ is constant \cite{Jonas95,Gokhberg57}.
For example, if $\fH$ is a Pontryagin space,
if $T$ is standard,
and if $n_z$, $n_{\ol{z}}<\infty$ for
$z\in\bbC_+\mcap\delta_T$,
then $n_z$ is constant
for all $z\in\bbC_\pm$, with the exception of at most
finitely many points
\cite[Theorem~2.3]{Azizov03}.
\begin{cor}\label{cor:rrz}
Assuming the hypotheses of Theorem~\ref{thm:rrz}
suppose in addition that
$S=T$ and $n_z$, $n_{\ol{z}}<\infty$ for
$z\in \Omega_\Gamma\neq\emptyset$.
Then \eqref{eq:MGG} holds and
$M_{\ol{\Gamma}}(z)\in\wtscrC(\fL)$.
\end{cor}
\begin{proof}
Fix some $z\in\Omega_\Gamma$ and let
\[
\fH_z:=\ran(T-z I)\,,\quad
\fX_z:=\ran(A_\star-z I)\,,\quad
\fH^\prime_z:=\ran(T^+-z I)\,.
\]
By hypotheses $\fH_z$ and $\fH^\prime_z$
are Hilbert subspaces of $(\fH,[\cdot,J\cdot])$.
The closeness of $\fX_z$
is equivalent to the closeness of
\[
\hat{\fX}_z:=
\fX_z\mcap(\fH^\prime_z\om\fH_z)\,.
\]
Indeed,
since $\fH_z\subseteq\fX_z$,
every element from $\fX_z$ is the unique sum
of an element from $\fH_z$ and an element
from $\hat{\fX}_z$.
Since $\fH_z$ is closed, the closeness
of $\hat{\fX}_z$ therefore implies that of $\fX_z$.
The converse implication is clear.
Since $n_z<\infty$, the subspace
\[
\fH^\prime_z\om\fH_z=
\fH^\prime_z\mcap(\fH\om\fH_z)=
\fH^\prime_z\mcap\fN_{\ol{z}}(T^*)
\]
is finite-dimensional, hence closed, and
then $\fX_z$ is also closed.
The proof of the closeness of
$\ran(A_\#-\ol{z} I)$ is similar.
\end{proof}
Next we give conditions such that
\eqref{eq:MMMGGGs} holds.
Specifically,
Lemmas~\ref{lem:clVTan0001}, \ref{lem:Behrndt20}
and Corollary~\ref{cor:projP1} lead to Theorem~\ref{thm:PzfN}.
\begin{thm}\label{thm:PzfN}
Let $T\in\wtscrC_s(\fH)$,
let $(\fL,\Gamma)$ be an \ibp for $T^+$
and suppose \eqref{eq:TpfN} holds for
some $\wtT\in\scrC(\fH)$, $\wtT\subseteq T^+$,
and $z\in\bbC$.
\begin{enumerate}[label=\arabic*$^\circ$,
ref=\arabic*$^\circ$]
\item\label{item:PzfN-1o}
If $\wtT\subseteq A_\star$ and $z\neq0$ and
\begin{equation}
\fN_z(T^+)\mcap\ran(A_*+\frac{1}{\ol{z}} I)=\fN_z(A_*)
\label{eq:cAT}
\end{equation}
then $\ol{\fN_z(A_*)}=\ol{\fN_z(A_\star)}$;
if additionally $\Gamma$ is a closable operator
with $A_\star=T^+$ then
there is the equality in \eqref{eq:Gzineq}.
\item\label{item:PzfN-2o}
If $z\in\bbC\setm\sigma_p(\ol{\wtT})\neq\emptyset$
then for every $\wtT_1\in\scrC(\fH)$ such that
$\wtT\subseteq\wtT_1$ and $\ol{\wtT_1}=T^+$
it holds $\ol{\fN_z(\wtT_1)}=\fN_z(T^+)$.
\end{enumerate}
\end{thm}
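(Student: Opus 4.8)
The plan is to prove the two items by different routes sharing one starting point. In both I begin from \eqref{eq:TpfN}: by Lemma~\ref{lem:Behrndt20} it is equivalent to $\ran(\wtT-zI)=\ran(T^+-zI)$, and since $\wtT\subseteq L_1\subseteq T^+$ squeezes the ranges, Lemma~\ref{lem:projP1} yields the von Neumann decomposition $L_1=\wtT\hsum\whfN_z(L_1)$ for every $L_1$ with $\wtT\subseteq L_1\subseteq T^+$; I will apply this to $L_1=\ol{\wtT},A_\star,\wtT_1$, and (by the same range argument, now with $\ol{\wtT}$ in the role of $L$) record $T^+=\ol{\wtT}\hsum\whfN_z(T^+)$.

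For \ref{item:PzfN-2o} the point is to make the last decomposition \emph{topologically} direct. The hypothesis $z\notin\sigma_p(\ol{\wtT})$ gives that the sum is \emph{algebraically} direct, since any common element $(f,zf)\in\ol{\wtT}\mcap\whfN_z(T^+)$ has $f\in\fN_z(\ol{\wtT})=\{0\}$; as $\ol{\wtT}$ and $\whfN_z(T^+)$ are closed and their sum $T^+$ is closed, the algebraic projection $P$ of $T^+$ onto $\ol{\wtT}$ along $\whfN_z(T^+)$ is bounded. The inclusion $\ol{\fN_z(\wtT_1)}\subseteq\fN_z(T^+)$ is immediate from $\wtT_1\subseteq T^+$. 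For the converse, given $\whf_z\in\whfN_z(T^+)$ I use $\ol{\wtT_1}=T^+$ to pick $\whg_n\in\wtT_1$ with $\whg_n\to\whf_z$, and decompose $\whg_n=\whh_n+\whxi_n$ through $\wtT_1=\wtT\hsum\whfN_z(\wtT_1)$; since $\whh_n\in\wtT\subseteq\ol{\wtT}$ and $\whxi_n\in\whfN_z(\wtT_1)\subseteq\whfN_z(T^+)$, this is simultaneously the decomposition in $T^+=\ol{\wtT}\hsum\whfN_z(T^+)$, so $P(\whg_n-\whf_z)=\whh_n\to0$ by continuity (using $P\whf_z=0$). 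Hence $\whxi_n\to\whf_z$ with $\whxi_n\in\whfN_z(\wtT_1)$, giving $\fN_z(T^+)\subseteq\ol{\fN_z(\wtT_1)}$ and the desired equality $\ol{\fN_z(\wtT_1)}=\fN_z(T^+)$. The decisive step here is the passage from algebraic to topological directness, which is exactly what $z\notin\sigma_p(\ol{\wtT})$ buys.

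For \ref{item:PzfN-1o} I would first read \eqref{eq:cAT} geometrically. Writing $Q:=P_{zI}$ for the orthogonal projection of $\fH^2$ onto the closed subspace $zI$, a direct computation of $Q(g,g')$ shows (using $z\neq0$) that $Q(A_*)=\{(f,zf)\vrt f\in\ran(A_*+\frac{1}{\ol{z}}I)\}$, so that \eqref{eq:cAT} is precisely the statement $\whfN_z(A_*)=Q(A_*)\mcap T^+$, its nontrivial content being the inclusion $Q(A_*)\mcap T^+\subseteq\whfN_z(A_*)$. Together with $A_\star=\wtT\hsum\whfN_z(A_\star)$ from the common step (this is where $\wtT\subseteq A_\star$ enters), the target $\ol{\fN_z(A_*)}=\ol{\fN_z(A_\star)}$ reduces to $\whfN_z(A_\star)\subseteq\ol{\whfN_z(A_*)}$. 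To reach it I approximate $\whf_z\in\whfN_z(A_\star)\subseteq T^+=\ol{A_*}$ by $\whg_n\in A_*$ and project: $Q\whg_n\in Q(A_*)$ and $Q\whg_n\to\whf_z$ by continuity.

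\textbf{The hard step}, and the one I expect to be the genuine obstacle, is that $Q\whg_n$ need \emph{not} lie in $T^+$, so \eqref{eq:cAT} does not yet place it in $\whfN_z(A_*)$: the bare density argument only delivers $\whf_z\in\ol{Q(A_*)}\mcap T^+$, whereas one needs $\whf_z\in\ol{Q(A_*)\mcap T^+}$, and closure does not commute with intersecting the closed subspace $T^+$. This is exactly where \eqref{eq:TpfN} with $\wtT\subseteq A_\star$ must be fed in, by decomposing each approximant through $T^+=\wtT\hsum\whfN_z(T^+)$ and controlling its $\wtT$-component so that the projected approximants are corrected back into $T^+$ (equivalently into $\whfN_z(T^+)$), at which point \eqref{eq:cAT} identifies them as elements of $\whfN_z(A_*)$; overcoming this closure/intersection mismatch is the crux of \ref{item:PzfN-1o}. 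Finally, once the first assertion holds and $A_\star=T^+$ with $\Gamma$ a closable operator, the second assertion follows from Lemma~\ref{lem:clVTan0001} applied with $V=\Gamma$ and the closed relation $zI$ in the role of $T$: the closure $\ol{\Gamma}$ then has closed domain $A_\star=T^+$, and the coincidence $\ol{\fN_z(A_*)}=\fN_z(T^+)$ supplies the density that, via Lemma~\ref{lem:Pop}, upgrades $\ol{\Gamma_z}\subseteq(\ol{\Gamma})_z$ to the equality in \eqref{eq:Gzineq}.
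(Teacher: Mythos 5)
Your item \ref{item:PzfN-2o} is correct, and it takes a genuinely different route from the paper's. After the common reductions $T^+=\ol{\wtT}\hsum\whfN_z(T^+)$ and $\wtT_1=\wtT\hsum\whfN_z(\wtT_1)$ (Lemmas~\ref{lem:Behrndt20}, \ref{lem:projP1}), the paper argues through the minimal angle between subspaces, citing \cite{Schochetman01}: closedness of $T^+$ forces the angle between $\ol{\wtT}$ and $\whfN_z(T^+)$ to be less than $1$, the bound persists for the smaller subspace $\ol{\whfN_z(\wtT_1)}$, hence $\ol{\wtT}\hsum\ol{\whfN_z(\wtT_1)}$ is closed, equals $T^+$, and the two direct decompositions of $T^+$ are compared. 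Your device instead is the boundedness of the algebraic projection $P$ of the Hilbert space $T^+$ onto $\ol{\wtT}$ along $\whfN_z(T^+)$ (closed graph theorem), pushing approximants $\whg_n\in\wtT_1$ of $\whf_z$ through it, so that $\whh_n=P\whg_n\to P\whf_z=0$ and $\whxi_n\to\whf_z$. Positive angle and bounded projection are equivalent expressions of the same topological directness, so the two proofs rest on the same principle; yours is self-contained, avoids the external closedness criterion, and is arguably cleaner.

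Item \ref{item:PzfN-1o}, by contrast, contains a genuine gap, and it is precisely the step you flagged as ``the hard step'': you describe what must be done (correct the projected approximants back into $T^+$ by controlling their $\wtT$-components through \eqref{eq:TpfN}) but supply no mechanism for doing it, and that correction carries the entire content of the claim. The source of the difficulty is your choice of projection. You project onto the full graph subspace $zI$, via $Q=P_{zI}$, and then $Q\whg_n$ indeed escapes $T^+$, leaving you with $\whf_z\in\ol{Q(A_*)}\mcap T^+$ instead of $\whf_z\in\ol{Q(A_*)\mcap T^+}$. The paper projects instead onto $\whfN_z(T^+)=zI\mcap T^+$: writing $P_z$ for the orthogonal projection of $\fH^2$ onto $\whfN_z(T^+)$, its matrix form involves $\pi_z$, the projection of $\fH$ onto $\fN_z(T^+)$, and the paper's computation identifies the invariance $P_zA_*\subseteq A_*$ (condition \eqref{eq:PTequiv} with $\whfN_z(T^+)$ in the role of the closed subspace) with \eqref{eq:cAT}, for $z\neq0$. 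With this projection your own approximation scheme closes at once: for $\whg_n\in A_*$ with $\whg_n\to\whf_z\in\whfN_z(A_\star)$ one has
\[
P_z\whg_n\in\whfN_z(T^+)\mcap A_*=\whfN_z(A_*)\,,\quad
P_z\whg_n\lto P_z\whf_z=\whf_z\,,
\]
since $\whfN_z(A_\star)\subseteq\whfN_z(T^+)$, and no correction step is needed. The hypotheses \eqref{eq:TpfN} and $\wtT\subseteq A_\star$ enter the paper's argument as condition $(V)$ of Lemma~\ref{lem:clVTan0001}, namely $T^+=A_\star\hsum\whfN_z(T^+)$, obtained from \eqref{eq:TpfN} by the squeeze through Lemmas~\ref{lem:Behrndt20} and \ref{lem:projP1} — not in the reduction of the target, which is trivial since $\fN_z(A_*)\subseteq\fN_z(A_\star)$.

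The same mischosen subspace undermines your concluding sentence. Applying Lemma~\ref{lem:clVTan0001} ``with $V=\Gamma$ and the closed relation $zI$ in the role of $T$'' makes its hypothesis \eqref{eq:PTequiv} read $P_{zI}A_*\subseteq A_*$ — exactly the $Q$-invariance your hard-step paragraph concedes is unavailable — and makes condition $(V)$ demand that $zI\hsum A_\star$ be closed, \ie that $\ran(A_\star-zI)$ be closed (compare the proof of Theorem~\ref{thm:rrz}), which is not among the hypotheses of the theorem. The correct instantiation is again $\whfN_z(T^+)$ in the role of $T$: then the ``moreover'' clause of Lemma~\ref{lem:clVTan0001} applies, because $\ol{\Gamma}$ is an operator with closed domain $A_\star=T^+$, hence bounded by the closed graph theorem. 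Your Lemma~\ref{lem:Pop} idea is salvageable in exactly this way: the first claim gives $\ol{\whfN_z(A_*)}=\whfN_z(T^+)$, boundedness of $\ol{\Gamma}$ then yields $\dom\ol{\Gamma_z}=\whfN_z(T^+)=\dom(\ol{\Gamma})_z$, while $\mul\ol{\Gamma_z}=\{0\}=\mul(\ol{\Gamma})_z$ since $\ol{\Gamma_z}\subseteq\ol{\Gamma}$, so Lemma~\ref{lem:Pop} upgrades \eqref{eq:Gzineq} to an equality.
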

\begin{proof}
\ref{item:PzfN-1o}
This is an application of Lemma~\ref{lem:clVTan0001}
by noting that \eqref{eq:TpfN} holds with $\wtT=A_\star$
by Lemma~\ref{lem:Behrndt20}, and that
$A_\star\hsum\whfN_z(T^+)$ is closed
(condition \ref{item:V}) iff it equals
$T^+$.

Let $P_z$ be an orthogonal projection
in $\fH^2$ onto $\whfN_z(T^+)$. We
show $P_zA_*\subseteq A_*$ for $z\in\bbC\setm\{0\}$
iff \eqref{eq:cAT} holds.
Conditions $P^*_z=P_z=P^2_z$ imply that
$P_z$ has the matrix form
\[
P_z=\begin{pmatrix}
\displaystyle\frac{\pi_z}{\abs{z}^2+1}
& \displaystyle\frac{\ol{z}\pi_z}{\abs{z}^2+1}
\\
\displaystyle\frac{z\pi_z}{\abs{z}^2+1}
& \displaystyle\frac{\abs{z}^2\pi_z}{\abs{z}^2+1}
\end{pmatrix}
\]
where $\pi_z$ is an orthogonal projection
in $\fH$ onto $\fN_z(T^+)$.
Therefore $P_zA_*\subseteq A_*$ iff
\[
\pi_z\ran(A_*-\mu I)\subseteq
\fN_z(A_*)\quad\text{where}\quad
\mu:=-\frac{1}{\ol{z}}\,,\quad
z\neq0
\]
or, equivalently (see \eqref{eq:nftr}), iff
\[
\fN_z(T^+)\mcap\ran(A_*-\mu I)=
\fN_z(A_*)\mcap\ran(A_*-\mu I)\,.
\]
The set on the right
consists of $f^\prime-\mu f$ such that
\[
(f,f^\prime)\in A_*\,,\quad
(f^\prime-\mu f,zf^\prime-\mu zf)\in A_*\,.
\]
That is
\[
(\forall\lambda\neq\mu)\;
\fN_z(A_*)\mcap\ran(A_*-\mu I)\supseteq
\fN_\lambda(A_*)\mcap\fN_z(A_*)\,.
\]
By noting that $z\neq\mu$ and putting
$\lambda=z$ it follows that
\[
\fN_z(A_*)\mcap\ran(A_*-\mu I)=\fN_z(A_*)\,.
\]

\ref{item:PzfN-2o}
By Lemma~\ref{lem:Behrndt20} and Corollary~\ref{cor:projP1}
\[
T^+=\ol{\wtT}\hsum\whfN_z(T^+)\,,\quad
\wtT_1=\wtT\hsum\whfN_z(\wtT_1)\,.
\]
Since
$\ol{\wtT}\hsum\whfN_z(T^+)$ is closed,
the angle, say $\theta$, between subspaces
$\ol{\wtT}$ and $\whfN_z(T^+)$ is less than $1$
(\eg \cite[Theorem~2.1]{Schochetman01}).
Since
\[
\ol{\wtT}\mcap\whfN_z(T^+)=\whfN_z(\ol{\wtT})=\{0\}
\]
the angle between subspaces $\ol{\wtT}$ and
$\ol{\whfN_z(\wtT_1)}\subseteq\whfN_z(T^+)$ is
less than or equal to $\theta<1$, and then the
relation
$\ol{\wtT}\hsum\ol{\whfN_z(\wtT_1)}$ is also closed.
Then
\[
\ol{\wtT}\hsum\ol{\whfN_z(\wtT_1)}=\ol{\wtT_1}=T^+=
\ol{\wtT}\hsum\whfN_z(T^+)
\]
which shows that $\ol{\whfN_z(\wtT_1)}=\whfN_z(T^+)$.
\end{proof}
The example in a Hilbert space setting, where
$\Pi_\Gamma$ is an \ibt for $T^*$ such that
$\Pi_{\ol{\Gamma}}$ is an \obt for $T^*$
and $\fN_z(A_*)=\fN_z(T^*)$,
can be found in \cite{Jursenas18}. But
observe that generally
an \ibt $\Pi_{\ol{\Gamma}}$ for $T^+$ with
$A_\star=T^+$ need not be an \obt
If $(\fL,\Gamma)$ is an \ibp for $T^+$
and if $\fN_z(A_*)=\fN_z(A_\star)$, then
by Lemma~\ref{lem:Pop}
there is the equality in \eqref{eq:Gzineq} iff
$\mul\ol{\Gamma_z}=\mul\ol{\Gamma}$.

Under suitable circumstances a necessary
and sufficient condition for the equality
$\fN_z(A_*)=\fN_z(T^+)$ can be extracted
from the following proposition.
\begin{prop}
Let $L\subseteq R$ be relations in a linear set $\fH$.
Suppose that
\[
\mul R\subseteq\ran L\,,\quad
\dom(L\mcap\fX)=\ker L
\]
where
\[
\fX:=(\dom L\mcap\ker R)\times\mul R\,.
\]
Then
\[
\fN_z(R)\mcap\dom L=\fN_z(L)\,,\quad z\in\bbC\,.
\]
\end{prop}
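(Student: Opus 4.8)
The plan is to establish the nontrivial inclusion $\fN_z(R)\mcap\dom L\subseteq\fN_z(L)$, since the reverse is immediate: from $L\subseteq R$, any $(f,zf)\in L$ has $f\in\dom L$ and $(f,zf)\in R$, so $\fN_z(L)\subseteq\fN_z(R)\mcap\dom L$. I will use throughout that $\fN_w(S)=\{f\vrt(f,wf)\in S\}$ for a relation $S$, so that membership of $f$ in $\fN_z(R)\mcap\dom L$ means exactly that $f\in\dom L$ together with $(f,zf)\in R$.

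First I would fix such an $f$ and pick some $g$ with $(f,g)\in L$, which exists because $f\in\dom L$. The goal then reduces to showing $zf-g\in\mul L$, for then $(f,g)+(0,zf-g)=(f,zf)\in L$ gives $f\in\fN_z(L)$. Since $(f,g)\in L\subseteq R$ and $(f,zf)\in R$, subtracting yields $(0,zf-g)\in R$, i.e.\ $zf-g\in\mul R$; the first hypothesis $\mul R\subseteq\ran L$ then produces an auxiliary element $h$ with $(h,zf-g)\in L$. The point on which the whole argument turns is that this $h$ automatically lies in $\ker R$: combining $(h,zf-g)\in L\subseteq R$ with $(0,zf-g)\in R$ yields $(h,0)\in R$.

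With $h\in\dom L\mcap\ker R$ and $zf-g\in\mul R$, the pair $(h,zf-g)$ lies in $L\mcap\fX$, so $h\in\dom(L\mcap\fX)$; the second hypothesis $\dom(L\mcap\fX)=\ker L$ then forces $(h,0)\in L$, and subtracting from $(h,zf-g)\in L$ gives $(0,zf-g)\in L$, that is $zf-g\in\mul L$, exactly as required. I expect the only real subtlety to be the verification that $h\in\ker R$: it is precisely this step that makes the second hypothesis applicable and thereby promotes $zf-g$ from $\mul R$ to $\mul L$, whereas everything else is routine manipulation of sums in the linear relations $L$ and $R$.
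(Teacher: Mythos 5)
Your proof is correct and follows essentially the same route as the paper's: both extract $zf-g\in\mul R$ from $(f,g)\in L$ and $(f,zf)\in R$ (the paper phrases this via $R\vrt_{\dom L}=L\hsum(\{0\}\times\mul R)$), lift it to $\ran L$ by the first hypothesis, and use the second hypothesis to put the auxiliary element in $\ker L$. Your explicit check that $h\in\ker R$ is exactly the observation the paper absorbs into the reformulation $\dom(L\mcap\fX)=\ker L\Leftrightarrow\dom(L\mcap(\fH\times\mul R))\subseteq\ker L$.
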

\begin{proof}
Let $f\in\fN_z(R)\mcap\dom L$.
We have
\[
\fN_z(R)\mcap\dom L=\fN_z(R\vrt_{\dom L})\,,\quad
R\vrt_{\dom L}=L\hsum(\{0\}\times\mul R)
\]
so $(f,zf+h)\in L$ for some $h\in\mul R$. By
hypothesis $\mul R\subseteq\ran L$,
$(\exists g)$ $(g,h)\in L$ so
\[
(f,zf+h)=(f,zf)+(g,h)-(g,0)
\]
and then
\[
(f,zf)-(g,0)\in L\,.
\]
By hypothesis $\dom(L\mcap\fX)=\ker L$ or,
equivalently,
\[
\dom(L\mcap(\fH\times\mul R))\subseteq\ker L
\]
this gives $f\in\fN_z(L)$.
\end{proof}
Let $\fM$, $\fN$ be linear subsets of
$\fH$. The following example illustrates that
a combination of Lemma~\ref{lem:clVTan0001}
with \eqref{eq:cAT} can be useful for establishing
the equality in the inclusion
$\ol{\fM\mcap\fN}\subseteq\ol{\fM}\mcap\ol{\fN}$.
\begin{exam}
Let $(\fH,[\cdot,\cdot])$ be a Krein space
and let $\fL=(\fH,[\cdot,J\cdot])$, a Hilbert space.
Let $\fM$, $\fN$ be linear subsets of $\fH$.
\begin{enumerate}[label=\arabic*$^\circ$,
ref=\arabic*$^\circ$]
\item\label{item:ex-1o}
If $(I-J)\fH\subseteq\fN^{[\bot]}\subseteq\ol{\fM}$
then the pair $(\fL,\Gamma)$, with
\[
\Gamma:=\{((f,f^\prime),(f+l,f^\prime))\vrt
(f,f^\prime)\in\fM\times\fN\,;\,
l\in\fN^{[\bot]} \}\,,
\]
is an \ibp for the adjoint
$T^+=\ol{\fM}\times\ol{\fN}$ of
$T=\fN^{[\bot]}\times\fM^{[\bot]}\in\wtscrC_s(\fH)$.
It holds $A_*=\fM\times\fN$, $A_\star=T^+$, and
moreover
\[
\fN_z(A_*)=\fM\mcap\fN\,,\quad
\fN_z(T^+)=\ol{\fM}\mcap\ol{\fN}
\]
for all $z\in\bbC\setm\{0\}$.
\item\label{item:ex-2o}
If
$(\fM\mcap\ol{\fN})+(\ol{\fM}\mcap\fN)=\fM\mcap\fN$
then
$\ol{\fM\mcap\fN}=\ol{\fM}\mcap\ol{\fN}$.
\item\label{item:ex-3o}
Under hypotheses of \ref{item:ex-1o},
\ref{item:ex-2o}, if in addition
$\fN$ is dense in $\fH$, then necessarily $J=I$ and
moreover
there is the equality in \eqref{eq:Gzineq}
for $z\in\bbC\setm\{0\}$.
\end{enumerate}
\begin{proof}
\ref{item:ex-1o}
By the definition of $\Gamma$ it holds
\[
\Gamma_\#=
\{((Jf+l,f^\prime+l^\prime),(f,Jf^\prime))\vrt
(f,f^\prime)\in\fH\times\ol{\fN}\,;\,
(l,l^\prime)\in\fN^{[\bot]}\times\fM^{[\bot]} \}
\]
and
\[
\ol{\Gamma}=
\{((f,f^\prime),(f+l,f^\prime))\vrt
(f,f^\prime)\in\ol{\fM}\times\ol{\fN}\,;\,
l\in\fN^{[\bot]} \}\,.
\]

By hypothesis $(I-J)\fH\subseteq\fN^{[\bot]}$,
if $f\in\fH$ and $l\in\fN^{[\bot]}$
then
\[
Jf+l=f+l_1\,,\quad
l_1:=l+(J-I)f\in\fN^{[\bot]}\,.
\]
Moreover, if $f^\prime\in\ol{\fN}$
($\subseteq(I+J)\fH$) then
$Jf^\prime=f^\prime$. Thus $\Gamma_\#$ reduces to
\[
\Gamma_\#=
\{((f,f^\prime),(f+l,f^\prime+l^\prime))\vrt
(f,f^\prime)\in\fH\times\ol{\fN}\,;\,
(l,l^\prime)\in\fN^{[\bot]}\times\fM^{[\bot]} \}
\]
and then $\Gamma_\#\supseteq\Gamma$.

By hypothesis $\fN^{[\bot]}\subseteq\ol{\fM}$
or, equivalently, $\fM^{[\bot]}\subseteq\ol{\fN}$,
we have $T\in\wtscrC_s(\fH)$. The remaining statements
are straightforward.

\ref{item:ex-2o}
This is an application of
Lemma~\ref{lem:clVTan0001} and \eqref{eq:cAT},
since
\[
\ran(A_*-zI)=\fM+\fN
\]
for all $z\in\bbC\setm\{0\}$.

\ref{item:ex-3o}
Since $\fN^{[\bot]}=\{0\}$, it is necessary
by \ref{item:ex-1o}
that $J=I$. Moreover $\Gamma$ is a closable
operator with $A_\star=T^+=T^*$, since
$\mul\ol{\Gamma}=\fN^{[\bot]}$; hence the
last statement of
Theorem~\ref{thm:PzfN}-\ref{item:PzfN-1o} applies.
\end{proof}
\end{exam}
\begin{rem}
We remark that in the above example
the hypothesis in \ref{item:ex-1o} is not necessary
for establishing $\ol{\Gamma_z}=(\ol{\Gamma})_z$.
Namely, one verifies
$\ol{\Gamma_z}=(\ol{\Gamma})_z$ by
direct computation, by using
$\Gamma_z=I_{zI_{\fM\mcap\fN}}$
and $(\ol{\Gamma})_z=I_{zI_{\ol{\fM}\mcap\ol{\fN}}}$,
and then by applying \ref{item:ex-2o}.
But then $(\fL,\Gamma)$ generally would not be a \bp
\end{rem}
We give a comment related to
Theorem~\ref{thm:PzfN}-\ref{item:PzfN-2o}
in case $\wtT_1=A_*$. For this, we first recall that,
if $(\fL,\Gamma)$ is a
\bp then the main transform, $\cJ(\Gamma)$,
of $\Gamma$ is a relation in $\fH\times\fL$
defined by
(\cite[Eq.~(2.16)]{Derkach06},
\cite[Eq.~(3.5)]{Behrndt11})
\begin{equation}
\cJ(\Gamma):=
\{((f,l),(f^\prime,-l^\prime))\vrt
(\whf,\whl)\in\Gamma \}\,,\quad
\whf=(f,f^\prime)\,,\quad\whl=(l,l^\prime)\,.
\label{eq:mrTG}
\end{equation}
If $\Gamma$ is a \br for $T^+$ then $\cJ(\Gamma)$ is a
self-adjoint extension to $\fH\times\fL$ of
$T=\ker\Gamma$, in the sense that
\[
\cJ(\Gamma)\mcap(\fH\times\{0\})^2=
\{((f,0),(f^\prime,0))\vrt\whf\in T \}\,.
\]
This shows in particular that
$\sigma_p(T)\subseteq\sigma_p(\cJ(\Gamma))$,
and therefore in order to have
$\res\cJ(\Gamma)\neq\emptyset$ it is necessary
(although generally not sufficient) that
$\sigma_p(T)\subsetneq\bbC$.
Note also that
$\bbC_*\mcap\res\cJ(\Gamma)\subseteq\Omega_\Gamma$.
We examine $\res\cJ(\Gamma)$ in more detail
in the next section.

In a Hilbert space setting
the equality in
\begin{equation}
\ol{\fN_z(A_*)}\subseteq\fN_z(T^+)
\label{eq:fNzineqq}
\end{equation}
for $z\in\bbC_*$ is shown in
\cite[Proposition~3.9(i)]{Derkach17},
provided $T_0$ is a self-adjoint relation,
and is therefore a consequence of the von Neumann formula
(put $\wtT=T_0$ and $\wtT_1=A_*$
in Theorem~\ref{thm:PzfN}-\ref{item:PzfN-2o}).

Let $\fH$ be a Pontryagin space, $T\in\wtscrC_s(\fH)$
an operator, and $\Gamma$ a \br
for $T^+$. Under the present circumstances
the equality in \eqref{eq:fNzineqq}
for some $z\in\bbC$ is stated in
\cite[Corollary~3.14(i)]{Behrndt11}
by using the idea from \cite[Lemma~2.14(ii)]{Derkach06}.
That is, the equality holds for
$z\in\res\cJ(\Gamma^\prime)$, where
$\Gamma^\prime$ is a \br for $T^+$ such that
$\dom\Gamma^\prime=A_*$ and
$\res\cJ(\Gamma^\prime)\neq\emptyset$.
In Section~\ref{sec:resTG} we find $\Gamma^\prime$
with $\res\cJ(\Gamma^\prime)$
intersecting $\delta_\Gamma=\delta_T$
(see Corollary~\ref{cor:rrzz} and \eqref{eq:dT}),
so that there is
the equality in \eqref{eq:fNzineqq}
for some $z\in\delta_T$.
%%%%%%%%%%%%%%%%%%%%%%%%%%%%%%%%%%%%%%%%%%%%%%%%%%%%%%%%%%%%%%
%%%%%%%%%%%%%%%%%%%%%%%%%%%%%%%%%%%%%%%%%%%%%%%%%%%%%%%%%%%%%%
%%%%%%%%%%%%%%%%%%%%%%%%%%%%%%%%%%%%%%%%%%%%%%%%%%%%%%%%%%%%%%
%%%%%%%%%%%%%%%%%%%%%%%%%%%%%%%%%%%%%%%%%%%%%%%%%%%%%%%%%%%%%%
\section{On the resolvent set of the
main transform of a boundary relation}\label{sec:resTG}
%%%%%%%%%%%%%%%%%%%%%%%%%%%%%%%%%%%%%%%%%%%%%%%%%%%%%%%%%%%%%%
%%%%%%%%%%%%%%%%%%%%%%%%%%%%%%%%%%%%%%%%%%%%%%%%%%%%%%%%%%%%%%
%%%%%%%%%%%%%%%%%%%%%%%%%%%%%%%%%%%%%%%%%%%%%%%%%%%%%%%%%%%%%%
%%%%%%%%%%%%%%%%%%%%%%%%%%%%%%%%%%%%%%%%%%%%%%%%%%%%%%%%%%%%%%
As in the previous section, $\fH$
is a Krein space, $\fL$ a Hilbert space,
$T\in\wtscrC_s(\fH)$. Let $(\fL,\Gamma)$
be a \ubp for $T^+$ with Weyl family $M_\Gamma$.
Then $\Omega_\Gamma$
consists of $z\in\bbC_*$ such that
$\ran(T-zI)$ and $\ran(T-\ol{z}I)$ are
subspaces. Relying on Theorem~\ref{thm:rrz},
in order that $M_\Gamma(z)$ be a closed relation
one would look for $z$ from $\Omega_\Gamma$. Let
\[
\cO_\Gamma:=\{z\in\Omega_\Gamma
\setm\sigma^0_p(T) \vrt
\ran(A_*-zI)=\fH \}
\]
and let $\delta_T$ be as defined in \eqref{eq:dT}.
Then by Corollary~\ref{cor:rrzz}
$\cO_\Gamma\mcap\cO^*_\Gamma=\delta_T$.

Let $\cJ(\Gamma)$ be the main transform of a \br $\Gamma$;
see \eqref{eq:mrTG}.
One verifies straightforwardly that
$\bbC_*\mcap\res\cJ(\Gamma)\subseteq\delta_T$.
Assuming
$\delta_T\neq\emptyset$
we show a kind of the converse inclusion,
that is, $\Gamma$ can be chosen such that
$\res\cJ(\Gamma)\neq\emptyset$.

Define the subset $\Sigma_\Gamma\subseteq\cO_\Gamma$ by
\[
\Sigma_\Gamma:=\{z\in\cO_\Gamma \vrt
0\in\res(M_\Gamma(z)+zI)\}\,.
\]
Then
\begin{lem}\label{lem:r}
$\Sigma_\Gamma\subseteq\res\cJ(\Gamma)$.
\end{lem}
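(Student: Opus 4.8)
The plan is to prove the inclusion pointwise: for a fixed $z\in\Sigma_\Gamma$ I will show that $\cJ(\Gamma)-zI$ is a bijection of the Hilbert space $\fH\times\fL$ onto itself. Since $(\fL,\Gamma)$ is a \ubp, $\Gamma=\Gamma_\#$ is a \br for $T^+$, so $\cJ(\Gamma)$ is self-adjoint in the Krein space $\fH\times\fL$ (the remark after \eqref{eq:mrTG}) and in particular closed; as the Krein and Hilbert topologies on $\fH\times\fL$ coincide, once injectivity and surjectivity of $\cJ(\Gamma)-zI$ are established the closed graph theorem delivers $(\cJ(\Gamma)-zI)^{-1}\in[\fH\times\fL]$ for free, i.e. $z\in\res\cJ(\Gamma)$. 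Thus the whole argument reduces to injectivity and surjectivity, and the single algebraic device I use repeatedly is that $\Gamma(\whu+\whf_z)=\Gamma(\whu)+\Gamma(\whf_z)$ whenever $\whu,\whf_z\in\dom\Gamma=A_*$, together with the observation that the minus sign on the second $\fL$-component in the definition \eqref{eq:mrTG} of $\cJ(\Gamma)$ is exactly what converts the $\fL$-side equation into one governed by $M_\Gamma(z)+zI$ rather than by $M_\Gamma(z)$.

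For injectivity I unwind $\fN_z(\cJ(\Gamma))$: a pair $(f,l)$ lying in it forces $\whf_z:=(f,zf)\in\whfN_z(A_*)$ and $(\whf_z,(l,-zl))\in\Gamma$, whence $(l,-zl)\in\Gamma(\whf_z)\subseteq M_\Gamma(z)$, i.e. $(l,0)\in M_\Gamma(z)+zI$. Because $z\in\Sigma_\Gamma$ gives $0\in\res(M_\Gamma(z)+zI)$, the relation $M_\Gamma(z)+zI$ is injective and $l=0$; then $(\whf_z,(0,0))\in\Gamma$ means $\whf_z\in\ker\Gamma=T$, so $f\in\fN_z(T)$, which is trivial since $z\in\cO_\Gamma\subseteq\bbC_*\setm\sigma^0_p(T)$. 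Hence $(f,l)=(0,0)$.

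For surjectivity, given $(g,k)\in\fH\times\fL$ I solve the two components in turn. First, since $z\in\cO_\Gamma$ gives $\ran(A_*-zI)=\fH$, I choose $\whu=(u,zu+g)\in A_*$ and any $\whl_u=(l_u,l_u^\prime)$ with $(\whu,\whl_u)\in\Gamma$; this disposes of the $\fH$-component. Second, I set $w:=-k-l_u^\prime-zl_u$ and $l_z:=(M_\Gamma(z)+zI)^{-1}w$, which exists by $0\in\res(M_\Gamma(z)+zI)$, and lift $(l_z,w-zl_z)\in M_\Gamma(z)$ back through $\Gamma$ to obtain some $\whf_z\in\whfN_z(A_*)$ with $(\whf_z,(l_z,w-zl_z))\in\Gamma$. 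Then $\whf:=\whu+\whf_z$ and $\whl:=\whl_u+(l_z,w-zl_z)$ satisfy $(\whf,\whl)\in\Gamma$ and, by the choice of $w$, both $f^\prime-zf=g$ and $-l^\prime-zl=k$; that is, $((f,l),(g,k))\in\cJ(\Gamma)-zI$.

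I expect the difficulty to be organizational rather than conceptual: one must keep three linear relations straight---$\Gamma$ on $\fH^2\times\fL^2$, its restriction to $\whfN_z(A_*)$, and $M_\Gamma(z)$ on $\fL^2$---and check that each sum decomposition is legitimate, in particular that no latent multivaluedness of $M_\Gamma(z)$ (or of the associated $\gamma$-field) obstructs the construction. The point that keeps the bookkeeping honest is the asymmetry between the two halves: surjectivity needs only \emph{existence} of the lift through $\Gamma$, so multivaluedness is harmless there, whereas injectivity is pinned down completely by the two scalar inputs $0\in\res(M_\Gamma(z)+zI)$ and $\fN_z(T)=\{0\}$, both of which are encoded in the hypothesis $z\in\Sigma_\Gamma$.
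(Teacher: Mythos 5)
Your proposal is correct and follows essentially the same route as the paper: the paper's Step~1 is your injectivity argument (triviality of $\ker(M_\Gamma(z)+zI)$ together with $\ker\Gamma_z=\whfN_z(T)=\{0\}$ for $z\notin\sigma^0_p(T)$), and its Step~2 is your surjectivity argument, resting on the same two inputs $\ran(M_\Gamma(z)+zI)=\fL$ and $\ran(A_*-zI)=\fH$, with self-adjointness of $\cJ(\Gamma)$ and the closed graph theorem finishing the job. The only difference is stylistic: where you construct the preimage of $(g,k)$ explicitly by first solving the $\fH$-component and then correcting the $\fL$-component via $(M_\Gamma(z)+zI)^{-1}$, the paper packages the same linear algebra as the observation that $W_z=\ran(\cJ(\Gamma)-zI)$ contains $\{0\}\times\fL$, hence is a singular relation with $\ol{W_z}=\ol{\dom}W_z\times\fL=\fH\times\fL$.
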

\begin{proof}
\textit{Step 1.}
We show that $\sigma^0_p(\cJ(\Gamma))$ does not
intersect $\Sigma_\Gamma$.

By the definition of $M_\Gamma$ one has that
\[
\ker(M_\Gamma(z)+zI)=
\{l\vrt(l,-zl)\in\dom(\Gamma^{-1}_z\vrt_{(-z)I}) \}\,,
\quad
\Gamma^{-1}_z:=(\Gamma_z)^{-1}
\]
for $z\in\bbC_*$.
Put
\[
\Sigma^\prime_\Gamma:=
\{z\in\cO_\Gamma\vrt
0\notin\sigma_p(M_\Gamma(z)+zI) \}
\]
then by the above
\[
\Sigma^\prime_\Gamma=
\{z\in\cO_\Gamma\vrt
\Gamma^{-1}_z\vrt_{(-z)I}=\{0\}\times\ker\Gamma_z \}
\]
and then by using $\ker\Gamma_z=\whfN_z(T)$ one
concludes that
\[
\Sigma^\prime_\Gamma=
\{z\in\cO_\Gamma\vrt
\Gamma^{-1}_z\vrt_{(-z)I}=\{0\} \}\,.
\]

On the other hand
\[
\fN_z(\cJ(\Gamma))=
\{(f_z,l)\vrt ((l,-zl),\whf_z)\in
\Gamma^{-1}_z\vrt_{(-z)I} \}\,,
\quad z\in\bbC
\]
hence
\[
\Sigma^\prime_\Gamma=
\cO_\Gamma\setm\sigma^0_p(\cJ(\Gamma))\,.
\]
But
$\Sigma_\Gamma\subseteq
\Sigma^\prime_\Gamma$, so
\[
\Sigma_\Gamma\subseteq
\cO_\Gamma\setm\sigma^0_p(\cJ(\Gamma))
\quad\text{and then}\quad
\Sigma_\Gamma\subseteq
\Sigma_\Gamma\setm\sigma^0_p(\cJ(\Gamma))
\]
\ie
$\Sigma_\Gamma\mcap\sigma^0_p(\cJ(\Gamma))=\emptyset$.

\textit{Step 2.}
We prove that $W_z:=\ran(\cJ(\Gamma)-zI)$ is full
for $z\in\Sigma_\Gamma$.

By noting that
\[
W_z\mcap(\{0\}\times\fL)=
\{0\}\times\ran(M_\Gamma(z)+zI)
\]
for $z\in\bbC_*$, it follows that
\[
W_z\supseteq\{0\}\times\fL
\]
for $z\in\Sigma_\Gamma$;
\ie $W_z$ for $z\in\Sigma_\Gamma$
is a singular relation from $\fH$ to $\fL$.
Then (\eg \cite{Hassi07})
$\ol{W_z}=\ol{\dom}W_z\times\fL$.
But $\dom W_z=\ran(A_*-zI)=\fH$, so
$W_z=\ol{W_z} =\fH\times\fL$.
\end{proof}
Before we state and prove the main result in the present
section we recall that, if $V$ is a standard unitary
operator in a $\whJ^\circ_\fL$-space then
$V\Gamma$ is a \br for $T^+$, and conversely;
see Section~\ref{sec:VG}
for more details. Therefore, letting ($I=I_\fL$)
\begin{equation}
V_\varepsilon:=\begin{pmatrix}
\varepsilon^{-1/2}I & 0 \\
0 & \varepsilon^{1/2}I
\end{pmatrix}\,,\quad
\varepsilon>0
\label{eq:Ve}
\end{equation}
and $\Gamma_\varepsilon:=V_\varepsilon\Gamma$
we conclude that $\Gamma_\varepsilon$ is a \br
for $T^+$. Let us define a nonempty open set
\begin{equation}
B^\varepsilon_T:=\{z\in\delta_T
\vrt\;\abs{z}>\varepsilon\}
\label{eq:Be}
\end{equation}
where $\delta_T\neq\emptyset$ is as in \eqref{eq:dT}.
Then
\begin{lem}\label{lem:r2}
$B^\varepsilon_T\subseteq\Sigma_{\Gamma_\varepsilon}$.
\end{lem}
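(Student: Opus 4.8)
The plan is to transfer the problem back to the original pair and to reduce the membership $z\in\Sigma_{\Gamma_\varepsilon}$ to a bounded-invertibility statement about $M_\Gamma(z)$. First I would record that $V_\varepsilon$ is a standard unitary operator, so that $\ker\Gamma_\varepsilon=\ker\Gamma=T$ and $\dom\Gamma_\varepsilon=\dom\Gamma=A_*$; consequently $\Omega_{\Gamma_\varepsilon}=\Omega_\Gamma$, $\cO_{\Gamma_\varepsilon}=\cO_\Gamma$ and $\delta_{\Gamma_\varepsilon}=\delta_\Gamma$. Since $B^\varepsilon_\Gamma\subseteq\delta_\Gamma=\cO_\Gamma\mcap\cO^*_\Gamma\subseteq\cO_\Gamma=\cO_{\Gamma_\varepsilon}$, the inclusion $z\in\cO_{\Gamma_\varepsilon}$ is automatic and only $0\in\res(M_{\Gamma_\varepsilon}(z)+zI)$ remains to be shown. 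Writing $M_{\Gamma_\varepsilon}(z)=V_\varepsilon M_\Gamma(z)$ and computing directly, one finds $\ker(M_{\Gamma_\varepsilon}(z)+zI)=\varepsilon^{-1/2}\ker(M_\Gamma(z)+(z/\varepsilon)I)$ and $\ran(M_{\Gamma_\varepsilon}(z)+zI)=\varepsilon^{1/2}\ran(M_\Gamma(z)+(z/\varepsilon)I)$; because $M_\Gamma(z)\in\wtscrC(\fL)$ for $z\in\delta_\Gamma$ by Corollary~\ref{cor:rrzz}, the relation $M_{\Gamma_\varepsilon}(z)+zI$ is closed, and $0\in\res(M_{\Gamma_\varepsilon}(z)+zI)$ is equivalent to the bounded invertibility of $M_\Gamma(z)+(z/\varepsilon)I$.

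Since $\Gamma=\Gamma_\#$ is a \br, Theorem~\ref{thm:rrz} gives $M_\Gamma(z)^*=M_\Gamma(\ol z)$ with both relations closed, so bijectivity of $M_\Gamma(z)+(z/\varepsilon)I$ will follow once it is shown to be bounded below (hence injective with closed range) while its adjoint $M_\Gamma(\ol z)+(\ol z/\varepsilon)I$ is injective (hence the range is dense); as $\ol z\in\delta_\Gamma$ whenever $z\in\delta_\Gamma$, the two estimates are symmetric and it suffices to treat $z$. For the lower bound I would invoke the abstract Green identity furnished by the $\whJ$-isometry of $\Gamma$: for $(l,l')\in M_\Gamma(z)$ with a defect $f_z\in\fN_z(A_*)$ (so that $(\whf_z,(l,l'))\in\Gamma$, $\whf_z=(f_z,zf_z)$) one has $\Im\braket{l,l'}=-\Im z\,[f_z,f_z]$, whence
\[
\Im\braket{l,\,l'+(z/\varepsilon)l}=-\Im z\,\bigl([f_z,f_z]+\varepsilon^{-1}\norm l^2\bigr),
\]
and therefore, by the Cauchy--Schwarz inequality,
\[
\norm{l'+(z/\varepsilon)l}\,\norm l\geq\abs{\Im z}\,\bigl\lvert[f_z,f_z]+\varepsilon^{-1}\norm l^2\bigr\rvert .
\]
In the definite case $J=I_\fH$ this already closes the argument: then $[f_z,f_z]=\norm{f_z}^2\geq0$, the bracket is $\geq\varepsilon^{-1}\norm l^2$, and $M_\Gamma(z)+(z/\varepsilon)I$ is bounded below by $\abs{\Im z}/\varepsilon$ for \emph{every} $\varepsilon>0$, the adjoint being treated identically. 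Note that $\abs z>\varepsilon$ plays no role in the definite case.

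The genuinely Krein-space case is where the hypothesis $\abs z>\varepsilon$ must be used, and it is the main obstacle. Now $[\cdot,\cdot]$ is indefinite, so I only have $[f_z,f_z]\geq-\norm{P_-f_z}^2$, where $P_-=(I_\fH-J)/2$ projects onto the negative part of $\fH$; the bracket above is then $\geq\varepsilon^{-1}\norm l^2-\norm{P_-f_z}^2$, and to keep it bounded away from $0$ I would need the quantitative control
\[
\norm{P_-f_z}^2\leq\abs z^{-1}\norm l^2 ,
\]
for this forces the bracket to be $\geq\tfrac{\abs z-\varepsilon}{\varepsilon\abs z}\norm l^2>0$ \emph{precisely because} $\abs z>\varepsilon$, yielding the bounded-below estimate (and, by the symmetric estimate at $\ol z$, the injectivity of the adjoint). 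Establishing such a bound on the negative-type component of the defect elements is the technical heart of the lemma: the plan is to extract it from the canonical decomposition of the closed relation $M_\Gamma(z)$ together with the surjectivity $\ran(A_*-zI)=\fH$ available at points of $\cO_\Gamma$, which confines the ``anti-dissipative'' directions of $M_\Gamma(z)$ to a bounded portion of the spectrum that the large modulus $\abs{-z/\varepsilon}=\abs z/\varepsilon$ escapes. Once this estimate is in place, $M_\Gamma(z)+(z/\varepsilon)I$ is boundedly invertible, hence $0\in\res(M_{\Gamma_\varepsilon}(z)+zI)$ and $z\in\Sigma_{\Gamma_\varepsilon}$, which is the assertion.
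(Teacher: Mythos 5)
Your opening reduction is correct: $M_{\Gamma_\varepsilon}(z)+zI$ is boundedly invertible iff $M_\Gamma(z)+(z/\varepsilon)I$ is, which is exactly the identity
$(M_\Gamma(z)+\varepsilon^{-1}zI)^{-1}=(\varepsilon^{1/2}I)(M_{\Gamma_\varepsilon}(z)+zI)^{-1}(\varepsilon^{1/2}I)$
that the paper itself records after the lemma (proof of Example~\ref{exam:Pstan2}), and your Green-identity estimate does settle the definite case $J=I_\fH$. But in the genuinely indefinite case --- the whole point of Lemma~\ref{lem:r2} --- your argument has a hole precisely where you locate ``the technical heart'': the bound $\norm{P_-f_z}^2\leq\abs{z}^{-1}\norm{l}^2$ is never established, and there is no mechanism that would yield it. The correspondence $l\mapsto f_z$ is an inverse branch of the $\gamma$-field of an arbitrary boundary relation, which is in general unbounded; neither the closedness of $M_\Gamma(z)$ nor the surjectivity $\ran(A_*-zI)=\fH$ available on $\cO_\Gamma$ controls the norm of a defect element, let alone its negative-type component, by the norm of its boundary value. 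Your final paragraph is a plan, not a proof, so as written the Krein-space case remains unproven.

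The paper's proof avoids dissipativity and the Green identity altogether; the only input is that $M_\Gamma(z)\in\wtscrC(\fL)$ for $z\in\delta_\Gamma$ (Corollary~\ref{cor:rrzz}). It takes the canonical decomposition $M_\Gamma(z)=M^s_\Gamma(z)\hop(\{0\}\times\fL_z)$, $\fL_z:=\mul M_\Gamma(z)$, and composes the closed operator part with the unitary $U_z$ from $\fL$ onto $\dom M_\Gamma(z)$ equipped with the graph inner product; then $\wtM^s_\Gamma(z):=M^s_\Gamma(z)U_z\in[\fL]$ is automatically a contraction, so after scaling $\norm{\wtM^s_{\Gamma_\varepsilon}(z)}\leq\varepsilon<\abs{z}$ on $B^\varepsilon_\Gamma$. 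Surjectivity of $M_{\Gamma_\varepsilon}(z)+zI$ then follows from the Neumann series applied to $\wtM^s_{\Gamma_\varepsilon}(z)+zU_z$, and injectivity from a second Neumann-series factorization involving the orthogonal projection $\Pi_z$ onto $\fL_z$. The indefiniteness of $[\cdot,\cdot]$ never enters: the quantitative fact that replaces your missing estimate is that the operator part of a closed relation, viewed through the graph-norm unitary, has norm at most one --- which is precisely what lets the hypothesis $\abs{z}>\varepsilon$ do its work. To salvage your route you would in effect have to reprove this contraction bound, at which point the Green identity becomes superfluous.
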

\begin{proof}
Fix $z\in B^\varepsilon_T$.
Then $M_\Gamma(z)\in\wtscrC(\fL)$
admits a canonical decomposition
\[
M_\Gamma(z)=M^s_\Gamma(z)\hop
(\{0\}\times \fL_z)\,,\quad
\fL_z:=\mul M_\Gamma(z)=\ol{\fL_z}
\]
where $M^s_\Gamma(z)$ is a closed operator
in $\fL$ with
$\dom M^s_\Gamma(z)=\dom M_\Gamma(z)$
(\eg \cite{Hassi18,Hassi07}).

\textit{Step 1.}
We prove that $M_{\Gamma_\varepsilon}(z)+zI$
is surjective for $z\in B^\varepsilon_T$.

Let $U_z$ be a unitary operator from
a Hilbert space $\fL$ to a Hilbert space
$(\dom M_\Gamma(z),\braket{\cdot,\cdot}_z)$,
\[
\braket{l,l^\prime}_z:=
\braket{l,l^\prime}_\fL+
\braket{M^s_\Gamma(z)l,M^s_\Gamma(z)l^\prime}_\fL\,,
\quad l,l^\prime\in\dom M_\Gamma(z)\,.
\]
Put
$\wtM^s_\Gamma(z):=M^s_\Gamma(z)U_z$.
Then $(\forall l\in\fL)$
\[
\norm{l}^2_\fL=
\norm{U_zl}^2_z=
\norm{U_zl}^2_\fL+
\norm{\wtM^s_\Gamma(z)l}^2_\fL
\]
so $\wtM^s_\Gamma(z)\in[\fL]$ with
the operator norm
$\norm{\wtM^s_\Gamma(z)}\leq1$.

Let $M_{\Gamma_\varepsilon}$ be the Weyl family
corresponding to a \ubp $(\fL,\Gamma_\varepsilon)$
for $T^+$. Then
\[
M_{\Gamma_\varepsilon}(z)=M^s_{\Gamma_\varepsilon}(z)
\hop(\{0\}\times \varepsilon^{1/2}\fL_z)
\]
with the operator part
$M^s_{\Gamma_\varepsilon}(z)=
\varepsilon M^s_{\Gamma}(z)$. Put
\[
\wtM^s_{\Gamma_\varepsilon}(z):=
M^s_{\Gamma_\varepsilon}(z)U_z=
\varepsilon\wtM^s_\Gamma(z)
\]
so that
$\wtM^s_{\Gamma_\varepsilon}(z)\in[\fL]$
and
$\norm{\wtM^s_{\Gamma_\varepsilon}(z)}\leq
\varepsilon $.

By the Neumann series for bounded operators
\[
0\in\res(\wtM^s_{\Gamma_\varepsilon}(z)+zU_z)\,,
\quad z\in B^\varepsilon_T\,.
\]
Now that
\[
\ran(M_{\Gamma_\varepsilon}(z)+zI)=
\ran(M^s_{\Gamma_\varepsilon}(z)+zI)+\fL_z\,,
\]
\[
\ran(M^s_{\Gamma_\varepsilon}(z)+zI)=
\ran(\wtM^s_{\Gamma_\varepsilon}(z)+zU_z)
\]
it follows that
$M_{\Gamma_\varepsilon}(z)+zI$ is surjective for
$z\in B^\varepsilon_T$.

\textit{Step 2.}
We prove that $M_{\Gamma_\varepsilon}(z)+zI$
is injective, and then bijective,
for $z\in B^\varepsilon_T$.

Let $z\in B^\varepsilon_T$ and
let $\Pi_z$ be an orthogonal projection in
$\fL$ onto $\fL_z$. Then
\begin{align*}
\ker(M_{\Gamma_\varepsilon}(z)+zI)=&
\ker(M^s_{\Gamma_\varepsilon}(z)+zI-z\Pi_z)
\\
=&U_z\ker(\wtM^s_{\Gamma_\varepsilon}(z)+zU_z-z\Pi_zU_z)\,.
\end{align*}
Using
\[
\wtM^s_{\Gamma_\varepsilon}(z)+zU_z-z\Pi_zU_z=
zU_z(S_z+I)-z\Pi_zU_z\,,
\]
\[
S_z:=z^{-1}U^*_z\wtM^s_{\Gamma_\varepsilon}(z)\,,
\quad
\norm{S_z}<1\quad\text{so}\quad
-1\in\res S_z
\]
it follows that
\[
\wtM^s_{\Gamma_\varepsilon}(z)+zU_z-z\Pi_zU_z=
zU_z(S_z+I)
[I-(S_z+I)^{-1}U^*_z\Pi_zU_z]\,.
\]
Thus by the Neumann series
$M_{\Gamma_\varepsilon}(z)+zI$ is injective
for $z\in B^\varepsilon_T$ such that
\[
\norm{(S_z+I)^{-1}U^*_z\Pi_zU_z}<1\,.
\]
But
\[
\norm{(S_z+I)^{-1}}<1\,,\quad
\norm{U^*_z\Pi_zU_z}\leq1
\]
so the above inequality holds true indeed.
\end{proof}
Subsequently Lemmas~\ref{lem:r} and \ref{lem:r2}
(see also Theorem~\ref{thm:IUBP2xxcor})
imply the next result.
\begin{thm}\label{thm:resTG}
With a standard unitary operator
$V_\varepsilon$ in \eqref{eq:Ve},
a \br $\Gamma$ for $T^+$
is in 1-1 correspondence with a \br
$\Gamma_\varepsilon:=V_\varepsilon\Gamma$
for $T^+$ such that $\res\cJ(\Gamma_\varepsilon)
\supseteq B^\varepsilon_T$, where
$B^\varepsilon_T$ is defined in \eqref{eq:Be}.
\end{thm}
Therefore, a \br $\Gamma$ for $T^+$ such that
$\delta_T\neq\emptyset$ is in 1-1 correspondence
with a \br $\Gamma^\prime$ for $T^+$ such that
$\res\cJ(\Gamma^\prime)\neq\emptyset$.

As an application we show that $M_\Gamma$
is a generalized Nevanlinna family provided
$T$ is standard in a Pontryagin space;
recall Example~\ref{exam:Pstan}.
We recall, see \eg \cite[Definition~4.2]{Behrndt11},
that a family, $M$, of linear relations
$M(z)$ in $\fL$ defined for $z\in\cO$, where
$\cO$ is an open symmetric subset of $\bbC_*$,
is called a \textit{generalized Nevanlinna family}
with $\kappa$ negative squares if the following
conditions hold:
\begin{enumerate}[label=(\arabic*),
ref=(\arabic*)]
\item\label{item:M1}
$M(z)^*=M(\ol{z})$ for all $z\in\cO$.
\item\label{item:M2}
$(M(z)+wI)^{-1}\in[\fL]$ for all $z\in\bbC_+\mcap\cO$
and some $w\in\bbC_+$.
\item\label{item:M3}
The kernel
$(\Psi(\ol{z})^*\Phi(\ol{w})-\Phi(\ol{z})^*\Psi(\ol{w}))
/(z-\ol{w})$ has $\kappa$ negative squares
(see \eg \cite{Alpay98} for details) for all
$z$, $w\in\cO$, where
\[
\Phi(z):=\begin{cases}
-(M(z)+wI)^{-1}\,, & z\in\bbC_+\mcap\cO\,,
\\
-(M(z)+\ol{w}I)^{-1}\,, & z\in(\bbC_+\mcap\cO)^*
\end{cases}
\]
and $\Psi(z):=-(I+z\Phi(z))$.
\end{enumerate}
\begin{exam}\label{exam:Pstan2}
Let $\fH$ be a Pontryagin space
and let $T\in\wtscrC_s(\fH)$ be standard.
The Weyl family $M_\Gamma$ corresponding to
a \br $\Gamma$ for $T^+$ is a generalized
Nevanlinna family.
\end{exam}
\begin{proof}
By Corollary~\ref{cor:rrzz}, condition \ref{item:M1}
is satisfied
for $z\in\delta_T$. By Theorem~\ref{thm:resTG}
there is a $\varepsilon>0$
such that the Weyl family
$M_{\Gamma_\varepsilon}$ corresponding to a
\br $\Gamma_\varepsilon$ for $T^+$
satisfies condition \ref{item:M2}
with $z=w\in B^\varepsilon_T$.
But
\[
(M_\Gamma(z)+\varepsilon^{-1}zI)^{-1}
=(\varepsilon^{1/2}I)(M_{\Gamma_\varepsilon}(z)+zI)^{-1}
(\varepsilon^{1/2}I)
\]
so $M_\Gamma$ on $B^\varepsilon_T$
fulfills \ref{item:M2} too.

By using
\[
\Phi_{\Gamma_\varepsilon}(z):=
P_\fL(\cJ(\Gamma_\varepsilon)-zI)^{-1}E_\fL=
-(M_{\Gamma_\varepsilon}(z)+zI)^{-1}\,,
\]
\[
P_\fL\co\fH\times\fL\lto\fL\,,\quad
(f,l)\mapsto l\,;\quad
E_\fL\co\fL\lto\{0\}\times\fL\,,\quad
l\mapsto(0,l)
\]
and letting $\Phi=\Phi_{\Gamma_\varepsilon}$
it follows that,
for $n\in\bbN$, $\lambda_\alpha\in
B^\varepsilon_T$,
$l_\alpha\in\fL$, the Hermitian matrix
(see also \cite[Theorem~4.8]{Behrndt11})
\[
([P_\fH(\cJ(\Gamma_\varepsilon)-\ol{\lambda_\beta})^{-1}
(0,l_\alpha),
P_\fH(\cJ(\Gamma_\varepsilon)-\ol{\lambda_\alpha})^{-1}
(0,l_\beta)] )^n_{\alpha,\beta=1}
\]
where $P_\fH\co\fH\times\fL\lto\fH$,
$(f,l)\mapsto f$, and
\[
P_\fH(\cJ(\Gamma_\varepsilon)-\ol{\lambda_\alpha})^{-1}
(0,l_\beta)\in\fN_{\ol{\lambda_\alpha}}(A_*)
\]
has $\kappa^\prime\leq\kappa$ negative squares;
$\kappa$ is the negative index of $\fH$.
This shows that $M_{\Gamma_\varepsilon}$, and
then $M_\Gamma$,
on $B^\varepsilon_T$ satisfies condition \ref{item:M3},
\ie $M_{\Gamma}$
is a generalized Nevanlinna family with $\kappa^\prime$
negative squares.
\end{proof}
In the above example letting $T$ be simple
we deduce \cite[Theorem~4.8]{Behrndt11}.
%%%%%%%%%%%%%%%%%%%%%%%%%%%%%%%%%%%%%%%%%%%%%%%%%%%%%%%%%%%%%%
%%%%%%%%%%%%%%%%%%%%%%%%%%%%%%%%%%%%%%%%%%%%%%%%%%%%%%%%%%%%%%
%%%%%%%%%%%%%%%%%%%%%%%%%%%%%%%%%%%%%%%%%%%%%%%%%%%%%%%%%%%%%%
%%%%%%%%%%%%%%%%%%%%%%%%%%%%%%%%%%%%%%%%%%%%%%%%%%%%%%%%%%%%%%
\section{Transformation of a boundary pair
by means of
\texorpdfstring{$\Gamma\lto\Gamma V^{-1}$}{}}\label{sec:GV}
%%%%%%%%%%%%%%%%%%%%%%%%%%%%%%%%%%%%%%%%%%%%%%%%%%%%%%%%%%%%%%
%%%%%%%%%%%%%%%%%%%%%%%%%%%%%%%%%%%%%%%%%%%%%%%%%%%%%%%%%%%%%%
%%%%%%%%%%%%%%%%%%%%%%%%%%%%%%%%%%%%%%%%%%%%%%%%%%%%%%%%%%%%%%
%%%%%%%%%%%%%%%%%%%%%%%%%%%%%%%%%%%%%%%%%%%%%%%%%%%%%%%%%%%%%%
In the present and subsequent sections
we examine Weyl families corresponding to
the transformed isometric/(essentially) unitary boundary pairs
(for short, \ibp\!\!/\eeubp\!\!).
We are mainly interested in the cases for which
Lemmas~\ref{lem:Derk}, \ref{lem:Torth} are applicable.

Let $\fL$ be a Hilbert space and let
$\fH$ and $\cH$ be Krein spaces with fundamental
symmetries $J_\fH=J$ and $J_\cH$, respectively.
\begin{thm}\label{thm:IUBP}
Let $T\in\wtscrC_s(\fH)$, let
$(\fL,\Gamma)$ be an \ibp\!\!/\eeubp for $T^+$,
and let $V$ be a unitary relation
$\whJ_\fH\lto\whJ_\cH$
with a closed $\dom V\supseteq A_*$.
Put $T^\prime:=V(T)$ and $\Gamma^\prime:=\Gamma V^{-1}$.
Then: $T^\prime\in\wtscrC_s(\cH)$,
$(\fL,\Gamma^\prime)$ is an \ibp\!\!/\eeubp for the
adjoint $T^{\prime\,+}=V(T^+)$, and the
corresponding Weyl family $M_{\Gamma^\prime}$
is given by
\[
M_{\Gamma^\prime}(z)=\Gamma(\whfN^V_z(A_*))\,,\quad
z\in\bbC_*
\]
where
\[
\whfN^V_z(A_*):=\dom( V\mcap(A_*\times zI) )\,.
\]

If moreover
$\ker V\subseteq\ker\Gamma$
then $V$ establishes a 1-1
correspondence between an \ibp\!\!/\eeubp $(\fL,\Gamma)$
for $T^+$ and an \ibp\!\!/\eeubp $(\fL,\Gamma^\prime)$
for $T^{\prime\,+}$.
\end{thm}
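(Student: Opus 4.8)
The plan is to show that the forward assignment $\Gamma\mapsto\Gamma^\prime=\Gamma V^{-1}$, whose well-definedness is the content of the first part, is a bijection whose inverse is $\Gamma^\prime\mapsto\Gamma^\prime V$. The first thing I would record is that $V^{-1}$ is again a unitary relation $\whJ_\cH\lto\whJ_\fH$: from $V=V_\#=(V^+)^{-1}$ one gets $V^+=V^{-1}$, whence $(V^{-1})^+=(V^+)^{-1}=V$ and $(V^{-1})_\#=V^{-1}$. Moreover $\dom V^{-1}=\ran V$ is closed, since $\ran V^+=\ran V^{-1}=\dom V$ is closed by hypothesis and $\ran V$ is closed iff $\ran V^+$ is closed (the closed-range property recalled after Lemma~\ref{lem:clVTan0001}); and $\dom\Gamma^\prime=\dom(\Gamma V^{-1})\subseteq\ran V=\dom V^{-1}$ holds automatically. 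Hence the first part may be reapplied with $(V,\Gamma,T,\fH)$ replaced by $(V^{-1},\Gamma^\prime,T^\prime,\cH)$.

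To see that this reapplication returns a boundary pair for $T^+$ (and not for some other relation), I would verify \eqref{eq:kerVTdomV} for $V$ and $T$. Indeed $\ker V\subseteq\ker\Gamma\subseteq\ker\Gamma_\#=T$ because $\Gamma\subseteq\Gamma_\#$, while $T\subseteq T^+=\ol{A_*}\subseteq\dom V$ because $\dom V$ is closed and contains $A_*$; thus $\ker V\subseteq T\subseteq\dom V$. The Shmul'yan inverse property then gives $V^{-1}(T^\prime)=V^{-1}(V(T))=T$, so applying the first part to $V^{-1}$ yields an \ibp\!\!/\eeubp $\Gamma^\prime(V^{-1})^{-1}=\Gamma^\prime V$ for $(V^{-1}(T^\prime))^+=T^+$, of the same type as $\Gamma^\prime$.

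The heart of the matter is the pair of composition identities
\[
\Gamma V^{-1}V=\Gamma\,,\qquad
\Gamma^\prime V V^{-1}=\Gamma^\prime\,.
\]
For the first, using $V^{-1}V=I_{\dom V}\hsum(\{0\}\times\ker V)$, any $(\whf,\whl)\in\Gamma V^{-1}V$ arises from some $\whg\in A_*$ with $(\whg,\whl)\in\Gamma$ and $\whf-\whg\in\ker V$; since $\ker V\subseteq\ker\Gamma$ we have $(\whf-\whg,0)\in\Gamma$, so $(\whf,\whl)=(\whg,\whl)+(\whf-\whg,0)\in\Gamma$, and the reverse inclusion is immediate. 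The second identity is the first one applied to $V^{-1}$ in place of $V$: its only hypothesis, $\ker(V^{-1})=\mul V\subseteq\ker\Gamma^\prime$, holds automatically, for $0\in\ker\Gamma$ gives $\mul V=V(\{0\})\subseteq V(\ker\Gamma)=\ker\Gamma^\prime$.

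Together the two identities show that $\Gamma\mapsto\Gamma V^{-1}$ and $\Gamma^\prime\mapsto\Gamma^\prime V$ are mutually inverse, establishing the stated 1-1 correspondence; it is respected on both sides, since symmetrically $\ker V\subseteq V^{-1}(\ker\Gamma^\prime)=\ker(\Gamma^\prime V)$ for any $\Gamma^\prime$ in the image. The step I expect to be the main obstacle is precisely the bookkeeping that lets the first part be reused for $V^{-1}$, that is, the closedness of $\ran V$ and the identity $V^{-1}(T^\prime)=T$: only under these does the inverse assignment $\Gamma^\prime\mapsto\Gamma^\prime V$ land in \ibp\!\!/\eeubp\!\!'s for $T^+$, whereas the composition identities themselves reduce, once \eqref{eq:kerVTdomV} is secured, to a short computation.
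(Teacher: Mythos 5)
Your treatment of the 1-1 correspondence is correct, and it is in substance the paper's own route: the same three ingredients appear, namely the chain $\ker V\subseteq\ker\Gamma\subseteq\ker\Gamma_\#=T\subseteq T^+=\ol{A_*}\subseteq\dom V$ securing \eqref{eq:kerVTdomV} and hence $T=V^{-1}(T^\prime)$, the closedness of $\ran V$, and the identity $\Gamma^\prime V=\Gamma V^{-1}V=\Gamma\hsum(\ker V\times\{0\})=\Gamma$. Where you formally reapply the first part to the data $(V^{-1},\Gamma^\prime,T^\prime)$, the paper instead recomputes directly, obtaining $\Gamma^+=V^{-1}\Gamma^{\prime\,+}$ and $\ol{\Gamma}=\ol{\Gamma^{\prime}}V$ from Lemma~\ref{lem:Derk} and then $\mul\Gamma^+=V^{-1}(T^\prime)=T$; your packaging is legitimate, and your verifications that $V^{-1}$ is unitary with closed domain $\ran V$ (via the equivalence of closedness of $\ran\ol{V}$ and $\ran V^+$) are exactly the bookkeeping that justifies it.

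The genuine gap is that the first part is nowhere proved, even though it carries most of the theorem's content and your own argument invokes it twice as a black box. Concretely missing are: (i) $T^\prime\in\wtscrC_s(\cH)$ and $T^{\prime\,+}=V(T^+)$, which in the paper follow from Lemma~\ref{lem:Torth} after checking the equality in \eqref{eq:ineVT} — this holds by Lemma~\ref{lem:Wie} because $V$ is closed and $T\subseteq\dom V$ is closed — and conditions $(V)$, $(V^\prime)$, both of which here reduce to the closedness of $\dom V$ since $\ol{T}\hsum\dom\ol{V}=\dom V$ and $T^+\hsum\ran V^+=\dom V$; (ii) that $(\fL,\Gamma^\prime)$ is an \ibp for $T^{\prime\,+}$, which rests on $\Gamma^{\prime\,+}=V\Gamma^+$ from Lemma~\ref{lem:Derk} (using that $\ran V^{-1}=\dom V$ is closed and $A_*\subseteq\dom V$), whence $\mul\Gamma^{\prime\,+}=V(T)=T^\prime$, together with, in the essentially unitary case, the same lemma applied to $\ol{\Gamma}V^{-1}$ using $A_\star\subseteq\dom V$; and (iii) the Weyl family formula $M_{\Gamma^\prime}(z)=\Gamma(\whfN^V_z(A_*))$, whose proof in the paper hinges on the computation $V^{-1}V_z=I_{\dom V_z}\hsum(\{0\}\times\ker V)$ with $V_z:=V\mcap(A_*\times zI)$, and on the observation $\ker V_z=A_*\mcap\ker V\subseteq\dom V_z$, which collapses $\Gamma(A_*\mcap(\dom V_z\hsum\ker V))$ to $\Gamma(\dom V_z)$. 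Item (iii) in particular cannot be retrieved from your composition identities: the Weyl family statement does not assume $\ker V\subseteq\ker\Gamma$, and nothing in your correspondence argument touches $M_{\Gamma^\prime}$. As written, the proposal is a correct proof of the theorem's final sentence conditional on its first paragraph, not a proof of the theorem.
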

\begin{proof}
$T^\prime\in\wtscrC_s(\cH)$ and
$T^{\prime\,+}=V(T^+)$ follow from Lemma~\ref{lem:Torth}.
By Lemma~\ref{lem:Derk}
the adjoint
$\Gamma^{\prime\,+}=V\Gamma^+$, so
$\mul\Gamma^{\prime\,+}=T^\prime$,
and this proves $(\fL,\Gamma^\prime)$ is an
\ibp for $T^{\prime\,+}$.

Suppose $(\fL,\Gamma)$ is an \eubp for $T^+$;
hence $\ol{\Gamma}=\Gamma_\#$,
$\Gamma^{\prime\,+}=(\ol{\Gamma}V^{-1})^{-1}$.
Since also $A_\star\subseteq\dom V$, it holds
$(\ol{\Gamma}V^{-1})^+=(\ol{\Gamma}V^{-1})^{-1}$
by Lemma~\ref{lem:Derk}, and this proves
$(\fL,\Gamma^\prime)$ is an \eubp for
$T^{\prime\,+}$.

Put $(I=I_\cH)$
\[
A^\prime_*:=\dom\Gamma^\prime\,,\quad
V_z:=V\mcap(A_*\times zI)\,.
\]
Then
\[
A^\prime_*=V(A_*)\,,\quad
\whfN_z(A^\prime_*)=\ran V_z=V_z(A_*)\,;
\]
note that $\dom V_z\subseteq A_*$. Then the relation
\[
M_{\Gamma^\prime}(z):=\Gamma^\prime(\whfN_z(A^\prime_*))
=\Gamma(A_*\mcap V^{-1}V_z(A_*))\,.
\]

$V^{-1}V_z\in\scrC(\fH^2)$ consists of
$(\whf,\whg)\in\fH^4$ such that $\whf\in A_*$
and $(\exists h\in\cH)$
$(\whf,(h,zh))\in V$, $(\whg,(h,zh))\in V$.
It follows that
$\whg-\whf\in\ker V$, so
\[
V^{-1}V_z=I_{\dom V_z}\hsum(\{0\}\times\ker V)
\]
and then
\[
V^{-1}V_z(A_*)=\dom V_z\hsum\ker V\,.
\]
By noting that
\[
\ker V_z=A_*\mcap\ker V\subseteq\dom V_z
\]
one concludes that $M_{\Gamma^\prime}(z)=\Gamma(\dom V_z)$.

Finally, we verify that
an \ibp\!\!/\eeubp $(\fL,\Gamma^\prime)$
for $T^{\prime\,+}$ leads back to an \ibp\!\!/\eeubp
$(\fL,\Gamma)$ for $T^+$. By hypothesis $T$
satisfies \eqref{eq:kerVTdomV}, so
$T=V^{-1}(T^\prime)$.
Since $\ran V\supseteq T^{\prime\,+}\supseteq T^\prime$
is closed, Lemma~\ref{lem:Torth}
gives $T\in\wtscrC_s(\fH)$,
$T^+=V^{-1}(T^{\prime\,+})$. Moreover
\[
\Gamma^\prime V=\Gamma V^{-1}V=\Gamma\hsum
(\ker V\times\{0\})=\Gamma
\]
and then by Lemma~\ref{lem:Derk}
$\Gamma^+=V^{-1}\Gamma^{\prime\,+}$ and
$\ol{\Gamma}=\ol{\Gamma^{\prime}}V$,
since
$\dom\Gamma^\prime=V(A_*)\subseteq\ran V$
and
$\ran\Gamma^{\prime\,+}=V(A_\#)\subseteq\ran V$,
respectively.
It follows that $\mul\Gamma^+=V^{-1}(T^\prime)=T$,
so $(\fL,\Gamma)$ is an \ibp\!\!/\eeubp for $T^+$.
\end{proof}
\begin{rem}
Suppose $(\fL,\Gamma)$ is a \ubp for $T^+$.
The assumption $A_*\subseteq\dom V\in\wtscrC(\fH)$
to make $\Gamma^\prime$ unitary in
Theorem~\ref{thm:IUBP} can be weakened
by requiring instead that both $\dom V$ and
$A_*\hsum\dom V$ be in $\wtscrC(\fH)$. This is shown
by applying Theorem~2.13(iv) in the arXiv version
of \cite{Derkach09} and then Lemma~\ref{lem:VWorth}.
But in this case
$T^{\prime\,+}\supseteq V(T^+)$, with the equality
if \ref{item:V} holds.
\end{rem}
It follows that a 1-1
correspondence between a \br $\Gamma$ for $T^+$
and a \br $\Gamma^\prime$ for $T^{\prime\,+}$ is established
via a unitary relation $V$ with closed domain
and satisfying $\ker V\subseteq T$. The special
case of the present statement is considered in
\cite{Behrndt11}.
\begin{exam}\label{exam:fT}
With a standard unitary operator
$V=U_{J}:=\bigl(\begin{smallmatrix}I & 0 \\ 0 & J
\end{smallmatrix}\bigr)\in[\fH^2]$
the correspondence principle between a \br
$\Gamma$ for $T^+$ and a \br $\Gamma^\prime$ for
$T^{\prime\,*}=J T^+$
is formulated in \cite[Proposition~3.3]{Behrndt11}.
For the present $V$ it holds
\[
T^\prime=J T=:\fT\,,\quad
\whfN^{V}_z(A_*)=J\whfN_z(J A_*)\,.
\]

Assume that $T$ is a densely defined,
closed, symmetric operator in a $J$-space, and that
a \ubp $(\fL,\Gamma)$ turns into an
\obt $\Pi_\Gamma=(\fL,\Gamma_0,\Gamma_1)$
for $T^+$. Let $\gamma_\Gamma$ and $M_\Gamma$
be the $\gamma$-field and Weyl family $M_\Gamma$
corresponding to $\Pi_\Gamma$. By identifying
$T^+$ with $\dom T^+=\dom\fT^*$,
$\Pi_{\Gamma^\prime}=\Pi_\Gamma$ is an \obt for
$\fT^*$ with Weyl family
\[
M_{\Gamma^\prime}(z)=\{(\Gamma_0f_z,\Gamma_1f_z)\vrt
f_z\in\fN_z(\fT^*)\}\,,\quad z\in\bbC_*\,.
\]
It will follow from Theorem~\ref{thm:IUBP3} that,
if $\res T_0\mcap\res\fT_0\neq\emptyset$, $\fT_0:=J T_0$,
then for $z$ from this set, the
Weyl function can be represented
on all of $\fL$ by using \eqref{eq:fTex}.

Let $P_\pm$ be the canonical
projections in $\fH$ onto $\fH_\pm$ (positive and
negative subspaces of a $J$-space). Since
$I-J=2P_-$, it will follow from
Corollary~\ref{cor:Delta0} that it may happen
that $M_{\Gamma^\prime}(z)=M_{\Gamma}(z)$
iff either
(a) $z=0\in\res T_0\mcap\res\fT_0$ or
(b) $z\in\res T_0\mcap\res\fT_0\setm\{0\}$ and
$\fN_z(T^+)\subseteq\fH_+$.
Note that (a)--(b) are equivalent to
$\fN_z(T^+)\subseteq\fN_z(\fT^*)$.
\end{exam}
%%%%%%%%%%%%%%%%%%%%%%%%%%%%%%%%%%%%%%%%%%%%%%%%%%%%%%%%%%%%%%
%%%%%%%%%%%%%%%%%%%%%%%%%%%%%%%%%%%%%%%%%%%%%%%%%%%%%%%%%%%%%%
%%%%%%%%%%%%%%%%%%%%%%%%%%%%%%%%%%%%%%%%%%%%%%%%%%%%%%%%%%%%%%
%%%%%%%%%%%%%%%%%%%%%%%%%%%%%%%%%%%%%%%%%%%%%%%%%%%%%%%%%%%%%%
\subsection{Weyl family of transformed
\texorpdfstring{\obt}{} }\label{sec:LFT}
%%%%%%%%%%%%%%%%%%%%%%%%%%%%%%%%%%%%%%%%%%%%%%%%%%%%%%%%%%%%%%
%%%%%%%%%%%%%%%%%%%%%%%%%%%%%%%%%%%%%%%%%%%%%%%%%%%%%%%%%%%%%%
%%%%%%%%%%%%%%%%%%%%%%%%%%%%%%%%%%%%%%%%%%%%%%%%%%%%%%%%%%%%%%
%%%%%%%%%%%%%%%%%%%%%%%%%%%%%%%%%%%%%%%%%%%%%%%%%%%%%%%%%%%%%%
By applying Theorem~\ref{thm:IUBP}
we characterize the Weyl
family corresponding to the transformed \obt
More specifically, we assume that $T$
is a densely defined, closed,
symmetric operator in a Krein space $\fH$ with
a fundamental symmetry $J_\fH=J$ and that
$\Pi_\Gamma=(\fL,\Gamma_0,\Gamma_1)$ is an
\obt for $T^+$, so that
$\dom\Gamma=T^+$ is identified with $\dom T^+$.
We transform $\Gamma$ according to
$\Gamma\lto\Gamma^\prime:=\Gamma V^{-1}$,
where $V$ is a standard unitary operator from a
$\whJ_\fH$-space to a $\whJ_\cH$-space,
which we represent in matrix form.
By Theorem~\ref{thm:IUBP}, and noting that
$\Gamma^\prime$ is an operator with
$\ran\Gamma^\prime=\ran\Gamma$, it follows that
the triple
$\Pi_{\Gamma^\prime}=
(\fL,\Gamma^\prime_0,\Gamma^\prime_1)$
with the operators
\[
\Gamma^\prime_i:=
\left\{\left(V\begin{pmatrix}
f \\ T^+f\end{pmatrix},\Gamma_if \right)\vrt
f\in\dom T^+ \right\}\,,\quad i\in\{0,1\}
\]
is an \obt for the adjoint $T^{\prime\,+}=V(T^+)$ of
a closed symmetric relation
\[
T^\prime:=V(T)=\left\{V\begin{pmatrix}
f \\ Tf\end{pmatrix}\vrt f\in\dom T \right\}
\]
in a $J_\cH$-space.
Generally, $T^\prime\in\wtscrC_s(\cH)$ is neither
an operator nor densely defined, as
$\mul T^\prime$ consists of $h^\prime\in\cH$ such that
$(\exists\whf\in T)$ $\bigl(\begin{smallmatrix}
0 \\ h^\prime
\end{smallmatrix}\bigr)=V\whf$,
and similarly for $\mul T^{\prime\,+}$.
Under additional assumptions imposed on $V$
we achieve, however, that $T^\prime$ is a densely defined
operator. Having done that, we concentrate on
the characterization of the Weyl family
$M_{\Gamma^\prime}$ corresponding to $\Pi_{\Gamma^\prime}$.
Theorem~\ref{thm:IUBP3} is our main result in this
paragraph.
%%%%%%%%%%%%%%%%%%%%%%%%%%%%%%%%%%%%%%%%%%%%%%%%%%%%%%%%%%%%%%
%%%%%%%%%%%%%%%%%%%%%%%%%%%%%%%%%%%%%%%%%%%%%%%%%%%%%%%%%%%%%%
%%%%%%%%%%%%%%%%%%%%%%%%%%%%%%%%%%%%%%%%%%%%%%%%%%%%%%%%%%%%%%
%%%%%%%%%%%%%%%%%%%%%%%%%%%%%%%%%%%%%%%%%%%%%%%%%%%%%%%%%%%%%%
\subsubsection{Linear fractional transformation}
%%%%%%%%%%%%%%%%%%%%%%%%%%%%%%%%%%%%%%%%%%%%%%%%%%%%%%%%%%%%%%
%%%%%%%%%%%%%%%%%%%%%%%%%%%%%%%%%%%%%%%%%%%%%%%%%%%%%%%%%%%%%%
%%%%%%%%%%%%%%%%%%%%%%%%%%%%%%%%%%%%%%%%%%%%%%%%%%%%%%%%%%%%%%
%%%%%%%%%%%%%%%%%%%%%%%%%%%%%%%%%%%%%%%%%%%%%%%%%%%%%%%%%%%%%%
Our consideration is based on a general hypothesis
that $V(T)$ is a linear fraction transformation
of $T$. We now discuss this in more detail.

A standard unitary operator $V\in[\fH^2,\cH^2]$
from a $\whJ_\fH$-space to a $\whJ_\cH$-space
is represented in the matrix form
\begin{equation}
V=\begin{pmatrix}
A & B \\ C & D
\end{pmatrix}\,,
\quad
V^{-1}=V^+=\begin{pmatrix}
D^+ & - B^+ \\
-C^+ & A^+
\end{pmatrix}
\label{eq:stan}
\end{equation}
where the operators $A$, $B$, $C$, $D\in[\fH,\cH]$ satisfy
the conditions
\begin{equation}
\begin{split}
A^+D-C^+B=I_\fH\,,&\quad
AD^+-BC^+=I_\cH\,,
\\
A^+C\in[\fH]\;\;\text{is ($J_\fH$-)self-adjoint,}&\quad
AB^+\in[\cH]\;\;\text{is ($J_\cH$-)self-adjoint,}
\\
B^+D\in[\fH]\;\;\text{is ($J_\fH$-)self-adjoint,}&\quad
CD^+\in[\cH]\;\;\text{is ($J_\cH$-)self-adjoint.}
\end{split}
\label{eq:dkfjrnw0}
\end{equation}
The prefix $J_\fH$- (resp. $J_\cH$-) emphasizes
that, for example, $A^+C$ is self-adjoint in
a Krein space $\fH$.
The reader may compare $V$ in \eqref{eq:stan}
with \cite[Corollary~3.4]{Derkach15a},
\cite[Eqs.~(4.1)--(4.3)]{Behrndt09}.
\begin{exam}[Hilbert space unitary operator]\label{exam:fT2}
One verifies that,
if $V$ is a standard unitary operator
such that
\begin{equation}
\whJ_\cH V=V\whJ_\fH
\label{eq:commutV}
\end{equation}
then $V$ is a Hilbert space unitary operator,
$V^{-1}=V^*$.
Conversely, if $V$ is a Hilbert space
unitary operator such that \eqref{eq:commutV} holds,
then $V$ is a standard unitary operator.
A general matrix form of an operator $V$ satisfying
\eqref{eq:commutV} is
\begin{equation}
V=V(A,B;J_\fH,J_\cH):=
\begin{pmatrix}A & B \\ -J_\cH B J_\fH & J_\cH A J_\fH
\end{pmatrix}
\label{eq:VABJJ}
\end{equation}
for some $A$, $B\in[\fH,\cH]$ (\cf \eqref{eq:stan}).
If, moreover, $V$ in \eqref{eq:VABJJ} is a Hilbert
space unitary operator, that is, if $V$ is a standard unitary
operator,
then the operators $A$ and $B$ satisfy
(put $C=-J_\cH BJ_\fH$ and $D=J_\cH AJ_\fH$ in \eqref{eq:dkfjrnw0}):
\[
\begin{split}
A^*A+J_\fH B^*BJ_\fH=I_\fH\,,&\quad
AA^*+BB^*=I_\cH\,,
\\
A^*B\in[\fH]\;\;\text{is ($J_\fH$-)self-adjoint,}&\quad
AB^+\in[\cH]\;\;\text{is ($J_\cH$-)self-adjoint.}
\end{split}
\]
The inverse of $V$ is then given by
\[
V^{-1}=V^*=V^+=V(A^*,-B^+;J_\cH,J_\fH)=
\begin{pmatrix}A^* & -B^+ \\ B^* & A^+
\end{pmatrix}\,.
\]
Letting $J_\fH=I_\fH$ or $J_\cH=I_\cH$ or both,
one constructs from the above unitary operators
$\whJ^\circ_\fH\lto\whJ_\cH$ or
$\whJ_\fH\lto\whJ^\circ_\cH$ or
$\whJ^\circ_\fH\lto\whJ^\circ_\cH$,
which are simultaneously Hilbert space unitary operators.
Thus,
a Hilbert space unitary operator
(\cite[Eq.~(3.3)]{Behrndt11})
\[
U_{J}:=V(I,0;J,I)=
V(I,0;I,J)=
\begin{pmatrix}I & 0 \\ 0 & J
\end{pmatrix}
\]
in $\fH^2$ is unitary as an operator
$\whJ_\fH\lto\whJ^\circ_\fH$ or
$\whJ^\circ_\fH\lto\whJ_\fH$.
Note that
\[
V(A,B;J_\fH,J_\cH)=U_{J_\cH}V^\prime\,,\quad
V^\prime=V(A,B;J_\fH,I_\cH)
\]
with
a unitary (and Hilbert unitary)
$V^\prime\co\whJ_\fH\lto\whJ^\circ_\cH$.
The operator $V^\prime$ further decomposes as
\[
V^\prime=V^{\prime\prime}U_{J_\fH}\,,\quad
V^{\prime\prime}=V(A,BJ_\fH;I_\fH,I_\cH)
\]
with a unitary (and Hilbert unitary)
$V^{\prime\prime}\co\whJ^\circ_\fH\lto\whJ^\circ_\cH$.
\end{exam}
Let $V$ be a standard unitary operator as
in \eqref{eq:stan}
(with $A,\ldots,D$ as in \eqref{eq:dkfjrnw0}).
Let $T\in\scrC(\fH)$ be an operator.
Let
\[
W(A,B;T):=(A+BT)\vrt_{\dom T}\,.
\]
Observe that $W(A,0;T)\subseteq A$.
In the definition of $W(\cdots)$ the arguments
may vary.

Let $T^\prime:=V(T)\in\scrC(\cH)$.
Then $T^\prime$ is an operator iff
\begin{equation}
\ker W(A,B;T)\subseteq\ker W(C,D;T)\,.
\label{eq:kerker}
\end{equation}
Assume \eqref{eq:kerker}.
Because $V^{-1}(T^\prime)=T$
(recall \eqref{eq:kerVTdomV}), it follows that
\begin{equation}
\begin{split}
\dom T=\ran W(D^+,-B^+;T^\prime)\,,
\quad
\dom T^\prime=\ran W(A,B;T)\,,
\\
\ran T=\ran W(-C^+,A^+;T^\prime)\,,
\quad
\ran T^\prime=\ran W(C,D;T)\,.
\end{split}
\label{eq:tratarete}
\end{equation}
Moreover,
since $T$ is an operator, it holds
\[
\ker W(D^+,-B^+;T^\prime)\subseteq
\ker W(-C^+,A^+;T^\prime)
\]
which,
in view of \eqref{eq:dkfjrnw0}, is a consequence of
\eqref{eq:kerker}. From \eqref{eq:dkfjrnw0} and
\eqref{eq:tratarete} it follows that
\begin{align*}
W(A,B;T)W(D^+,-B^+;T^\prime)=&I_{\dom T^\prime}\,,
\\
W(D^+,-B^+;T^\prime)W(A,B;T)=&I_{\dom T}\,.
\end{align*}

From the above we conclude that
$T^\prime$ is an operator, \ie
\eqref{eq:kerker} holds, iff
$W(A,B;T)$ is invertible, \ie
$0\notin\sigma_p(W(A,B;T))$. In this
case the inverse operator $W(A,B;T)^{-1}$
is given by
\begin{equation}
\begin{split}
W(A,B;T)^{-1}=&W(D^+,-B^+;T^\prime)\,,
\\
W(D^+,-B^+;T^\prime)^{-1}=&W(A,B;T)\,.
\end{split}
\label{eq:AAAWW}
\end{equation}
Thus,
if $T$ is a densely defined symmetric
operator in a $J$-space, and if
$0\notin\sigma_p(W(A,B;T^+))$, then clearly
$W(A,B;T)$ is also an invertible operator.
Under the present hypothesis, relations in
\eqref{eq:tratarete}, \eqref{eq:AAAWW} hold
with $T$ (resp. $T^\prime$) replaced
by $T^+$ (resp. $T^{\prime\,+}$).

For $0\notin\sigma_p(W(A,B;T))$,
$T^\prime:=V(T)=\phi_V(T)$
is the \textit{linear fractional transformation}
(\lft\!\!) of $T$ induced by $V$; \ie
\[
\phi_V(T):=W(C,D;T)W(A,B;T)^{-1}\,,
\quad
0\notin\sigma_p(W(A,B;T))\,.
\]
Similarly $T$ is the \lft of $T^\prime$ induced
by $V^{-1}=V^+$; \ie $T=\phi_{V^{-1}}(T^\prime)$.

Linear fractional transformations of contractions
$T$ with $0\in\res W(A,B;T)$
are classical objects extensively studied \eg in
\cite{Shmulyan80,Shmulyan80a,Shmulyan78,Krein74}.
\begin{exam}[\lft of $z$]\label{exam:cDVz0}
Let $V=V(A,B;I_\fH,I_\cH)$, $B\neq0$,
be a Hilbert space unitary operator as defined
in \eqref{eq:VABJJ}.
Then $0\notin\sigma_p(W(A,B;z_0I_\fH))$ for
$z_0\in\{-\img,\img\}$.
Moreover $W(A,B;z_0I_\fH)$
is a Hilbert space unitary operator
and $\phi_V(z_0I_\fH)=z_0I_\cH$. Let
$\cD_V(z_0)$ be an open disc in $\bbC$ of
center $z_0\in\{-\img,\img\}$ and radius
$\norm{B}^{-1}$ ($0<\norm{B}\leq1$ is the operator
norm of $B$). For $z\in\bbC_*\mcap\cD_V(z_0)$,
the \lft $\phi_V(zI_\fH)$ of $zI_\fH$ is the Nevanlinna
function $V(zI_\fH)$ induced by the Nevanlinna pair
$(W(A,B;\cdot),W(-B,A;\cdot))$. The reader may look
at \eg
\cite[Definition~3.2]{Behrndt09},
\cite[Definition~2.2]{Behrndt08} for terminology.
\end{exam}
\begin{rem}
There is another representation of $\phi_V$,
which we mention here without proof, as
we do not use it in what follows.
Let $T$ be an operator in a Krein space $\fH$,
let $V$ be a standard unitary operator as in
\eqref{eq:stan}, and let $0\notin\sigma_p(W(A,B;T))$.
Put
\[
T^\prime:=\wtW T W_*\,,\quad
W_*:=W(A,B;T)^{-1}
\]
for some $\wtW\in[\fH,\cH]$.
Then $T^\prime=\phi_V(T)$ iff
$(\wtW-D)T\subseteq C$; take \eg
a Hilbert unitary $V=V(A,0;I_\fH,I_\cH)$
in \eqref{eq:VABJJ} for a quick example.
\end{rem}
Let $V$ be a standard unitary operator as previously
and let $T$ be a densely defined closed operator
in a Krein space $\fH$ such that $0\notin\sigma_p(W(A,B;T))$.
By Lemma~\ref{lem:Torth} $T^\prime=\phi_V(T)$ is
a closed operator in a Krein space $\cH$, which is
not necessarily densely defined. The adjoint
relation $T^{\prime\,+}=V(T^+)$ is an operator iff
\eqref{eq:kerker} holds with $T$ replaced by $T^+$,
\ie iff $0\notin\sigma_p(W(A,B;T^+))$.
Subsequently we have the next result, where
we let $p_1\co\fH^2\lto\fH$,
$(f,f^\prime)\mapsto f$.
\begin{lem}\label{lem:lftTTh}
Let $T$ be a densely defined, closed, symmetric
operator in a Krein space $\fH$. Let
$\Pi_\Gamma=(\fL,\Gamma_0,\Gamma_1)$ be an \obt
for $T^+$.
Let $V$ be a standard unitary operator
as in \eqref{eq:stan} and such that
$0\notin\sigma_p(W(A,B;T^+))$.
\begin{enumerate}[label=\arabic*$^\circ$,
ref=\arabic*$^\circ$]
\item\label{item:lftTTh-1o}
Let $T^\prime:=V(T)$.
Then $T^\prime=\phi_V(T)$ is the \lft of $T$,
which is a densely defined,
closed, symmetric operator in a Krein space $\cH$,
and whose adjoint operator is the \lft of $T^+$,
$T^{\prime\,+}=\phi_V(T^+)$.
The triple
\[
\Pi_{\Gamma^\prime}=
(\fL,\Gamma^\prime_0,\Gamma^\prime_1)\,,
\quad
\Gamma^\prime_i:=\Gamma_iW(A,B;T^+)^{-1}\,,\quad
i\in\{0,1\}
\]
is an \obt for $T^{\prime\,+}$ with the Weyl family
$M_{\Gamma^\prime}$ given by
\[
M_{\Gamma^\prime}(z)=
\{(\Gamma_0f_z,\Gamma_1f_z)\vrt
f_z\in\fN^V_z(T^+) \}\,,\quad z\in\bbC_*\,,
\]
\begin{align*}
\fN^V_z(T^+):=&
p_1(\whfN^V_z(T^+))=\ker p_V(z;T^+)\,,
\\
p_V(z;\cdot):=&
zW(A,B;\cdot)-W(C,D;\cdot)\,.
\end{align*}
\item\label{item:lftTTh-2o}
Consider an operator $T_\Theta\in\mrm{Ext}_T$,
$\Theta\in\scrC(\fL)$, parametrized by
\[
\dom T_\Theta:=\{f\in\dom T^+\vrt
(\Gamma_0f,\Gamma_1f)\in\Theta\}\,.
\]
Then an operator $T^\prime_\Theta\in\mrm{Ext}_{T^\prime}$
parametrized by
\[
\dom T^\prime_\Theta:=\{h\in\dom T^{\prime\,+}\vrt
(\Gamma^\prime_0h,\Gamma^\prime_1h)\in\Theta \}
\]
is the \lft of $T_\Theta$, \ie
$T^\prime_\Theta=\phi_V(T_\Theta)$.
\end{enumerate}
\end{lem}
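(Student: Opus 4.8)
The plan is to read off almost everything from Theorem~\ref{thm:IUBP} together with the linear fractional calculus recorded in \eqref{eq:kerker}--\eqref{eq:AAAWW}, reserving genuine work only for the density of $\dom T^\prime$. First I would check that the hypotheses of Theorem~\ref{thm:IUBP} hold for the standard unitary operator $V$: its domain $\dom V=\fH^2$ is closed and contains $A_*=\dom\Gamma=T^+$, and $\ker V=\{0\}\subseteq\ker\Gamma=T$. Since an \obt is in particular a \ubp, hence an \eubp, the theorem applies and yields at once $T^\prime=V(T)\in\wtscrC_s(\cH)$, the adjoint identity $T^{\prime\,+}=V(T^+)$, the fact that $(\fL,\Gamma^\prime)$ is an \eubp for $T^{\prime\,+}$, and the abstract Weyl formula $M_{\Gamma^\prime}(z)=\Gamma(\whfN^V_z(T^+))$.

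Next I would convert these into the lft picture. Because $W(A,B;T)\subseteq W(A,B;T^+)$ and $0\notin\sigma_p(W(A,B;T^+))$, both operators are injective, so \eqref{eq:kerker} holds for $T$ and for $T^+$; hence $T^\prime$ and $T^{\prime\,+}$ are operators and, by \eqref{eq:AAAWW}, coincide with the lft's $\phi_V(T)$ and $\phi_V(T^+)$. The one point not already on the table is density: as the remark preceding the lemma warns, $\phi_V(T)$ need not be densely defined in general. Here it is, because $\dom T^\prime$ is dense exactly when $T^{\prime\,+}=V(T^+)$ is single-valued, and single-valuedness of $V(T^+)$ is precisely condition \eqref{eq:kerker} for $T^+$, which holds since $\ker W(A,B;T^+)=\{0\}$. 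Thus $T^\prime$ is a densely defined, closed, symmetric operator in $\cH$.

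It remains to identify the boundary data. Since $\Gamma^\prime=\Gamma V^{-1}$ is the composition of the closed operator $\Gamma$ with the everywhere-defined bijection $V^{-1}$, it is a closed operator with $\ran\Gamma^\prime=\ran\Gamma=\fL^2$; being closed it is unitary, so $\Pi_{\Gamma^\prime}$ is an \obt for $T^{\prime\,+}$. Evaluating on $\whf=(f,T^+f)$ and using $V\whf=(W(A,B;T^+)f,\,W(C,D;T^+)f)$ gives $\Gamma^\prime_i=\Gamma_iW(A,B;T^+)^{-1}$. For the Weyl family I would unwind $\whfN^V_z(T^+)=\dom(V\mcap(T^+\times zI))$: its elements are $\whf=(f,T^+f)$ with $V\whf=(h,zh)$, i.e. $W(C,D;T^+)f=zW(A,B;T^+)f$, which is $p_V(z;T^+)f=0$; hence $p_1(\whfN^V_z(T^+))=\ker p_V(z;T^+)=\fN^V_z(T^+)$ and $\Gamma(\whfN^V_z(T^+))=\{(\Gamma_0f,\Gamma_1f)\vrt f\in\fN^V_z(T^+)\}$, as claimed. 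For part~\ref{item:lftTTh-2o}, I would use that the Shmul'yan transform is monotone, so $T^\prime=V(T)\subseteq V(T_\Theta)\subseteq V(T^+)=T^{\prime\,+}$ places $V(T_\Theta)$ in $\mrm{Ext}_{T^\prime}$, and that $W(A,B;T_\Theta)\subseteq W(A,B;T^+)$ is injective, so $\phi_V(T_\Theta)=V(T_\Theta)$; finally, the substitution $h=W(A,B;T^+)f$ transports the boundary condition $(\Gamma_0f,\Gamma_1f)\in\Theta$ into $(\Gamma^\prime_0h,\Gamma^\prime_1h)\in\Theta$, identifying $\dom V(T_\Theta)$ with $\dom T^\prime_\Theta$ and hence $\phi_V(T_\Theta)=T^\prime_\Theta$.

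The step I expect to be the real hinge is the density argument of the second paragraph: the hypothesis $0\notin\sigma_p(W(A,B;T^+))$ must be seen to do double duty --- the kernel condition for $T$ makes $T^\prime$ an operator, while the less obvious kernel condition for $T^+$ is what upgrades the closed operator $\phi_V(T)$ to a densely defined one. Everything else is a mechanical transcription of Theorem~\ref{thm:IUBP} and \eqref{eq:tratarete}--\eqref{eq:AAAWW}.
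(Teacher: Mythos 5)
Your proposal is correct and takes essentially the same route as the paper: part~\ref{item:lftTTh-1o} is precisely the paper's application of Theorem~\ref{thm:IUBP} together with the lft calculus \eqref{eq:kerker}--\eqref{eq:AAAWW} set up immediately before the lemma (including your density argument, which is exactly the paper's preceding remark that $T^{\prime\,+}=V(T^+)$ is an operator iff \eqref{eq:kerker} holds with $T^+$ in place of $T$, trivially so since $\ker W(A,B;T^+)=\{0\}$), capped by the identification $\whfN^V_z(T^+)=\{(f_z,T^+f_z)\vrt f_z\in\ker p_V(z;T^+)\}$. Your part~\ref{item:lftTTh-2o} --- well-definedness of $\phi_V(T_\Theta)$ from $0\notin\sigma_p(W(A,B;T^+))$, the substitution $h=W(A,B;T^+)f$ identifying $\dom T^\prime_\Theta$ with $\dom\phi_V(T_\Theta)$, and the agreement of both with $T^{\prime\,+}$ on that common domain --- matches the paper's proof in substance.
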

\begin{proof}
\ref{item:lftTTh-1o}
To accomplish the proof it remains
to apply Theorem~\ref{thm:IUBP} with
\[
\whfN^V_z(T^+)=\{(f_z,T^+f_z))\vrt
f_z\in\ker p_V(z;T^+) \}\,.
\]

\ref{item:lftTTh-2o}
Since $0\notin\sigma_p(W(A,B;T^+))$
implies $0\notin\sigma_p(W(A,B;T_\Theta))$,
$\phi_V(T_\Theta)$ is well-defined.
By the definition of the \lft $\phi_V$ we have
\[
\dom \phi_V(T_\Theta)=
W(A,B;T_\Theta)\dom T_\Theta
=\ran W(A,B;T_\Theta)\,.
\]
On the other hand
\[
\dom T^\prime_\Theta=W(A,B;T^+)\dom T_\Theta=
W(A,B;T_\Theta)\dom T_\Theta
\]
so $\dom T^\prime_\Theta=\dom \phi_V(T_\Theta)$.
Finally, by the above
$(\forall h\in\dom T^\prime_\Theta)$
\[
W(A,B;T^+)^{-1}h=W(A,B;T_\Theta)^{-1}h
\]
and then
\begin{align*}
T^\prime_\Theta h=&
T^{\prime\,+}h=
\phi_V(T^+)h=W(C,D;T^+)W(A,B;T_\Theta)^{-1}h
\\
=&W(C,D;T_\Theta)W(A,B;T_\Theta)^{-1}h=\phi_V(T_\Theta)h
\end{align*}
as claimed.
\end{proof}
We emphasize that $p_V(z;T^+)$ explicitly reads
\[
p_V(z;T^+)=zA-C+(zB-D)T^+
\]
and is defined on $\dom p_V(z;T^+)=\dom T^+$.
%%%%%%%%%%%%%%%%%%%%%%%%%%%%%%%%%%%%%%%%%%%%%%%%%%%%%%%%%%%%%%
%%%%%%%%%%%%%%%%%%%%%%%%%%%%%%%%%%%%%%%%%%%%%%%%%%%%%%%%%%%%%%
%%%%%%%%%%%%%%%%%%%%%%%%%%%%%%%%%%%%%%%%%%%%%%%%%%%%%%%%%%%%%%
%%%%%%%%%%%%%%%%%%%%%%%%%%%%%%%%%%%%%%%%%%%%%%%%%%%%%%%%%%%%%%
\subsubsection{Weyl function of transformed
\texorpdfstring{\obt}{} }
%%%%%%%%%%%%%%%%%%%%%%%%%%%%%%%%%%%%%%%%%%%%%%%%%%%%%%%%%%%%%%
%%%%%%%%%%%%%%%%%%%%%%%%%%%%%%%%%%%%%%%%%%%%%%%%%%%%%%%%%%%%%%
%%%%%%%%%%%%%%%%%%%%%%%%%%%%%%%%%%%%%%%%%%%%%%%%%%%%%%%%%%%%%%
%%%%%%%%%%%%%%%%%%%%%%%%%%%%%%%%%%%%%%%%%%%%%%%%%%%%%%%%%%%%%%
Let $T_0:=\ker\Gamma_0$ and
$T^\prime_0:=\ker\Gamma^\prime_0$, so that
$T^\prime_0=\phi_V(T_0)$.
We characterize the Weyl family (now Weyl function)
$M_{\Gamma^\prime}$
based on hypothesis that the set
\[
\rho_V:=\res T_0\mcap\res T^\prime_0
\]
is nonempty.
The next lemma expresses $\res T^\prime_0$
in terms of $V$ and $T_0$.
\begin{lem}\label{lem:IUBP3-pre}
Assume the notation and hypotheses as in
Lemma~\ref{lem:lftTTh}.
\begin{enumerate}[label=\arabic*$^\circ$,
ref=\arabic*$^\circ$]
\item\label{item:IUBP3-pre-1o}
$z\in\res T^\prime_0$ iff $0\in\res p_V(z;T_0)$.
\item\label{item:IUBP3-pre-2o}
Suppose $\bbC_*\mcap\res T^\prime_0\neq\emptyset$.
$\Gamma_0$ maps $\fN^V_z(T^+)$
bijectively onto $\fL$ for $z\in\res T^\prime_0$,
and the Weyl function of $T^\prime$ is given by
\[
M_{\Gamma^\prime}(z)=\Gamma_1\gamma^V_\Gamma(z)
\in[\fL]\,,
\quad
\gamma^V_\Gamma(z):=(\Gamma_0\vrt_{\fN^V_z(T^+)})^{-1}
\in[\fL,\fH]
\]
for $z\in\bbC_*\mcap\res T^\prime_0$.
\end{enumerate}
\end{lem}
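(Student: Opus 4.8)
The plan is to derive both statements from the transformed ordinary boundary triple $\Pi_{\Gamma^\prime}=(\fL,\Gamma^\prime_0,\Gamma^\prime_1)$ for $T^{\prime\,+}$ supplied by Lemma~\ref{lem:lftTTh}, transporting the conclusions back along the bijection $W(A,B;T^+)$. The single computation behind everything is that, for $(f,T_0f)\in T_0$, the vector $V(f,T_0f)=(h,T^\prime_0h)$ lies in $T^\prime_0$ with $h=W(A,B;T_0)f$ and $T^\prime_0h=W(C,D;T_0)f$, so that
\[
p_V(z;T_0)f=zW(A,B;T_0)f-W(C,D;T_0)f=-(T^\prime_0-zI)h\,,
\]
and the same identity holds with $T_0$, $T^\prime_0$ replaced by $T^+$, $T^{\prime\,+}$. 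I would package this through the block operator
\[
\Xi_z:=\begin{pmatrix} zA-C & zB-D \\ A & B\end{pmatrix}=\begin{pmatrix}-I & zI \\ 0 & I\end{pmatrix}\begin{pmatrix}0 & I \\ I & 0\end{pmatrix}V\in[\fH^2,\cH^2]\,,
\]
which is boundedly invertible (a product of boundedly invertible factors) and maps the graph $T_0$ onto $\{(-(T^\prime_0-zI)h,h)\vrt h\in\dom T^\prime_0\}$.

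For \ref{item:IUBP3-pre-1o} I would first note that $W(A,B;T_0)\co\dom T_0\to\dom T^\prime_0$ is a bijection: it is injective since $0\notin\sigma_p(W(A,B;T^+))$ and $W(A,B;T_0)\subseteq W(A,B;T^+)$, and onto by \eqref{eq:tratarete}. The displayed identity then gives $\ker p_V(z;T_0)=W(A,B;T_0)^{-1}\fN_z(T^\prime_0)$ and $\ran p_V(z;T_0)=\ran(T^\prime_0-zI)$, so $p_V(z;T_0)$ is a bijection of $\dom T_0$ onto $\cH$ iff $T^\prime_0-zI$ is a bijection of $\dom T^\prime_0$ onto $\cH$. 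To reconcile the two bounded inverses, if $z\in\res T^\prime_0$ I would read off $f=D^+h-B^+T^\prime_0h$ (the first component of $V^{-1}(h,T^\prime_0h)$) and substitute $h=-(T^\prime_0-zI)^{-1}k$, $T^\prime_0h=zh-k$ to obtain the explicit formula
\[
p_V(z;T_0)^{-1}=B^+-(D^+-zB^+)(T^\prime_0-zI)^{-1}\in[\cH,\fH]\,,
\]
whence $0\in\res p_V(z;T_0)$. Conversely, bijectivity of $p_V(z;T_0)$ forces $T^\prime_0-zI$ to be bijective, and since $T^\prime_0$ is a closed operator its everywhere-defined inverse is automatically bounded by the closed graph theorem, i.e.\ $z\in\res T^\prime_0$.

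For \ref{item:IUBP3-pre-2o}, fix $z\in\res T^\prime_0$ (nonempty in $\bbC_*$ by hypothesis). Since $\ran(T^\prime_0-zI)=\ran(T^{\prime\,+}-zI)=\cH$, Lemma~\ref{lem:Behrndt20} yields $T^{\prime\,+}=T^\prime_0\hsum\whfN_z(T^{\prime\,+})$; as $\Pi_{\Gamma^\prime}$ is an \obt with $\ker\Gamma^\prime_0=T^\prime_0$ and $\Gamma^\prime_0$ surjective, the standard ordinary-boundary-triple argument \cites{Derkach06,Derkach17} shows that $\Gamma^\prime_0$ maps $\fN_z(T^{\prime\,+})$ bijectively onto $\fL$, that $\gamma^\prime(z):=(\Gamma^\prime_0\vrt_{\fN_z(T^{\prime\,+})})^{-1}\in[\fL,\cH]$, and that $M_{\Gamma^\prime}(z)=\Gamma^\prime_1\gamma^\prime(z)\in[\fL]$. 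I would then transport this to the $f$-side: the identity of the first paragraph (with $T^+$) shows $W(A,B;T^+)$ maps $\fN^V_z(T^+)=\ker p_V(z;T^+)$ bijectively onto $\fN_z(T^{\prime\,+})$, and $\Gamma^\prime_i=\Gamma_iW(A,B;T^+)^{-1}$ gives $\Gamma^\prime_ih_z=\Gamma_if_z$ for $h_z=W(A,B;T^+)f_z$. Hence $\Gamma_0\vrt_{\fN^V_z(T^+)}$ is a bijection onto $\fL$ and $M_{\Gamma^\prime}(z)=\{(\Gamma_0f_z,\Gamma_1f_z)\}=\Gamma_1\gamma^V_\Gamma(z)$ with $\gamma^V_\Gamma(z)=(\Gamma_0\vrt_{\fN^V_z(T^+)})^{-1}$, as claimed.

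The main obstacle I anticipate is the boundedness of $\gamma^V_\Gamma(z)$ \emph{into} $\fH$: the naive relation $\gamma^V_\Gamma(z)=W(A,B;T^+)^{-1}\gamma^\prime(z)$ involves the unbounded $W(A,B;T^+)^{-1}$. This is resolved by observing that on $\fN_z(T^{\prime\,+})$ one has $T^{\prime\,+}h_z=zh_z$, so that $W(A,B;T^+)^{-1}$ acts there as the bounded operator $D^+-zB^+$ (the first row of $V^{-1}$ evaluated at $(h_z,zh_z)$); explicitly $f_z=(D^+-zB^+)h_z$, giving $\gamma^V_\Gamma(z)=(D^+-zB^+)\gamma^\prime(z)\in[\fL,\fH]$. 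The parallel subtlety in \ref{item:IUBP3-pre-1o}---that $p_V(z;T_0)$ need not be closed, so the closed graph theorem is not directly available for it---is exactly why I route the boundedness of $p_V(z;T_0)^{-1}$ through the explicit resolvent formula above, applying the closed graph theorem only to the genuinely closed operator $T^\prime_0-zI$.
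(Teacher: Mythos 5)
Your proof is correct and takes essentially the same route as the paper's: part \ref{item:IUBP3-pre-1o} rests on the identity $T^\prime_0-z=\phi_V(T_0)-z=-p_V(z;T_0)W(A,B;T_0)^{-1}$ (which is exactly what your $\Xi_z$ computation encodes), and part \ref{item:IUBP3-pre-2o} transports the standard ordinary-boundary-triple facts for $\Pi_{\Gamma^\prime}$ back through $\fN^V_z(T^+)=W(A,B;T^+)^{-1}\fN_z(T^{\prime\,+})$ together with $\Gamma^\prime_i=\Gamma_iW(A,B;T^+)^{-1}$, precisely as in the paper. Your explicit formulas $p_V(z;T_0)^{-1}=B^+-(D^+-zB^+)(T^\prime_0-zI)^{-1}$ and $\gamma^V_\Gamma(z)=(D^+-zB^+)\gamma^\prime(z)$ are a welcome elaboration that makes explicit the boundedness assertions the paper's terse proof leaves implicit.
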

\begin{proof}
\ref{item:IUBP3-pre-1o}
This follows from
\[
T^\prime_0-zI_\cH=\phi_V(T_0)-zI_\cH
=-p_V(z;T_0)W(A,B;T_0)^{-1}\,,\quad z\in\bbC\,.
\]
\ref{item:IUBP3-pre-2o}
$\fN^V_z(T^+)$ consists of
$f\in\dom T^+$ such that
$p_V(z;T^+)f=0$. Thus
\[
\fN^V_z(T^+)=W(A,B;T^+)^{-1}\fN_z(T^{\prime\,+})\,.
\]
Since $\Gamma^\prime_0:=\Gamma_0W(A,B;T^+)^{-1}$
maps $\fN_z(T^{\prime\,+})$ bijectively onto $\fL$
for $z\in\res T^\prime_0$,
it follows that $\Gamma_0$ maps $\fN^V_z(T^+)$
bijectively onto $\fL$ for $z\in\res T^\prime_0$.
\end{proof}
\begin{thm}\label{thm:IUBP3}
Assume the notation and hypotheses as in
Lemma~\ref{lem:lftTTh}, and let $\gamma_\Gamma$
and $M_\Gamma$ be the $\gamma$-field and the
Weyl family corresponding to an \obt $\Pi_\Gamma$
for $T^+$.
Suppose $\bbC_*\mcap\rho_V\neq\emptyset$.
Then the Weyl function $M_{\Gamma^\prime}$
corresponding to an \obt
$\Pi_{\Gamma^\prime}$ for $T^{\prime\,+}$ is given
on $\fL$ by
\[
M_{\Gamma^\prime}(z)=
M_\Gamma(z)+\Delta^V_\Gamma(z)\,,
\quad z\in\rho_V\,,
\]
\begin{align*}
\Delta^V_\Gamma(z):=&
-\Gamma_1 p_V(z;T_0)^{-1}p_V(z)\gamma_\Gamma(z)\,,
\\
p_V(z):=&
p_V(z;zI_\fH)=z^2B+z(A-D)-C
\,.
\end{align*}
\end{thm}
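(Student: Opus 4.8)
The plan is to reduce the identity to a pointwise computation on the defect elements, using the $\gamma$-field descriptions of the two Weyl functions furnished by Lemma~\ref{lem:IUBP3-pre}. Fix $z\in\rho_V=\res T_0\mcap\res T^\prime_0$ and an arbitrary $l\in\fL$. Since $z\in\res T_0$, the operator $\Gamma_0$ restricts to a bijection of $\fN_z(T^+)$ onto $\fL$, and since $z\in\res T^\prime_0$ the same holds for $\fN^V_z(T^+)$ by Lemma~\ref{lem:IUBP3-pre}-\ref{item:IUBP3-pre-2o}. Accordingly I would set $g:=\gamma_\Gamma(z)l\in\fN_z(T^+)$ and $f:=\gamma^V_\Gamma(z)l\in\fN^V_z(T^+)$, so that $\Gamma_0 g=\Gamma_0 f=l$ while $M_\Gamma(z)l=\Gamma_1 g$ and $M_{\Gamma^\prime}(z)l=\Gamma_1 f$. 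The key structural observation is then that $h:=f-g$ satisfies $\Gamma_0 h=0$, whence $(h,T^+h)\in\ker\Gamma_0=T_0$; in particular $h\in\dom T_0$ and $T^+h=T_0h$.

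The core of the argument is an algebraic identity for the pencil $p_V(z;\cdot)=zW(A,B;\cdot)-W(C,D;\cdot)$. First I would record that on the eigenelement $g$ the substitution $T^+g=zg$ collapses $p_V(z;T^+)g$ to the constant-coefficient pencil, namely $p_V(z;T^+)g=(z^2B+z(A-D)-C)g=p_V(z)g$, directly from $W(A,B;T^+)=(A+BT^+)\vrt_{\dom T^+}$. Next, because $h\in\dom T_0$ and $T_0\subseteq T^+$ is a restriction, one has $p_V(z;T^+)h=p_V(z;T_0)h$. Finally $f\in\fN^V_z(T^+)=\ker p_V(z;T^+)$ by Lemma~\ref{lem:lftTTh}-\ref{item:lftTTh-1o}, so applying the linear map $p_V(z;T^+)$ to the decomposition $f=g+h$ yields $0=p_V(z)g+p_V(z;T_0)h$.

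To solve for $h$ I would invoke Lemma~\ref{lem:IUBP3-pre}-\ref{item:IUBP3-pre-1o}: since $z\in\res T^\prime_0$, one has $0\in\res p_V(z;T_0)$, so $p_V(z;T_0)^{-1}$ is everywhere defined and bounded and $h=-p_V(z;T_0)^{-1}p_V(z)g$. Applying $\Gamma_1$ to $f=g+h$ then gives
\[
M_{\Gamma^\prime}(z)l-M_\Gamma(z)l=\Gamma_1 h
=-\Gamma_1 p_V(z;T_0)^{-1}p_V(z)g
=-\Gamma_1 p_V(z;T_0)^{-1}p_V(z)\gamma_\Gamma(z)l
=\Delta^V_\Gamma(z)l.
\]
As $l\in\fL$ is arbitrary and each factor on the right is everywhere defined and bounded, the identity holds on all of $\fL$, which is the assertion.

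I expect the delicate points to be bookkeeping rather than substance: keeping the domains straight, so that $h$ genuinely lands in $\dom T_0$ and $T^+$ may legitimately be replaced by $T_0$ in the pencil, and the verification that $p_V(z;T^+)$ restricted to $\fN_z(T^+)$ reduces to the scalar pencil $p_V(z)$. Both are immediate from the definitions of $W(A,B;\cdot)$ and of $\fN^V_z(T^+)$. The genuine analytic input is entirely absorbed into Lemma~\ref{lem:IUBP3-pre}, which converts $z\in\res T^\prime_0$ into the bounded invertibility of $p_V(z;T_0)$ and into the bijectivity of $\Gamma_0$ on $\fN^V_z(T^+)$; the remainder is linear algebra on graphs.
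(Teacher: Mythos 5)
Your proof is correct and follows essentially the same route as the paper: both rest on decomposing $\gamma^V_\Gamma(z)l=f_0+\gamma_\Gamma(z)l$ with $f_0\in\dom T_0$ (you obtain $f_0=f-g$ from $\Gamma_0(f-g)=0$, the paper invokes Lemma~\ref{lem:Behrndt20} and matches $\gamma$-fields), then applying the pencil to get $0=p_V(z;T_0)f_0+p_V(z)\gamma_\Gamma(z)l$ and inverting via Lemma~\ref{lem:IUBP3-pre}. The only cosmetic difference concerns real points of $\rho_V$: since $M_\Gamma$ and $\gamma_\Gamma$ are defined in the paper only for $z\in\bbC_*$, the paper first proves the formula on $\bbC_*\mcap\rho_V$ and extends to all of $\rho_V$ by analyticity of $\gamma^V_\Gamma$, whereas you argue pointwise throughout, which is equally valid provided the $\gamma$-field at real $z\in\res T_0$ is understood through the same bijectivity formula $(\Gamma_0\vrt_{\fN_z(T^+)})^{-1}$.
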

\begin{proof}
By Lemma~\ref{lem:Behrndt20}
$f\in\fN^V_z(T^+)\subseteq\dom T^+$ is of the form
$f=f_0+f_z$ for some $f_0\in\dom T_0$,
$f_z\in\fN_z(T^+)$, and $z\in\res T_0$, so
by Lemma~\ref{lem:IUBP3-pre}-\ref{item:IUBP3-pre-2o}
$(\forall l\in\fL)$
\[
\gamma^V_\Gamma(z)l=f_0+f_z\,,\quad
f_z=\gamma_\Gamma(z)l\,,\quad
z\in\rho_V\,.
\]
On the other hand
\[
0=p_V(z;T^+)(f_0+f_z)=p_V(z;T_0)f_0+p_V(z)f_z
\]
so by Lemma~\ref{lem:IUBP3-pre}-\ref{item:IUBP3-pre-1o}
\[
f_0=-p_V(z;T_0)^{-1}p_V(z)f_z
\]
and then
\[
\gamma^V_\Gamma(z)=
[I-p_V(z;T_0)^{-1}p_V(z)]\gamma_\Gamma(z)\,.
\]
This proves $M_{\Gamma^\prime}(z)$
as stated for $z\in\bbC_*\mcap\rho_V$.
Since $\gamma^V_\Gamma$ is analytic
on $\rho_V$, $M_{\Gamma^\prime}(z)$
extends to $z\in\rho_V$.
\end{proof}
For $J_\fH=I_\fH$ and $J_\cH=I_\cH$, a classical
result due to \cite{Langer77} states that
the Weyl functions of unitarily
similar simple symmetric operators coincide.
The next proposition gives a necessary and sufficient
condition to have $M_{\Gamma^\prime}(z)=M_\Gamma(z)$
for $z\in\rho_V$ in a Krein space setting.
\begin{cor}\label{cor:Delta0}
$\Delta^V_\Gamma(z)=0$
iff $\fN_z(T^+)\subseteq\ker p_V(z)$.
\end{cor}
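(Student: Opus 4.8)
The plan is to read $\Delta^V_\Gamma(z)=0$ straight off the factored form
$\Delta^V_\Gamma(z)=-\Gamma_1 p_V(z;T_0)^{-1}p_V(z)\gamma_\Gamma(z)$ supplied by Theorem~\ref{thm:IUBP3}, while keeping in mind two structural facts valid for $z\in\rho_V$: first, $\gamma_\Gamma(z)$ maps $\fL$ bijectively onto $\fN_z(T^+)$, so that $\ran(p_V(z)\gamma_\Gamma(z))=p_V(z)\fN_z(T^+)$; second, $0\in\res p_V(z;T_0)$ by Lemma~\ref{lem:IUBP3-pre}-\ref{item:IUBP3-pre-1o}, so $p_V(z;T_0)^{-1}$ is everywhere defined and injective. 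Sufficiency is then immediate: if $\fN_z(T^+)\subseteq\ker p_V(z)$, then $p_V(z)\gamma_\Gamma(z)=0$ and hence $\Delta^V_\Gamma(z)=0$.

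For necessity I would first turn the vanishing of $\Gamma_1$ into a membership in $\dom T$. Put $\Lambda:=\gamma^V_\Gamma(z)-\gamma_\Gamma(z)=-p_V(z;T_0)^{-1}p_V(z)\gamma_\Gamma(z)$. Both $\gamma$-fields satisfy $\Gamma_0\gamma^V_\Gamma(z)=\Gamma_0\gamma_\Gamma(z)=I_\fL$, so $\Gamma_0\Lambda=0$; assuming in addition $\Delta^V_\Gamma(z)=\Gamma_1\Lambda=0$, every value $\Lambda l$ obeys $(\Lambda l,T^+\Lambda l)\in\ker\Gamma_0\mcap\ker\Gamma_1=T$, that is $\Lambda l\in\dom T$. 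Thus $\ran\Lambda\subseteq\dom T$, and since $p_V(z;T_0)^{-1}$ is injective and $\gamma_\Gamma(z)$ is onto $\fN_z(T^+)$, it remains only to upgrade $\ran\Lambda\subseteq\dom T$ to $\Lambda=0$, which is equivalent to $p_V(z)\fN_z(T^+)=\{0\}$, i.e. to the asserted inclusion.

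This upgrade is the one genuinely nontrivial step, and I expect it to be the main obstacle: one must rule out that $\ran\Lambda$ sit nontrivially inside $\dom T$, i.e. establish the transversality $\ran\Lambda\mcap\dom T=\{0\}$. My proposed route is to transport the situation to the transformed triple $\Pi_{\Gamma^\prime}$ through the operator $W(A,B;T^+)$, which is injective because $0\notin\sigma_p(W(A,B;T^+))$: it carries $\dom T$ onto $\dom T^\prime$, $\dom T_0$ onto $\dom T^\prime_0$, satisfies $\gamma_{\Gamma^\prime}(z)=W(A,B;T^+)\gamma^V_\Gamma(z)$, and sends $\fN_z(T^+)$ into $\dom T^{\prime\,+}$. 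Since $z\in\rho_V\subseteq\res T^\prime_0$, Lemma~\ref{lem:Behrndt20} furnishes the von Neumann decomposition $\dom T^{\prime\,+}=\dom T^\prime_0\hsum\fN_z(T^{\prime\,+})$, in which $\ran\Lambda\subseteq\dom T$ says precisely that the $\dom T^\prime_0$-component of $W(A,B;T^+)\fN_z(T^+)$ lands in $\dom T^\prime$; the injectivity of $T^\prime_0-zI$ (again $z\in\res T^\prime_0$) should then force that component, and with it $\Lambda$, to vanish. An alternative that closes the gap cleanly when $\ol{z}\in\rho_V$ as well is to use the symmetries $M_\Gamma(z)^*=M_\Gamma(\ol{z})$ and $M_{\Gamma^\prime}(z)^*=M_{\Gamma^\prime}(\ol{z})$ together with the abstract Green identity to rewrite $\Delta^V_\Gamma(z)$ via an adjoint $\gamma$-field, whose injectivity on $\ran(p_V(z)\gamma_\Gamma(z))$ yields $p_V(z)\fN_z(T^+)=\{0\}$ at once.
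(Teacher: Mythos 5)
Your easy direction and your first reduction agree with the paper's proof: there, too, $\Delta^V_\Gamma(z)=0$ is converted, via $\dom T=\ker\Gamma_0\mcap\ker\Gamma_1$ and $\ran\gamma_\Gamma(z)=\fN_z(T^+)$, into $p_V(z;T_0)^{-1}p_V(z)\fN_z(T^+)\subseteq\dom T$, equivalently $p_V(z)\fN_z(T^+)\subseteq\ran(T^\prime-zI)=\ran p_V(z;T)$. The genuine gap sits exactly at the upgrade you flag, and neither of your two mechanisms closes it. In the decomposition $\dom T^{\prime\,+}=\dom T^\prime_0\hsum\fN_z(T^{\prime\,+})$, the hypothesis $\Delta^V_\Gamma(z)=0$ tells you only that the $\dom T^\prime_0$-component of each element of $K:=W(A,B;T^+)\fN_z(T^+)$ lies in $\dom T^\prime$; the injectivity of $T^\prime_0-zI$ is already spent in the directness of that decomposition and cannot force this component to vanish---a nonzero vector of $\dom T^\prime$ is a perfectly admissible $\dom T^\prime_0$-component. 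What you know about $K$ is that it is a complement of $\dom T^\prime_0$ in $\dom T^{\prime\,+}$ contained in $\dom T^\prime\hsum\fN_z(T^{\prime\,+})$, and what you need is $K=\fN_z(T^{\prime\,+})$; as pure linear algebra this fails, since for any nonzero linear $\tau\co\fN_z(T^{\prime\,+})\lto\dom T^\prime$ the tilted complement $\{n+\tau n\vrt n\in\fN_z(T^{\prime\,+})\}$ enjoys all the listed properties. Your fallback via the adjoint $\gamma$-field relabels the same gap: since $\ker\gamma_{\Gamma^\prime}(\ol{z})^+=\fN_{\ol{z}}(T^{\prime\,+})^{[\bot]}=\ol{\ran}(T^\prime-zI)$, the asserted injectivity of the adjoint $\gamma$-field on $\ran(p_V(z)\gamma_\Gamma(z))$ is verbatim the transversality $p_V(z)\fN_z(T^+)\mcap\ol{\ran}(T^\prime-zI)=\{0\}$ that was to be proved.

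For comparison, the paper rewrites the inclusion as $p_V(z;T^+)\fN_z(T^+)\subseteq p_V(z;T^+)\dom T$ and appeals to an elementary disjointness lemma with $L=p_V(z;T^+)$, $\fM=\fN_z(T^+)$, $\fN=\dom T$, invoking the two intersections $\fM\mcap\fN=\{0\}$ (from $z\in\res T_0$) and $\fN\mcap\ker L=\{0\}$ (from $z\in\res T^\prime_0$ and injectivity of $W(A,B;T)$), neither of which your argument brings into play. Be aware, though, that your obstacle is not an artifact of your route: those two conditions together with $L\fM\subseteq L\fN$ still do not give $\fM\subseteq\ker L$ (take $L$ the orthogonal projection onto the first coordinate of $\bbC^2$, $\fM=\spn\{(1,2)\}$, $\fN=\spn\{(1,1)\}$, $\ker L=\spn\{(0,1)\}$), and in fact the ``only if'' direction can fail. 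Fix $z\in\bbC_*$, take $J=I$, $\fH=L^2(0,1)$, $T=-\img\,\dd/\dd x$ on $H^1_0(0,1)$ with an \obt $\Pi_\Gamma$ for $T^*$, and the shear $V=\bigl(\begin{smallmatrix} I & 0 \\ C & I \end{smallmatrix}\bigr)$ with $C=\varepsilon\braket{\cdot,\psi}\psi$, $\varepsilon\in\bbR\setm\{0\}$, a standard unitary operator satisfying \eqref{eq:dkfjrnw0} with $W(A,B;T^*)=I$, $T^\prime=T+C$, $p_V(z)=-C$, $\rho_V\supseteq\bbC_*$; the agreed chain of equivalences then reads: $\Delta^V_\Gamma(z)=0$ iff $C\fN_z(T^*)\subseteq\ran(T+C-zI)=\fN_{\ol{z}}(T^*+C)^\bot$. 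Choosing $\psi\neq0$ with $\psi\perp\fN_{\ol{z}}(T^*)$ and $\braket{f_z,\psi}\neq0$ for $f_z$ spanning $\fN_z(T^*)$ (possible, as $\fN_z(T^*)$ and $\fN_{\ol{z}}(T^*)$ are distinct lines) gives $C\fN_{\ol z}(T^*)=\{0\}$, hence $\fN_{\ol{z}}(T^*+C)=\fN_{\ol{z}}(T^*)$ and $\Delta^V_\Gamma(z)=0$, while $C\fN_z(T^*)=\spn\{\psi\}\neq\{0\}$, that is, $\fN_z(T^*)\not\subseteq\ker p_V(z)$. So the step you could not close is genuinely unclosable at this level of generality, and the necessity direction requires hypotheses beyond $z\in\rho_V$ and the transversality bookkeeping appearing in your proposal and in the lemma as stated.
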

\begin{proof}
Using $\ran\gamma_\Gamma(z)=\fN_z(T^+)$ and
$\dom T=\ker\Gamma_0\mcap\ker\Gamma_1$ it holds
\begin{align*}
\Delta^V_\Gamma(z)=0\quad\Leftrightarrow\quad&
p_V(z;T_0)^{-1}p_V(z)\fN_z(T^+)\subseteq\dom T
\\
\Leftrightarrow\quad&
(T^\prime_0-z)^{-1}p_V(z)\fN_z(T^+)\subseteq\dom T^\prime
\\
\Leftrightarrow\quad&
p_V(z)\fN_z(T^+)\subseteq\ran(T^\prime-zI_\cH)\,.
\end{align*}
Since
\[
\ran(T^\prime-zI_\cH)=\ran p_V(z;T)
\]
we conclude that
$\Delta^V_\Gamma(z)=0$ iff
\[
p_V(z)\fN_z(T^+)=p_V(z;T^+)\fN_z(T^+)
\subseteq p_V(z;T^+)\dom T\,.
\]
Using that $\fN_z(T^+)$ and $\dom T$,
as well as $\ker p_V(z;T^+)$ and $\dom T$,
are disjoint sets,
we show that the above inclusion holds iff
$p_V(z)\fN_z(T^+)=\{0\}$.

Let $L$ be an operator in $\fH$, and let
$\fM$, $\fN\subseteq\dom L$ be disjoint
subsets, and such that $L\fM\subseteq L\fN$.
Since $(\forall m\in\fM)$ $(\exists n\in\fN)$
$m-n\in\ker L$, it follows that
$\fM\subseteq\fN+\ker L$. Thus, if the latter
sum is direct, \ie if $\fN\mcap\ker L=\{0\}$,
then $\fM\mcap\fN=\{0\}$ implies that
$\fM\subseteq\ker L$.

By the above, $\Delta^V_\Gamma(z)=0$ implies
$\fN_z(T^+)\subseteq\ker p_V(z)$. The converse
implication is clear.
\end{proof}
Under additional hypotheses there is another
variant of Corollary~\ref{cor:Delta0}. Below
we use $\norm{\cdot}$ for the operator norm.
\begin{cor}\label{cor:Delta0b}
Suppose $(\exists z_0\in\bbC)$
$0\in\res W(A,B;z_0I_\fH)$, and let $\cD_V(z_0)$ denote an
open disc in $\bbC$ of center $z_0$ and radius
$\norm{BW(A,B;z_0I_\fH)^{-1}}^{-1}$.
If $z\in\cD_V(z_0)\mcap\rho_V\neq\emptyset$ then
$\Delta^V_\Gamma(z)=0$ iff
$W(A,B;zI_\fH)\fN_z(T^+)\subseteq\fN_z(\phi_V(zI_\fH))$.
\end{cor}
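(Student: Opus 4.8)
The plan is to reduce the claim to Corollary~\ref{cor:Delta0}, which already supplies $\Delta^V_\Gamma(z)=0$ iff $\fN_z(T^+)\subseteq\ker p_V(z)$. It therefore suffices to show that, for $z\in\cD_V(z_0)\mcap\rho_V$,
\[
\fN_z(T^+)\subseteq\ker p_V(z)
\quad\Leftrightarrow\quad
W(A,B;z)\fN_z(T^+)\subseteq\fN_z(\phi_V(z))\,.
\]
Here $W(A,B;z)=A+zB\in[\fH,\cH]$ is everywhere defined, since $\dom(zI_\fH)=\fH$. The key structural fact I would establish is that, on the disc $\cD_V(z_0)$, the operator $W(A,B;z)$ is boundedly invertible, so that $\phi_V(z)=W(C,D;z)W(A,B;z)^{-1}\in[\cH]$ is well defined and $W(A,B;z)$ is a bijection $\fH\lto\cH$.

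First I would verify the invertibility of $W(A,B;z)$ on $\cD_V(z_0)$. Writing
\[
W(A,B;z)=A+zB=
\bigl(I_\cH+(z-z_0)BW(A,B;z_0)^{-1}\bigr)W(A,B;z_0)
\]
and noting that the factoring is performed \emph{on the right}, the bracketed factor in $[\cH]$ is invertible by the Neumann series precisely when $\abs{z-z_0}\norm{BW(A,B;z_0)^{-1}}<1$; since $0\in\res W(A,B;z_0)$ by hypothesis, $W(A,B;z)$ is then boundedly invertible for $z\in\cD_V(z_0)$. Next I would compute, directly from $\phi_V(z)=(C+zD)(A+zB)^{-1}$ and $p_V(z)=z^2B+z(A-D)-C$,
\[
\phi_V(z)-zI_\cH=
\bigl[(C+zD)-z(A+zB)\bigr]W(A,B;z)^{-1}=-p_V(z)W(A,B;z)^{-1}\,.
\]
Consequently, because $W(A,B;z)^{-1}$ is a bijection $\cH\lto\fH$,
\[
\fN_z(\phi_V(z))=\ker\bigl(p_V(z)W(A,B;z)^{-1}\bigr)=W(A,B;z)\ker p_V(z)\,.
\]

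Finally, since $W(A,B;z)$ is injective, the inclusion $W(A,B;z)\fN_z(T^+)\subseteq W(A,B;z)\ker p_V(z)$ holds iff $\fN_z(T^+)\subseteq\ker p_V(z)$, which is exactly the equivalence sought; combined with Corollary~\ref{cor:Delta0} this proves the corollary. The only place requiring care—the main, if modest, obstacle—is the bookkeeping in the invertibility step: one must factor $W(A,B;z)$ on the right rather than on the left, so that the Neumann series converges on precisely the disc of radius $\norm{BW(A,B;z_0)^{-1}}^{-1}$ named in the statement, rather than on a disc governed by $\norm{W(A,B;z_0)^{-1}B}$. Everything else follows from the identity $\phi_V(z)-zI_\cH=-p_V(z)W(A,B;z)^{-1}$ together with the bijectivity of $W(A,B;z)$.
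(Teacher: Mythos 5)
Your proof is correct and takes essentially the same route as the paper's: reduction to Corollary~\ref{cor:Delta0}, the right-factored Neumann series $W(A,B;z)=[I_\cH+(z-z_0)BW(A,B;z_0)^{-1}]W(A,B;z_0)$ giving bounded invertibility of $W(A,B;z)$ on $\cD_V(z_0)$, and the identity $p_V(z)=(zI_\cH-\phi_V(z))W(A,B;z)$. You merely make explicit the kernel transport $\fN_z(\phi_V(z))=W(A,B;z)\ker p_V(z)$ and the injectivity step, which the paper leaves implicit.
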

\begin{proof}
This is a reformulation of Corollary~\ref{cor:Delta0},
which uses the Neumann series applied to
\begin{align*}
W(A,B;zI_\fH)=&
W(A,B;z_0I_\fH)+(z-z_0)B
\\
=&
[I_\cH+(z-z_0)BW(A,B;z_0I_\fH)^{-1}]W(A,B;z_0I_\fH)
\end{align*}
as well as the equality
\[
p_V(z)=(zI_\cH-\phi_V(zI_\fH))W(A,B;zI_\fH)\,.\qedhere
\]
\end{proof}
\begin{rem}
We emphasize that $V$ in Theorem~\ref{thm:IUBP3}
and Corollaries~\ref{cor:Delta0}, \ref{cor:Delta0b}
satisfies not only \eqref{eq:dkfjrnw0}, but also
$0\notin\sigma_p(W(A,B;T^+))$. Particularly this means
that $\fN_z(T^+)$ is nontrivial iff so is
$W(A,B;zI_\fH)\fN_z(T^+)$.
\end{rem}
In a Hilbert space Example~\ref{exam:cDVz0},
$\Delta^V_\Gamma(z)=0$
for $z\in\bbC_*$ ($\subseteq\rho_V$)
iff either $z=z_0$
or $z\in\bbC_*\setm\{z_0\}$
and $\fN_z(T^*)\subseteq\ker B$; for an invertible
operator $B$, therefore, $\Delta^V_\Gamma$ does
not vanish on $\bbC_*\setm\{z_0\}$ for $T\subsetneq T^*$.
On the other hand, if $B=0$, then
the Weyl functions of unitarily
equivalent symmetric operators $T$ and $T^\prime=ATA^*$ coincide
on $\res T_0$; note that in this case
$(A\vrt_{\dom T})^{-1}=A^*\vrt_{\dom T^\prime}$
and $\dom T^\prime=A\dom T$.
A somewhat more involved example
is motivated by \cite{Hassi13} and is
considered below.
%%%%%%%%%%%%%%%%%%%%%%%%%%%%%%%%%%%%%%%%%%%%%%%%%%%%%%%%%%%%%%
%%%%%%%%%%%%%%%%%%%%%%%%%%%%%%%%%%%%%%%%%%%%%%%%%%%%%%%%%%%%%%
%%%%%%%%%%%%%%%%%%%%%%%%%%%%%%%%%%%%%%%%%%%%%%%%%%%%%%%%%%%%%%
%%%%%%%%%%%%%%%%%%%%%%%%%%%%%%%%%%%%%%%%%%%%%%%%%%%%%%%%%%%%%%
\subsection{Example: The model for a scaled
\texorpdfstring{\obt}{}}\label{exam:iota}
%%%%%%%%%%%%%%%%%%%%%%%%%%%%%%%%%%%%%%%%%%%%%%%%%%%%%%%%%%%%%%
%%%%%%%%%%%%%%%%%%%%%%%%%%%%%%%%%%%%%%%%%%%%%%%%%%%%%%%%%%%%%%
%%%%%%%%%%%%%%%%%%%%%%%%%%%%%%%%%%%%%%%%%%%%%%%%%%%%%%%%%%%%%%
%%%%%%%%%%%%%%%%%%%%%%%%%%%%%%%%%%%%%%%%%%%%%%%%%%%%%%%%%%%%%%
Suppose $V$ in Theorem~\ref{thm:IUBP3}
is such that $A=X^{-1}$, $B=C=0$, $D=X^+$ for a bijective
$X\in[\cH,\fH]$ (and hence bijective $X^+\in[\fH,\cH]$).
The \lft of $T$ is given by
\[
T^\prime=X^+TX=X^{-1}\iota TX\,,\quad
\iota:=XX^+
\]
and is therefore (generally non-unitarily) similar to $\iota T$.
The Weyl function $M_{\Gamma^\prime}$ corresponding to an \obt
$(\fL,\Gamma^\prime_0,\Gamma^\prime_1)$
for the \lft $T^{\prime\,+}=X^+T^+X$ of $T^+$, where
\[
\Gamma^\prime_i:=\Gamma_iX\,,\quad
\dom \Gamma^\prime_i=\dom T^{\prime\,+}=
X^{-1}\dom T^+\,,\quad
i\in\{0,1\}
\]
is given by
\begin{equation}
\begin{split}
M_{\Gamma^\prime}(z)=&M_\Gamma(z)+\Delta^V_\Gamma(z)\,,\quad
z\in\rho_V=\res T_0\mcap\res(\iota T_0)\,,
\\
\Delta^V_\Gamma(z)=&z\Gamma_1(\iota T_0-z)^{-1}(I-\iota)
\gamma_\Gamma(z)
\end{split}
\label{eq:solvef00}
\end{equation}
provided $\bbC_*\mcap\rho_V\neq\emptyset$. In this
case $\Delta^V_\Gamma(z)=0$ iff
\[
z(I-\iota)\fN_z(T^+)=\{0\}\,.
\]
Note that $\iota=I$ iff $X$ is a unitary
operator $J_\cH\lto J_\fH$.

At some point it will be more convenient to
express the operator $\Delta^V_\Gamma(z)$
in terms of $(T_0-z)^{-1}$ rather than
$(\iota T_0-z)^{-1}$, namely $(\forall l\in\fL)$
\begin{equation}
\begin{split}
\Delta^V_\Gamma(z)l=\Gamma_1f_0
\text{ where $f_0\in\dom T_0$ solves}
\\
f_0=(T_0-z)^{-1}(I-\iota)(zf_z+T_0f_0)
\end{split}
\label{eq:solvef0}
\end{equation}
with $f_z:=\gamma_\Gamma(z)l$. We remark that
the solution $f_0$ is unique.
%%%%%%%%%%%%%%%%%%%%%%%%%%%%%%%%%%%%%%%%%%%%%%%%%%%%%%%%%%%%%%
%%%%%%%%%%%%%%%%%%%%%%%%%%%%%%%%%%%%%%%%%%%%%%%%%%%%%%%%%%%%%%
%%%%%%%%%%%%%%%%%%%%%%%%%%%%%%%%%%%%%%%%%%%%%%%%%%%%%%%%%%%%%%
%%%%%%%%%%%%%%%%%%%%%%%%%%%%%%%%%%%%%%%%%%%%%%%%%%%%%%%%%%%%%%
\subsubsection{Constructing \texorpdfstring{$X$}{}}
%%%%%%%%%%%%%%%%%%%%%%%%%%%%%%%%%%%%%%%%%%%%%%%%%%%%%%%%%%%%%%
%%%%%%%%%%%%%%%%%%%%%%%%%%%%%%%%%%%%%%%%%%%%%%%%%%%%%%%%%%%%%%
%%%%%%%%%%%%%%%%%%%%%%%%%%%%%%%%%%%%%%%%%%%%%%%%%%%%%%%%%%%%%%
%%%%%%%%%%%%%%%%%%%%%%%%%%%%%%%%%%%%%%%%%%%%%%%%%%%%%%%%%%%%%%
Suppose a subspace $\fH_s$ of $\fH$ is projectively complete
\cite[Definition~1.7.8,
Theorem~1.7.16]{Azizov89}, \ie
$\fH=\fH_s[\dsum]\fH_a$ is the direct $J$-orthogonal
sum of the Krein spaces $\fH_s$ and $\fH_a:=\fH^{[\bot]}_s$.
By the projective completeness,
there is a $J$-orthogonal
projection $P_s$ uniquely projecting
an element from $\fH$ to an element from $\fH_s$,
\cite[Lemma~1.7.12]{Azizov89}.
We put $P_a:=I-P_s$, and in what follows
we let $\cH=\fH$, $J_\cH=J_\fH=J$.

We consider a bijective operator
$X=X^+\in[\fH]$ defined by
\[
X:=\varkappa P_s+\varkappa^{-1}P_a\,,\quad
\varkappa\in\bbR\setm\{-1,0,1\}
\]
and having the inverse
$X^{-1}=\varkappa^{-1} P_s+\varkappa P_a$.
We exclude the values $\varkappa=-1$ and
$\varkappa=1$ as they lead to trivial results.
%%%%%%%%%%%%%%%%%%%%%%%%%%%%%%%%%%%%%%%%%%%%%%%%%%%%%%%%%%%%%%
%%%%%%%%%%%%%%%%%%%%%%%%%%%%%%%%%%%%%%%%%%%%%%%%%%%%%%%%%%%%%%
%%%%%%%%%%%%%%%%%%%%%%%%%%%%%%%%%%%%%%%%%%%%%%%%%%%%%%%%%%%%%%
%%%%%%%%%%%%%%%%%%%%%%%%%%%%%%%%%%%%%%%%%%%%%%%%%%%%%%%%%%%%%%
\subsubsection{Conditions for the existence of
a fixed point of the \texorpdfstring{\lft}{} }
%%%%%%%%%%%%%%%%%%%%%%%%%%%%%%%%%%%%%%%%%%%%%%%%%%%%%%%%%%%%%%
%%%%%%%%%%%%%%%%%%%%%%%%%%%%%%%%%%%%%%%%%%%%%%%%%%%%%%%%%%%%%%
%%%%%%%%%%%%%%%%%%%%%%%%%%%%%%%%%%%%%%%%%%%%%%%%%%%%%%%%%%%%%%
%%%%%%%%%%%%%%%%%%%%%%%%%%%%%%%%%%%%%%%%%%%%%%%%%%%%%%%%%%%%%%
We now specify the conditions imposed
on $\fH_s$, $\fH_a$ for having $T^\prime=T$ with
$X$ as defined above.
\begin{prop}\label{prop:TTp}
Assume that
\[
\begin{split}
P_s\dom T^+\subseteq\dom T_0\,,\quad
P_a\dom T^+\subseteq\dom T_1\,,
\\
T_0P_s\dom T_0\subseteq\fH_a\,,\quad
T_1P_a\dom T_1\subseteq\fH_s\,.
\end{split}
\]
Then $\Gamma_0=\Gamma_0P_a$,
$\Gamma_1=\Gamma_1P_s$,
\[
\Gamma^\prime_0=\varkappa^{-1}\Gamma_0\,,
\quad
\Gamma^\prime_1=\varkappa\Gamma_1
\quad\text{on}\quad
\dom T^{\prime\,+}=\dom T^+
\]
and an operator $T^\prime_\Theta\in\mrm{Ext}_T$,
$\Theta\in\scrC(\fL)$, in
Lemma~\ref{lem:lftTTh}-\ref{item:lftTTh-2o}
is represented by
\[
T^\prime_\Theta=T_{\Theta^\prime}\in\mrm{Ext}_T
\quad\text{with}\quad
\Theta^\prime:=(\varkappa^{-1}I)\Theta
(\varkappa^{-1}I)
\]
(with $I=I_\fL$).
Particularly $\Theta^\prime=\varkappa^{-2}\Theta$
if $\Theta$ is an operator.
\end{prop}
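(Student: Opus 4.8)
The plan is to use the first two displayed hypotheses to control the boundary maps $\Gamma_0,\Gamma_1$ and the relevant domains, and the last two to establish the fixed point $T^\prime=T$.

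First I would record that $P_s$ and $P_a$ leave $\dom T^+$ invariant: for $f\in\dom T^+$ the hypotheses $P_s\dom T^+\subseteq\dom T_0$ and $P_a\dom T^+\subseteq\dom T_1$ give $P_sf,P_af\in\dom T^+$, whence $Xf,X^{-1}f\in\dom T^+$; as $X$ is bijective, $\dom T^{\prime\,+}=X^{-1}\dom T^+=\dom T^+$. The same reasoning applied to $f\in\dom T=\dom T_0\mcap\dom T_1$ shows $P_sf,P_af\in\dom T$ (for instance $P_sf\in\dom T_0$ by the first hypothesis, while $P_sf=f-P_af\in\dom T_1$ because $f,P_af\in\dom T_1$), so $X,X^{-1}$ also leave $\dom T$ invariant. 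Since $\dom T_0=\ker\Gamma_0$ and $\dom T_1=\ker\Gamma_1$, the inclusions $P_s\dom T^+\subseteq\dom T_0$ and $P_a\dom T^+\subseteq\dom T_1$ force $\Gamma_0P_sf=0$ and $\Gamma_1P_af=0$ on $\dom T^+$, i.e.
\[
\Gamma_0=\Gamma_0(P_s+P_a)=\Gamma_0P_a,\qquad
\Gamma_1=\Gamma_1(P_s+P_a)=\Gamma_1P_s.
\]
Combining this with $\Gamma^\prime_i=\Gamma_iX$ on $\dom T^{\prime\,+}=\dom T^+$ yields the scaling at once: for $f\in\dom T^+$,
\[
\Gamma^\prime_0f=\Gamma_0(\varkappa P_sf+\varkappa^{-1}P_af)=\varkappa^{-1}\Gamma_0P_af=\varkappa^{-1}\Gamma_0f,
\]
and symmetrically $\Gamma^\prime_1f=\varkappa\Gamma_1P_sf=\varkappa\Gamma_1f$.

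The main step, and the one I expect to be the genuine obstacle, is the identity $T^\prime=T$. Having shown $\dom T^\prime=X^{-1}\dom T=\dom T$, it remains to check $XTXh=Th$ for $h\in\dom T$ (recall $T^\prime h=X^+TXh=XTXh$ since $X=X^+$). The point is that the last two hypotheses encode an off-diagonal action of $T$: for $h\in\dom T$ one has $P_sh,P_ah\in\dom T$, and $TP_sh=T_0P_sh\in\fH_a$, $TP_ah=T_1P_ah\in\fH_s$. Hence
\[
TXh=\varkappa\,TP_sh+\varkappa^{-1}\,TP_ah,\qquad TP_sh\in\fH_a,\quad TP_ah\in\fH_s.
\]
Applying $X=\varkappa P_s+\varkappa^{-1}P_a$ and using $P_s\fH_a=\{0\}$, $P_a\fH_s=\{0\}$ (because $\fH_a=\fH^{[\bot]}_s$), the two projections retain only the opposite components, so the $\varkappa$-factors cancel:
\[
XTXh=\varkappa^{-1}(\varkappa\,TP_sh)+\varkappa(\varkappa^{-1}\,TP_ah)=TP_sh+TP_ah=Th.
\]
Thus $T^\prime=T$, and in particular $T^{\prime\,+}=T^+$.

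Finally I would identify the parametrized extension. By Lemma~\ref{lem:lftTTh}-\ref{item:lftTTh-2o}, $T^\prime_\Theta$ is the restriction of $T^{\prime\,+}=T^+$ to those $h\in\dom T^+$ with $(\Gamma^\prime_0h,\Gamma^\prime_1h)\in\Theta$. Substituting $\Gamma^\prime_0h=\varkappa^{-1}\Gamma_0h$ and $\Gamma^\prime_1h=\varkappa\Gamma_1h$, the condition $(\varkappa^{-1}\Gamma_0h,\varkappa\Gamma_1h)\in\Theta$ reads, by the composition convention for relations, exactly $(\Gamma_0h,\Gamma_1h)\in(\varkappa^{-1}I)\Theta(\varkappa^{-1}I)=\Theta^\prime$. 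Therefore $\dom T^\prime_\Theta=\dom T_{\Theta^\prime}$ and, since $T^\prime=T$, $T^\prime_\Theta=T_{\Theta^\prime}\in\mrm{Ext}_T$; when $\Theta$ is an operator $(\varkappa^{-1}I)\Theta(\varkappa^{-1}I)=\varkappa^{-2}\Theta$.
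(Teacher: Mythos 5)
Your proof is correct and follows essentially the same route as the paper: the same decomposition $f=P_sf+P_af$ and the same cancellation of the $\varkappa$-factors via the off-diagonal hypotheses $T_0P_s\dom T_0\subseteq\fH_a$, $T_1P_a\dom T_1\subseteq\fH_s$. The only (harmless) difference is one of direction: the paper verifies the fixed point at the level of the adjoint, showing $T^{\prime\,+}f=T^+f$ on $\dom T^+$, whereas you verify $XTXh=Th$ on $\dom T$ (which needs your extra observation that $P_s$, $P_a$ leave $\dom T$ invariant) and then pass to adjoints; you also spell out the identities $\Gamma_0=\Gamma_0P_a$, $\Gamma_1=\Gamma_1P_s$ and the translation of the boundary condition into $\Theta^\prime=(\varkappa^{-1}I)\Theta(\varkappa^{-1}I)$, steps the paper leaves implicit.
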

\begin{proof}
By hypotheses
$P_{s,a}\dom T^+\subseteq\dom T^+$ it holds
(\cf \eqref{eq:PTequiv}--\eqref{eq:domVT})
\[
\begin{split}
\dom T^+=&P_s\dom T^+[\dsum]P_a\dom T^+\,,
\\
P_s\dom T^+=&\fH_s\mcap\dom T^+\,,\quad
P_a\dom T^+=\fH_a\mcap\dom T^+\,.
\end{split}
\]
From here it follows that
\[
\dom T^{\prime\,+}=X^{-1}\dom T^+=\dom T^+\quad
\text{and}\quad
\dom T^\prime_\Theta=\dom T_{\Theta^\prime}\,.
\]

It remains to verify that
$T^{\prime\,+}f=T^+f$ for all
$f\in\dom T^+$. Using
$f=f_s+f_a$ with
$f_{s,a}\in P_{s,a}\dom T^+$ it holds
\[
T^{\prime\,+}f=X^+T^+Xf=
XT^+(\varkappa f_s+\varkappa^{-1}f_a)\,.
\]
Since $f_s\in\dom T_0$,
$T^+f_s=T_0f_s\in\fH_a$ by hypothesis
$T_0P_s\dom T_0\subseteq\fH_a$.
Similarly $T^+f_a=T_1f_a\in\fH_s$ by
$T_1P_a\dom T_1\subseteq\fH_s$. Then
\[
XT^+(\varkappa f_s+\varkappa^{-1}f_a)=
T_0f_s+T_1f_a=T^+f\,.\qedhere
\]
\end{proof}
Under conditions of Proposition~\ref{prop:TTp},
$T^\prime_0=T_0$ gives
$\iota T_0=XT_0X^{-1}$, and then
$\res(\iota T_0)=\rho_V=\res T_0$.
%%%%%%%%%%%%%%%%%%%%%%%%%%%%%%%%%%%%%%%%%%%%%%%%%%%%%%%%%%%%%%
%%%%%%%%%%%%%%%%%%%%%%%%%%%%%%%%%%%%%%%%%%%%%%%%%%%%%%%%%%%%%%
%%%%%%%%%%%%%%%%%%%%%%%%%%%%%%%%%%%%%%%%%%%%%%%%%%%%%%%%%%%%%%
%%%%%%%%%%%%%%%%%%%%%%%%%%%%%%%%%%%%%%%%%%%%%%%%%%%%%%%%%%%%%%
\subsubsection{Weyl function}
%%%%%%%%%%%%%%%%%%%%%%%%%%%%%%%%%%%%%%%%%%%%%%%%%%%%%%%%%%%%%%
%%%%%%%%%%%%%%%%%%%%%%%%%%%%%%%%%%%%%%%%%%%%%%%%%%%%%%%%%%%%%%
%%%%%%%%%%%%%%%%%%%%%%%%%%%%%%%%%%%%%%%%%%%%%%%%%%%%%%%%%%%%%%
%%%%%%%%%%%%%%%%%%%%%%%%%%%%%%%%%%%%%%%%%%%%%%%%%%%%%%%%%%%%%%
It is rather straightforward to verify that
the Weyl function corresponding to
a scaled \obt $(\fL,\varkappa^{-1}\Gamma_0,
\varkappa\Gamma_1)$ for $T^+$ is given by
$M_{\Gamma^\prime}=\varkappa^2M_\Gamma$ on
$\res T_0$. Assume that $X$ is as defined above
and that the conditions of Proposition~\ref{prop:TTp}
are satisfied. In order
to demonstrate the equality
$M_{\Gamma^\prime}=\varkappa^2M_\Gamma$
by using \eqref{eq:solvef00}
it suffices to convince oneself that
\[
f_0=(\varkappa X-I)f_z=
(\varkappa^2-1)P_sf_z\in\dom T_0
\]
solves \eqref{eq:solvef0}.
%%%%%%%%%%%%%%%%%%%%%%%%%%%%%%%%%%%%%%%%%%%%%%%%%%%%%%%%%%%%%%
%%%%%%%%%%%%%%%%%%%%%%%%%%%%%%%%%%%%%%%%%%%%%%%%%%%%%%%%%%%%%%
%%%%%%%%%%%%%%%%%%%%%%%%%%%%%%%%%%%%%%%%%%%%%%%%%%%%%%%%%%%%%%
%%%%%%%%%%%%%%%%%%%%%%%%%%%%%%%%%%%%%%%%%%%%%%%%%%%%%%%%%%%%%%
\subsubsection{Momentum operator}
%%%%%%%%%%%%%%%%%%%%%%%%%%%%%%%%%%%%%%%%%%%%%%%%%%%%%%%%%%%%%%
%%%%%%%%%%%%%%%%%%%%%%%%%%%%%%%%%%%%%%%%%%%%%%%%%%%%%%%%%%%%%%
%%%%%%%%%%%%%%%%%%%%%%%%%%%%%%%%%%%%%%%%%%%%%%%%%%%%%%%%%%%%%%
%%%%%%%%%%%%%%%%%%%%%%%%%%%%%%%%%%%%%%%%%%%%%%%%%%%%%%%%%%%%%%
In \cite[Section~5.2]{Hassi13} the momentum operator
$-\img\dd/\dd x$
is considered in the Hilbert space $\fH=L^2(\bbR;\fL)$.
The operators $T_\Theta$ and $T^\prime_\Theta$
associated with the \obt\!\!'s $(\fL,\Gamma_0,\Gamma_1)$
and $(\fL,\varkappa^{-1}\Gamma_0,\varkappa\Gamma_1)$,
with $\varkappa=\sqrt{3}$, are
parametrized by using $\Theta=\pm\img I_\fL$.
Our main message here is that $\fH_s$
(resp. $\fH_a$)
can be taken, for instance, as the space
of symmetric (resp. antisymmetric) $L^2$-functions.
Then the operator $T^\prime_\Theta=T_{\Theta^\prime}$
is similar to $\iota T_\Theta$ via
\[
Xf(x)=(2\varkappa)^{-1}
[(\varkappa^2+1)f(x)+(\varkappa^2-1)f(-x)]\,,
\quad f\in L^2(\bbR;\fL)\,.
\]
The example is easy to modify into a Krein space
setting with $J=-I$.
Then $\Pi_\Gamma$ is an \obt for $T^+=T^*$ provided
one replaces \eg $\Gamma_0$ by $-\Gamma_0$.
This results in $\gamma_\Gamma$ and $M_\Gamma$
with opposite signs.
%%%%%%%%%%%%%%%%%%%%%%%%%%%%%%%%%%%%%%%%%%%%%%%%%%%%%%%%%%%%%%
%%%%%%%%%%%%%%%%%%%%%%%%%%%%%%%%%%%%%%%%%%%%%%%%%%%%%%%%%%%%%%
%%%%%%%%%%%%%%%%%%%%%%%%%%%%%%%%%%%%%%%%%%%%%%%%%%%%%%%%%%%%%%
%%%%%%%%%%%%%%%%%%%%%%%%%%%%%%%%%%%%%%%%%%%%%%%%%%%%%%%%%%%%%%
\section{Transformation of a boundary pair
by means of
\texorpdfstring{$\Gamma\lto V\Gamma$}{}}\label{sec:VG}
%%%%%%%%%%%%%%%%%%%%%%%%%%%%%%%%%%%%%%%%%%%%%%%%%%%%%%%%%%%%%%
%%%%%%%%%%%%%%%%%%%%%%%%%%%%%%%%%%%%%%%%%%%%%%%%%%%%%%%%%%%%%%
%%%%%%%%%%%%%%%%%%%%%%%%%%%%%%%%%%%%%%%%%%%%%%%%%%%%%%%%%%%%%%
%%%%%%%%%%%%%%%%%%%%%%%%%%%%%%%%%%%%%%%%%%%%%%%%%%%%%%%%%%%%%%
In this section $\fL$, $\cH$ are Hilbert spaces,
$\fH$ is a Krein space with a fundamental symmetry $J$.

With a standard unitary operator
$V\co\whJ^\circ_\fL\lto\whJ^\circ_\cH$,
the present transformation scheme allows one
to consider transformed boundary pairs associated
with the same $T\in\wtscrC_s(\fH)$; see \eg
\cite[Sec.~2.5]{Behrndt20a},
\cite[Lemma~3.5]{Behrndt11},
\cite[Propositions~3.11 and 3.18]{Derkach09},
and references therein.
(But we recall that Theorem~\ref{thm:IUBP}
too allows one to construct transformed boundary pairs
associated with the same $T$; Section~\ref{exam:iota}.)
When $\fH$ is a Hilbert space, $\fL=\cH$, and $V$
is an injective isometric operator in
a $\whJ^\circ_\cH$-space, the transformed
boundary pairs are extensively studied in
\cite{Derkach17}, as they have direct applications
to boundary value problems of differential operators.
Our results for this type of transformation
are presented in Section~\ref{sec:exV}.

We begin with somewhat general cases where we
can take advantage of Lemmas~\ref{lem:Torth}
and \ref{lem:Wie}.
A variant of an \ibp for $T^+$ constructed from another
\ibp for $T^+$ is as follows.
\begin{thm}\label{thm:IBP-0}
Let $T\in\wtscrC_s(\fH)$, let
$(\fL,\Gamma)$ be an \ibp for $T^+$ with
Weyl family $M_\Gamma$, let $V$ be an
isometric relation
$\whJ^\circ_\fL\lto\whJ^\circ_\cH$, and
such that:
\begin{enumerate}[label=(\alph*),
ref=\alph*]
\item\label{item:IBP-0}
$\ol{\Gamma^{-1}\vrt_{\dom V}}=
\ol{\Gamma}^{\;-1}\vrt_{\ol{\dom}V}$ and
\item%\label{item:IBP-1}
$\ol{\dom}V\hsum\ran\ol{\Gamma}\in\wtscrC(\fL)$ and
\item\label{item:IBP-2}
$\mul V^+\mcap\dom\Gamma^+\subseteq\ker\Gamma^+$.
\end{enumerate}
Put $\Gamma^\prime:=V\Gamma$.
Then
$(\cH,\Gamma^\prime)$ is an \ibp for $T^+$
with the Weyl family
\[
M_{\Gamma^\prime}(z)=V(M_\Gamma(z))\,,\quad
z\in\bbC_*\,.
\]
\end{thm}
\begin{proof}
Clearly $\Gamma^\prime$ is an isometric relation
from a $\whJ_\fH$-space to a $\whJ^\circ_\cH$-space.
Put
\[
A^\prime_*:=\dom\Gamma^\prime=\Gamma^{-1}(\dom V)\,.
\]
By hypotheses \eqref{item:IBP-0}--\eqref{item:IBP-2}
and Lemma~\ref{lem:Torth}-\ref{item:Torth-1o},
the adjoint
\[
A^{\prime\,+}_*=\Gamma^+(\mul V^+)=
\Gamma^+(\mul V^+\mcap\dom\Gamma^+)=T
\]
and then the closure
$\ol{A^\prime_*}=\ol{A_*}=T^+$.
Thus $(\cH,\Gamma^\prime)$ is an \ibp for
$T^+$.

By definition, the Weyl family
\[
M_{\Gamma^\prime}(z):=\Gamma^\prime(\whfN_z(A^\prime_*))=
V(\Gamma(\whfN_z(A^\prime_*))\mcap\dom V)\,,\quad z\in\bbC_*\,.
\]
Because
\[
\whfN_z(A^\prime_*)=\{\whf_z\in\whfN_z(A_*)\vrt
(\exists\whl\in\dom V)\;(\whf_z,\whl)\in\Gamma \}
\]
it follows that
\[
\Gamma(\whfN_z(A^\prime_*))=
(M_\Gamma(z)\mcap\dom V)\hsum\mul\Gamma
\]
and then
\[
\Gamma(\whfN_z(A^\prime_*))\mcap\dom V=
M_\Gamma(z)\mcap\dom V
\]
since $\mul\Gamma\subseteq M_\Gamma(z)$.
Subsequently
\[
M_{\Gamma^\prime}(z)=V(M_\Gamma(z)\mcap\dom V)=
V(M_\Gamma(z))
\]
and this completes the proof.
\end{proof}
When $\dom V\supseteq\ran\Gamma$,
conditions \eqref{item:IBP-0}--\eqref{item:IBP-2} hold,
and we deduce the next corollary, with
$A^\prime_*=A_*$.
\begin{cor}\label{cor:IBP-0x}
Let $T\in\wtscrC_s(\fH)$, let
$(\fL,\Gamma)$ be an \ibp for $T^+$
with Weyl family $M_\Gamma$, let
$V$ be an isometric relation
$\whJ^\circ_\fL\lto\whJ^\circ_\cH$,
and such that $\dom V\supseteq\ran\Gamma$.
Then $(\cH,\Gamma^\prime)$ is an \ibp for
$T^+$ with the Weyl family $M_{\Gamma^\prime}$
as in Theorem~\ref{thm:IBP-0}.
\end{cor}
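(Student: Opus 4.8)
The plan is to show that the single assumption $\dom V\supseteq\ran\Gamma$ already forces each of the three hypotheses \eqref{item:IBP-0}, \eqref{item:IBP-1}, \eqref{item:IBP-2} of Theorem~\ref{thm:IBP-0} to hold, after which the conclusion is immediate from that theorem. The common tool is the chain of inclusions $\ran\ol{\Gamma}\subseteq\ol{\ran}\Gamma\subseteq\ol{\dom}V$, where the first inclusion holds for any relation and the second follows from $\ran\Gamma\subseteq\dom V$ upon passing to closures.

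For \eqref{item:IBP-0}, since $\dom(\Gamma^{-1})=\ran\Gamma\subseteq\dom V$ the domain restriction is redundant and $\Gamma^{-1}\vrt_{\dom V}=\Gamma^{-1}$, so the left-hand side equals $\ol{\Gamma^{-1}}=\ol{\Gamma}^{\;-1}$. On the right-hand side $\dom(\ol{\Gamma}^{\;-1})=\ran\ol{\Gamma}\subseteq\ol{\dom}V$, so the restriction is again redundant and $\ol{\Gamma}^{\;-1}\vrt_{\ol{\dom}V}=\ol{\Gamma}^{\;-1}$; the two sides coincide. For \eqref{item:IBP-1}, the closed subspace $\ol{\dom}V$ absorbs $\ran\ol{\Gamma}$, so $\ol{\dom}V\hsum\ran\ol{\Gamma}=\ol{\dom}V\in\wtscrC(\fL)$.

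The step I expect to require the most care is \eqref{item:IBP-2}, as it is the only one involving the Krein-space adjoints. Here I would first record the two identities $\mul V^+=(\dom V)^{[\bot]}$ and $\ker\Gamma^+=(\ran\Gamma)^{[\bot]}$, the orthogonal complements being taken in the $\whJ^\circ_\fL$-space $\fL^2$. These follow from the Hilbert-space formulas $\mul V^*=(\dom V)^\bot$ and $\ker\Gamma^*=(\ran\Gamma)^\bot$ together with $V^+=\whJ^\circ_\fL V^*\whJ^\circ_\cH$, $\Gamma^+=\whJ_\fH\Gamma^*\whJ^\circ_\fL$, and the fact that $\whJ^\circ_\fL$ is a unitary involution (so that $M^{[\bot]}=\whJ^\circ_\fL M^\bot$ for $M\subseteq\fL^2$). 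Since $\dom V\supseteq\ran\Gamma$, anti-monotonicity of the orthogonal complement gives $\mul V^+=(\dom V)^{[\bot]}\subseteq(\ran\Gamma)^{[\bot]}=\ker\Gamma^+$; in particular $\mul V^+\mcap\dom\Gamma^+\subseteq\ker\Gamma^+$, which is \eqref{item:IBP-2}.

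With \eqref{item:IBP-0}--\eqref{item:IBP-2} verified, Theorem~\ref{thm:IBP-0} applies and yields that $(\cH,\Gamma^\prime)$ is an \ibp for $T^+$ with $M_{\Gamma^\prime}(z)=V(M_\Gamma(z))$. Finally I would record the sharper bookkeeping claim $A^\prime_*=A_*$: by definition $A^\prime_*=\Gamma^{-1}(\dom V)$ consists of those $\whf$ for which $(\whf,\whl)\in\Gamma$ for some $\whl\in\dom V\mcap\ran\Gamma$, and since $\ran\Gamma\subseteq\dom V$ this intersection is $\ran\Gamma$, so $A^\prime_*=\dom\Gamma=A_*$.
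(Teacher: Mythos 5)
Your proposal is correct and follows the paper's own route: the paper likewise deduces the corollary by observing that $\dom V\supseteq\ran\Gamma$ forces conditions \eqref{item:IBP-0}--\eqref{item:IBP-2} of Theorem~\ref{thm:IBP-0} (and $A^\prime_*=A_*$), merely stating this without detail, whereas you supply the verification. Your checks — both sides of \eqref{item:IBP-0} reducing to $\ol{\Gamma}^{\;-1}$ via $\ran\ol{\Gamma}\subseteq\ol{\ran}\Gamma\subseteq\ol{\dom}V$, the sum in \eqref{item:IBP-1} collapsing to $\ol{\dom}V$, and \eqref{item:IBP-2} via $\mul V^+=(\dom V)^{[\bot]}\subseteq(\ran\Gamma)^{[\bot]}=\ker\Gamma^+$ — are all sound.
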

An analogue of Theorem~\ref{thm:IUBP}
is in part due to Corollary~\ref{cor:IBP-0x}.
\begin{thm}\label{thm:IUBP2xxcor}
Let $T\in\wtscrC_s(\fH)$, let
$(\fL,\Gamma)$ be an \ibp\!\!/\eeubp for $T^+$
with Weyl family $M_\Gamma$, and let
$V$ be a unitary relation
$\whJ^\circ_\fL\lto\whJ^\circ_\cH$
with closed $\dom V\supseteq \ran\Gamma$.
Put $\Gamma^\prime:=V\Gamma$.
Then $(\cH,\Gamma^\prime)$ is an \ibp\!\!/\eeubp
for $T^+$ with the Weyl family $M_{\Gamma^\prime}$
as in Theorem~\ref{thm:IBP-0}.

If moreover $\ker V\subseteq\mul\Gamma$
then $V$ establishes a 1-1
correspondence between an \ibp\!\!/\eeubp $(\fL,\Gamma)$
for $T^+$
and an \ibp\!\!/\eeubp $(\cH,\Gamma^\prime)$
for $T^+$.
\end{thm}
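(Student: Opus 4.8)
The plan is to read the basic assertion off the machinery already in place and to reserve the real work for the essentially unitary upgrade and for invertibility. First I would observe that a unitary relation is in particular isometric, so with $\dom V\supseteq\ran\Gamma$ closed the pair $(\cH,\Gamma^\prime)$ is covered verbatim by Corollary~\ref{cor:IBP-0x}: it is an \ibp for $T^+$ and its Weyl family is $M_{\Gamma^\prime}(z)=V(M_\Gamma(z))$, exactly as in Theorem~\ref{thm:IBP-0}. In particular $\Gamma^\prime$ is isometric, so $\ol{\Gamma^\prime}\subseteq\Gamma^\prime_\#$, and only the reverse inclusion remains for the \eubp claim.

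For the \eubp case I would compute $\Gamma^{\prime\,+}$ and $\ol{\Gamma^\prime}$ through Lemma~\ref{lem:Derk}. Its first bullet, with $R=V$ (closed, $\dom V$ closed) and $X=\Gamma$ (so that $\ran\Gamma\subseteq\dom V$), gives $\Gamma^{\prime\,+}=(V\Gamma)^+=\Gamma^+V^+$; inverting and using that $V$ is unitary (so $V_\#=V$ and $V^+=V^{-1}$) together with $\Gamma_\#=\ol\Gamma$ yields $\Gamma^\prime_\#=(\Gamma^+V^+)^{-1}=V_\#\Gamma_\#=V\ol\Gamma$. For the closure I would pass to the double adjoint $\ol{\Gamma^\prime}=(\Gamma^{\prime\,+})^+=(\Gamma^+V^+)^+$ and apply the second bullet of Lemma~\ref{lem:Derk} with $R=V^+$ and $Y=\Gamma^+$: here $\ran V^+=\dom V$ is closed, and the needed inclusion $\dom\Gamma^+\subseteq\ran V^+$ is exactly where the essentially unitary hypothesis enters, since $\Gamma^+=\ol\Gamma^{\,-1}$ forces $\dom\Gamma^+=\ran\ol\Gamma\subseteq\ol{\ran}\Gamma\subseteq\dom V$. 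This returns $(\Gamma^+V^+)^+=(V^+)^+(\Gamma^+)^+=V\ol\Gamma$, so $\ol{\Gamma^\prime}=V\ol\Gamma=\Gamma^\prime_\#$ and $(\cH,\Gamma^\prime)$ is an \eubp for $T^+$.

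For the $1$--$1$ correspondence I would use the identity $V^{-1}V=I_{\dom V}\hsum(\{0\}\times\ker V)$ from Section~\ref{sec:prelim2}. Composing with $\Gamma$ shows that the fibre of $V^{-1}\Gamma^\prime=(V^{-1}V)\Gamma$ over each $\whf\in\dom\Gamma$ equals $\Gamma(\whf)+\ker V$; since $\Gamma(\whf)$ is a coset of $\mul\Gamma$ and $\ker V\subseteq\mul\Gamma$, this collapses back to $\Gamma(\whf)$, giving $V^{-1}\Gamma^\prime=\Gamma$. Hence $\Gamma\mapsto V\Gamma$ is injective with inverse $\Gamma^\prime\mapsto V^{-1}\Gamma^\prime$. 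To see that the inverse map preserves the class I would verify that $V^{-1}$ satisfies the theorem's hypotheses: $V^{-1}=V^+$ is a unitary relation $\whJ^\circ_\cH\lto\whJ^\circ_\fL$; $\dom V^{-1}=\ran V=\dom V^+$ is closed (because $\dom\ol V=\dom V$ is closed); $\ran\Gamma^\prime\subseteq\ran V=\dom V^{-1}$; and $\ker V^{-1}=\mul V=V(\{0\})\subseteq V(\mul\Gamma)=\mul\Gamma^\prime$. Applying the first assertion of the theorem to the unitary relation $V^{-1}$ and the pair $(\cH,\Gamma^\prime)$ then returns $(\fL,V^{-1}\Gamma^\prime)=(\fL,\Gamma)$ as an \ibp\!\!/\eeubp for $T^+$, closing the correspondence.

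The step I expect to be the main obstacle is the closure identity $\ol{V\Gamma}=V\ol\Gamma$, i.e.\ that left composition with $V$ commutes with closure; the double-adjoint route through Lemma~\ref{lem:Derk} is precisely what makes this tractable, and the essentially unitary identity $\Gamma^+=\ol\Gamma^{\,-1}$ is the ingredient that secures the domain inclusion $\dom\Gamma^+\subseteq\ran V^+$ required by its second bullet.
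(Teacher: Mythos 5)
Your proposal is correct and takes essentially the same route as the paper: the \ibp part via Corollary~\ref{cor:IBP-0x}, the (essentially) unitary upgrade via Lemma~\ref{lem:Derk} with the key containment $\ran\ol{\Gamma}\subseteq\dom V$ forced by the closedness of $\dom V$, and the recovery $V^{-1}\Gamma^\prime=\Gamma$ from $V^{-1}V=I_{\dom V}\hsum(\{0\}\times\ker V)$ together with $\ker V\subseteq\mul\Gamma$. The only cosmetic differences are that you obtain $\ol{\Gamma^\prime}=V\ol{\Gamma}$ by running the second bullet of Lemma~\ref{lem:Derk} through the double adjoint $(\Gamma^+V^+)^+$ where the paper applies the first bullet to $V\ol{\Gamma}$ directly, and that you close the backward direction by verifying the theorem's hypotheses for $V^{-1}$ (unitarity, closedness of $\ran V=\dom V^+$, $\ran\Gamma^\prime\subseteq\ran V$) and reapplying the first part, whereas the paper performs the equivalent checks explicitly via $\Gamma^+=\Gamma^{\prime\,+}V$ and $\ol{\Gamma}=V^{-1}\ol{\Gamma^\prime}$.
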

\begin{proof}
Corollary~\ref{cor:IBP-0x} applies to an \ibp $(\fL,\Gamma)$
for $T^+$.

Suppose $(\fL,\Gamma)$ is an \eeubp for $T^+$;
hence $\ol{\Gamma}=\Gamma_\#$,
$\Gamma^{\prime\,+}=(V\ol{\Gamma})^{-1}$.
Since $\ran\ol{\Gamma}\subseteq\dom V$,
Lemma~\ref{lem:Derk} gives
$(V\ol{\Gamma})^+=(V\ol{\Gamma})^{-1}$, so
$(\cH,\Gamma^\prime)$ is an \eeubp for $T^+$.

Finally, we verify that
an \ibp\!\!/\eeubp $(\cH,\Gamma^\prime)$
for $T^+$ leads back to an \ibp\!\!/\eeubp
$(\fL,\Gamma)$ for $T^+$.
Since $\ker V\subseteq\mul\Gamma$, and then
$\dom V\supseteq\ran\Gamma$, it holds
\[
V^{-1}\Gamma^\prime=V^{-1}V\Gamma=
\{(\whf,\whl+\whxi)\in\Gamma\vrt
\whl\in\dom V\,;\,\whxi\in\ker V \}=\Gamma\,.
\]
By Lemma~\ref{lem:Derk}
$\Gamma^+=\Gamma^{\prime\,+}V$
and $\ol{\Gamma}=V^{-1}\ol{\Gamma^\prime}$, since
$\ran\Gamma^\prime=V(\ran\Gamma)\subseteq\ran V$
and
$\dom\Gamma^{\prime\,+}=V(\dom\Gamma^+)\subseteq\ran V$,
respectively.
\end{proof}
When $\dom V\subseteq\ran\Gamma$, we apply
Lemmas~\ref{lem:Torth}, \ref{lem:Wie} to a \ubp
In this case the transformed \bp is
associated with a generally different from $T$
relation.
\begin{thm}\label{thm:GunTp}
Let $T\in\wtscrC_s(\fH)$, let
$(\fL,\Gamma)$ be a \ubp for $T^+$
with Weyl family $M_\Gamma$, let $V$
be an isometric relation
$\whJ^\circ_\fL\lto\whJ^\circ_\cH$,
and such that
\[
\begin{split}
\dom V\subseteq\ran\Gamma\,,\quad
\mul V^+\mcap\ran\Gamma\subseteq\ol{\dom}V
\mcap\ran\Gamma\,,
\\
\ol{\dom}V\hsum\ran\Gamma\in\wtscrC(\fL)\,,\quad
\mul V^+\hsum\ran\Gamma\in\wtscrC(\fL)\,.
\end{split}
\]
Put $T^\prime:=\Gamma^{-1}(\mul V^+)$,
$\Gamma^\prime:=V\Gamma$.
Then:
$T^\prime\in\wtscrC_s(\fH)$,
$(\cH,\Gamma^\prime)$ is an \ibp for
the adjoint relation
$T^{\prime\,+}=\Gamma^{-1}(\ol{\dom}V)$, and
the corresponding Weyl family $M_{\Gamma^\prime}$
is as in Theorem~\ref{thm:IBP-0}.
\end{thm}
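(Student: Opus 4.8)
The plan is to recognize $A^\prime_*:=\dom\Gamma^\prime=\dom(V\Gamma)$ as the Shmul'yan transform $\Gamma^{-1}(\dom V)$ and to feed it into Lemma~\ref{lem:Torth}, the \emph{transforming} relation being $\Gamma^{-1}$ and the \emph{transformed object} being $\dom V$. The point is that $\dom V\subseteq\ran\Gamma=\dom\Gamma^{-1}$, and that $\Gamma^{-1}$ is closed because the \ubp hypothesis makes $\Gamma=\Gamma_\#=(\Gamma^+)^{-1}$ unitary. As in the proof of Theorem~\ref{thm:IBP-0}, $\Gamma^\prime=V\Gamma$ is an isometric relation from a $\whJ_\fH$-space to a $\whJ^\circ_\cH$-space, so it suffices to identify the closed symmetric relation $\ker\Gamma^\prime_\#$ and its adjoint. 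Throughout I would use the identity $(\dom R)^+=\mul R^+$, valid as an equality of subspaces for any relation $R$ between Krein spaces (by complex linearity of $(\dom R)^+$), applied both to $R=V$ and to $R=\Gamma^\prime$.

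First I would verify the standing hypothesis of Lemma~\ref{lem:Torth}, the equality in \eqref{eq:ineVT} for the pair $(\Gamma^{-1},\dom V)$. Since $\Gamma^{-1}$ is closed, Lemma~\ref{lem:Wie} gives $\ol{\Gamma^{-1}\vrt_{\dom V}}=\Gamma^{-1}\vrt_{\ol{\dom V\mcap\ran\Gamma}}=\Gamma^{-1}\vrt_{\ol{\dom V}}$, the last equality being exactly where $\dom V\subseteq\ran\Gamma$ enters. Next I would translate conditions $(V)$ and $(V^\prime)$: unitarity yields $(\Gamma^{-1})^+=(\Gamma^+)^{-1}=\Gamma$ and $(\Gamma^{-1})_\#=\Gamma^{-1}$, so $\dom\ol{\Gamma^{-1}}=\ran\Gamma$ and $\ran(\Gamma^{-1})^+=\ran\Gamma$. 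Hence $(V)$ reads $\ol{\dom V}\hsum\ran\Gamma\in\wtscrC(\fL)$ (the third hypothesis) and $(V^\prime)$ reads $(\dom V)^+\hsum\ran\Gamma=\mul V^+\hsum\ran\Gamma\in\wtscrC(\fL)$ (the fourth hypothesis).

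With the hypotheses in place, Lemma~\ref{lem:Torth}-\ref{item:Torth-1o} yields $A^{\prime\,+}_*=(\Gamma^{-1})_\#((\dom V)^+)=\Gamma^{-1}(\mul V^+)=T^\prime$, which is closed as an adjoint; by the same identity $T^\prime=A^{\prime\,+}_*=\mul\Gamma^{\prime\,+}=\ker\Gamma^\prime_\#$. Lemma~\ref{lem:Torth}-\ref{item:Torth-2o} gives $\Gamma^{-1}(\ol{\dom V})\in\wtscrC(\fH)$, and the remark following Lemma~\ref{lem:Torth} identifies $\ol{A^\prime_*}=\ol{\Gamma^{-1}(\dom V)}=\ol{\Gamma^{-1}(\ol{\dom V})}=\Gamma^{-1}(\ol{\dom V})$; since $(T^\prime)^+=A^{\prime\,++}_*=\ol{A^\prime_*}$, this is $T^{\prime\,+}$. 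For symmetry I would note that $\Gamma^{-1}(\mul V^+)$ only sees $\mul V^+\mcap\ran\Gamma$, so the second hypothesis gives $T^\prime=\Gamma^{-1}(\mul V^+\mcap\ran\Gamma)\subseteq\Gamma^{-1}(\ol{\dom V})=T^{\prime\,+}$; thus $T^\prime\in\wtscrC_s(\fH)$ and $(\cH,\Gamma^\prime)$ is an \ibp for $T^{\prime\,+}=\Gamma^{-1}(\ol{\dom V})$.

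Finally, for the Weyl family I would use $M_{\Gamma^\prime}(z)=\Gamma^\prime(\whfN_z(A^\prime_*))=V(\Gamma(\whfN_z(A^\prime_*)))$ and check the set equality $\Gamma(\whfN_z(A^\prime_*))\mcap\dom V=M_\Gamma(z)\mcap\dom V$ straight from the definitions: the inclusion $\supseteq$ holds because any $\whl\in M_\Gamma(z)\mcap\dom V$ forces its $\Gamma$-preimage into $A^\prime_*=\Gamma^{-1}(\dom V)$, while $\subseteq$ is immediate from $A^\prime_*\subseteq A_*$. Then $M_{\Gamma^\prime}(z)=V(M_\Gamma(z))$ exactly as in Theorem~\ref{thm:IBP-0}. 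The step I expect to be most delicate is purely organisational: in this scheme it is $\Gamma^{-1}$, not the given $V$, that plays the role of the transforming relation in Lemma~\ref{lem:Torth}, while the isometric $V$ enters only through $\dom V$ and $\mul V^+$; keeping this separation straight---and invoking unitarity of $\Gamma$ at the two places where $(\Gamma^{-1})^+=\Gamma$ is needed---is what makes conditions $(V)$, $(V^\prime)$ collapse onto the third and fourth hypotheses, with the identity $(\dom V)^+=\mul V^+$ the only non-formal ingredient.
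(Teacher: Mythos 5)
Your proposal is correct and takes essentially the same route as the paper's proof: you view $A^\prime_*=\Gamma^{-1}(\dom V)$ as a Shmul'yan transform with $\Gamma^{-1}$ as the transforming relation, obtain the equality in \eqref{eq:ineVT} from Lemma~\ref{lem:Wie} using $\dom V\subseteq\ran\Gamma$ and the closedness of the unitary $\Gamma$, identify $(V)$ and $(V^\prime)$ with the third and fourth hypotheses via $(\Gamma^{-1})^+=\Gamma$ and $(\dom V)^+=\mul V^+$, deduce symmetry from $\mul V^+\mcap\ran\Gamma\subseteq\ol{\dom}V\mcap\ran\Gamma$, and compute $M_{\Gamma^\prime}$ as in Theorem~\ref{thm:IBP-0}. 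The only cosmetic difference is that you get $T^{\prime\,+}=\ol{A^\prime_*}=\Gamma^{-1}(\ol{\dom}V)$ from Lemma~\ref{lem:Torth}-\ref{item:Torth-2o} together with the remark following that lemma, while the paper applies \ref{item:Torth-1o} a second time to the closed relation $\Gamma^{-1}\vrt_{\mul V^+}$---but since \ref{item:Torth-2o} is proved by exactly that second application, the two arguments coincide.
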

\begin{proof}
With the notation as in the proof
of Theorem~\ref{thm:IBP-0}
$T^\prime$ must be equal to
\[
A^{\prime\,+}_*=\mul\Gamma^{\prime\,+}=
\Gamma^{-1}(\dom V)^+\,.
\]
By applying Lemma~\ref{lem:Wie}
condition \eqref{item:IBP-0} holds.
Since $\ol{\dom}V\hsum\ran\Gamma$ is closed,
by Lemma~\ref{lem:Torth}-\ref{item:Torth-1o}
$T^\prime$ is as defined in the
theorem. Now we verify that
$T^\prime\subseteq T^{\prime\,+}$.
Since $\Gamma^{-1}\vrt_{\mul V^+}$
and $\mul V^+\hsum\ran\Gamma$
are closed relations,
again by Lemma~\ref{lem:Torth}-\ref{item:Torth-1o}
$T^{\prime\,+}$ is as given in the theorem.
Finally $\mul V^+\mcap\ran\Gamma\subseteq\ol{\dom}V
\mcap\ran\Gamma$ shows that
$T^\prime\in\wtscrC(\fH)$ is symmetric in a Krein
space $\fH$.

The computation of $M_{\Gamma^\prime}$
is as in the proof of Theorem~\ref{thm:IBP-0}.
\end{proof}
When additionally $\Gamma$ is surjective,
a \ubp $(\fL,\Gamma)$ turns into an \obt $\Pi_\Gamma$
for $T^+$. In that case $\mul\Gamma^\prime=\mul V$, so
that $\Gamma^\prime$ is not necessarily an operator.
\begin{cor}\label{cor:GunTp}
Let $T\in\wtscrC_s(\fH)$, let
$\Pi_\Gamma=(\fL,\Gamma_0,\Gamma_1)$ be an \obt
for $T^+$ with Weyl family $M_\Gamma$, and let
$V$ be an isometric relation
$\whJ^\circ_\fL\lto\whJ^\circ_\cH$
such that $\mul V^+\subseteq\ol{\dom}V$.
Then $(\cH,\Gamma^\prime)$ is an \ibp for $T^{\prime\,+}$
with the Weyl family $M_{\Gamma^\prime}$ as in
Theorem~\ref{thm:IBP-0}.
\end{cor}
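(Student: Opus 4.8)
The plan is to obtain Corollary~\ref{cor:GunTp} as the surjective special case of Theorem~\ref{thm:GunTp}. Recall that an \obt $\Pi_\Gamma=(\fL,\Gamma_0,\Gamma_1)$ for $T^+$ is precisely a \ubp $(\fL,\Gamma)$ for $T^+$ with a surjective $\Gamma$, so that $\ran\Gamma=\fL^2$ is the full relation in $\fL$. My strategy is to show that this surjectivity forces three of the four structural hypotheses of Theorem~\ref{thm:GunTp} to hold automatically, leaving only $\mul V^+\subseteq\ol{\dom}V$, which is exactly the hypothesis retained in the corollary; the result then follows by applying Theorem~\ref{thm:GunTp} to the \ubp underlying $\Pi_\Gamma$.

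First I would dispose of the containment and intersection conditions. Since $V$ is a relation $\whJ^\circ_\fL\lto\whJ^\circ_\cH$, both $\dom V$ and $\mul V^+$ are subsets of $\fL^2$; hence $\dom V\subseteq\fL^2=\ran\Gamma$ is immediate, and moreover $\mul V^+\mcap\ran\Gamma=\mul V^+$ while $\ol{\dom}V\mcap\ran\Gamma=\ol{\dom}V$. Consequently the condition $\mul V^+\mcap\ran\Gamma\subseteq\ol{\dom}V\mcap\ran\Gamma$ of Theorem~\ref{thm:GunTp} collapses to $\mul V^+\subseteq\ol{\dom}V$, which is assumed.

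Next I would handle the two closedness-of-sum hypotheses, which is the only step involving any computation. The key observation is that the componentwise sum of an arbitrary nonempty relation $S$ in $\fL$ with the full relation $\fL^2=\fL\times\fL$ again equals $\fL^2$: fixing $(f,f^\prime)\in S$ and letting $(g,g^\prime)$ range over $\fL\times\fL$, the pair $(f+g,f^\prime+g^\prime)$ already exhausts $\fL\times\fL$. Taking $S=\ol{\dom}V$ and $S=\mul V^+$ (each nonempty, as each contains $(0,0)$) gives $\ol{\dom}V\hsum\ran\Gamma=\fL^2$ and $\mul V^+\hsum\ran\Gamma=\fL^2$, both of which lie in $\wtscrC(\fL)$ since $\fL^2$ is closed in itself.

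With all four hypotheses of Theorem~\ref{thm:GunTp} verified, that theorem applies directly to the \ubp $(\fL,\Gamma)$ and yields at once that $T^\prime:=\Gamma^{-1}(\mul V^+)\in\wtscrC_s(\fH)$, that $(\cH,\Gamma^\prime)$ with $\Gamma^\prime:=V\Gamma$ is an \ibp for $T^{\prime\,+}=\Gamma^{-1}(\ol{\dom}V)$, and that its Weyl family is $M_{\Gamma^\prime}(z)=V(M_\Gamma(z))$ as in Theorem~\ref{thm:IBP-0}. I expect no genuine obstacle here: the whole content of the corollary is the bookkeeping remark that $\ran\Gamma=\fL^2$ trivializes every hypothesis of Theorem~\ref{thm:GunTp} except the single one retained.
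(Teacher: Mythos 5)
Your proposal is correct and follows exactly the route the paper intends: Corollary~\ref{cor:GunTp} is stated immediately after Theorem~\ref{thm:GunTp} precisely as its specialization to a surjective $\Gamma$ (the paper's preceding remark ``When additionally $\Gamma$ is surjective, a \ubp $(\fL,\Gamma)$ turns into an \obt $\Pi_\Gamma$ for $T^+$'' signals this), and your verification that $\ran\Gamma=\fL^2$ trivializes the inclusion $\dom V\subseteq\ran\Gamma$, collapses $\mul V^+\mcap\ran\Gamma\subseteq\ol{\dom}V\mcap\ran\Gamma$ to the retained hypothesis $\mul V^+\subseteq\ol{\dom}V$, and forces both componentwise sums to equal the closed relation $\fL^2$ is exactly the required bookkeeping. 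No gaps; the argument matches the paper's.
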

\begin{rem}\label{rem:1}
In Theorem~\ref{thm:GunTp} and Corollary~\ref{cor:GunTp}
a closed symmetric $T^\prime\in\mrm{Ext}_T$ and
$\ran\Gamma^\prime=\ran V$.
Particularly, if $V$
is an injective isometric operator
with dense range and satisfying
$\mul V^+\subseteq\dom V$, then
by \cite[Proposition~2.5]{Derkach06}
$\mul V^+=\{0\}$, so $T^\prime=T$.
We will refer to this remark thereafter.
\end{rem}
%%%%%%%%%%%%%%%%%%%%%%%%%%%%%%%%%%%%%%%%%%%%%%%%%%%%%%%%%%%%%%
%%%%%%%%%%%%%%%%%%%%%%%%%%%%%%%%%%%%%%%%%%%%%%%%%%%%%%%%%%%%%%
%%%%%%%%%%%%%%%%%%%%%%%%%%%%%%%%%%%%%%%%%%%%%%%%%%%%%%%%%%%%%%
%%%%%%%%%%%%%%%%%%%%%%%%%%%%%%%%%%%%%%%%%%%%%%%%%%%%%%%%%%%%%%
\subsection{Transformed \texorpdfstring{\ibt}{} }
%%%%%%%%%%%%%%%%%%%%%%%%%%%%%%%%%%%%%%%%%%%%%%%%%%%%%%%%%%%%%%
%%%%%%%%%%%%%%%%%%%%%%%%%%%%%%%%%%%%%%%%%%%%%%%%%%%%%%%%%%%%%%
%%%%%%%%%%%%%%%%%%%%%%%%%%%%%%%%%%%%%%%%%%%%%%%%%%%%%%%%%%%%%%
%%%%%%%%%%%%%%%%%%%%%%%%%%%%%%%%%%%%%%%%%%%%%%%%%%%%%%%%%%%%%%
Using
$\mul\Gamma^\prime=V(\mul\Gamma)$,
if $\Pi_\Gamma=(\fL,\Gamma_0,\Gamma_1)$
is an \ibt for $T^+$ and $V$ is an
isometric operator
$\whJ^\circ_\fL\lto\whJ^\circ_\cH$,
then $\Gamma^\prime$ is an isometric operator
$A^\prime_*:=\dom\Gamma^\prime\ni\whf\mapsto
(\Gamma^\prime_0\whf,\Gamma^\prime_1\whf)$ with
\begin{equation}
A^\prime_*=\{\whf\in A_*\vrt
(\Gamma_0\whf,\Gamma_1\whf)\in\dom V \}
\label{eq:Apdom}
\end{equation}
and (in matrix notation)
\begin{equation}
\begin{pmatrix}\Gamma^\prime_0 \\
\Gamma^\prime_1 \end{pmatrix}:=
V\begin{pmatrix}\Gamma_0 \\
\Gamma_1 \end{pmatrix}\quad\text{on}\quad
A^\prime_*\,.
\label{eq:Apdom2}
\end{equation}
Specifically
$A^\prime_*=A_*$ if $\dom V\supseteq\ran\Gamma$
(Corollary~\ref{cor:IBP-0x},
Theorem~\ref{thm:IUBP2xxcor}).

Similar to $T_0$ we let
$T^\prime_0:=\ker\Gamma^\prime_0$, where generally
$\Gamma^\prime_0:=\pi_0(\Gamma^\prime)$.

Given an \ibt $\Pi_\Gamma$ for $T^+$,
under suitable conditions we specify
the transformed \ibt $\Pi_{\Gamma^\prime}$ for $T^+$
in somewhat more detail by employing a different
argument than previously. Namely
\cite[Remark~7.7]{Derkach12},
if $\Gamma^\prime\co\whJ_\fH\lto\whJ^\circ_\cH$
is isometric, if
$\ker\Gamma^{\prime\,+}=\mul\Gamma^\prime$,
and if there exists a closed relation
$\fX\subseteq\fX^+\subseteq\dom\Gamma^\prime$
($\fX^+$ is the adjoint in a Krein space $\fH$),
then $\mul\Gamma^{\prime\,+}=\ker\Gamma^\prime
\subseteq\fX$;
we stress that $\fX$ need not be closed
for showing $\mul\Gamma^{\prime\,+}=\ker\Gamma^\prime$,
but it has to be closed
for having $\ker\Gamma^\prime\subseteq\fX$.
\begin{lem}\label{lem:VVV}
Let $T\in\wtscrC_s(\fH)$, let
$\Pi_\Gamma=(\fL,\Gamma_0,\Gamma_1)$
be an \ibt for $T^+$ with Weyl family
$M_\Gamma$. Assume that $\ker\Gamma=T$
and that $T_0$ is self-adjoint.
Define $\Gamma^\prime:=V\Gamma$, where
$V$ is an injective isometric operator
$\whJ^\circ_\fL\lto\whJ^\circ_\cH$
such that:
\begin{enumerate}[label=(\alph*),
ref=\alph*]
\item\label{item:VVV-a}
$\Gamma_1(T_0)\subseteq \mul\dom V$,
\item%\label{item:VVV-b}
$V(\ran\Gamma)$ is dense in $\cH^2$,
\item\label{item:VVV-c}
$T^\prime_0=T_0$.
\end{enumerate}
Then
$\Pi_{\Gamma^\prime}=(\cH,\Gamma^\prime_0,
\Gamma^\prime_1)$
is an \ibt for $T^+$
such that $\ker\Gamma^\prime=T$.
The corresponding Weyl family $M_{\Gamma^\prime}$
is as in Theorem~\ref{thm:IBP-0}.
\end{lem}
\begin{proof}
In this situation
$\ker\Gamma^\prime=\Gamma^{-1}(\ker V)=\ker\Gamma$,
$\ran\Gamma^\prime=V(\ran\Gamma)$,
$\fX:=T_0=T^\prime_0$, and it remains
to verify that $T_0\subseteq A^\prime_*$.
But this is seen from
\[
T_0\mcap A^\prime_*=
\{\whf\in T_0\vrt \Gamma_1\whf\in\mul\dom V \}
\]
by applying hypothesis \eqref{item:VVV-a}.
\end{proof}
In a Hilbert space setting
an \ibt $\Pi_{\Gamma^\prime}$
for $T^*$ with the properties as in Lemma~\ref{lem:VVV},
namely, $\Gamma^\prime$ has
dense range and $T^\prime_0$ is self-adjoint,
is known as a \qbt
(\cite[Definition~2.1]{Behrndt07}).
If moreover $\ran\Gamma^\prime_0=\cH$ then
a \qbt becomes a $B$-generalized \bt\!\!,
for which $\Gamma^\prime$ becomes unitary.
\begin{rem}\label{rem:2}
Conditions \eqref{item:VVV-a} and \eqref{item:VVV-c} hold,
for example, in the following situation.
Let $\fH$, $\cH$, $\fL$ be linear sets.
Let $\Gamma$ be a relation $\fH^2\lto\fL^2$,
$V$ a relation $\fL^2\lto\cH^2$, and let
\[
V_*:=\dom\bigl( V\mcap(\fL^2\times(\{0\}\times\cH ))
\bigr)
\]
such that
\[
\dom V_*=\{0\}\,,\quad
\Gamma_1(T_0)\subseteq\mul V_*\,.
\]
Put $\Gamma^\prime:=V\Gamma$. Then
$T^\prime_0=T_0$:
Indeed,
by definition $T^\prime_0=\Gamma^{-1}(V_*)$,
so by hypothesis $\dom V_*=\{0\}$ it holds
\[
T^\prime_0=\{\whf\in T_0\vrt
(\exists l^\prime\in\mul V_*)\;
(\whf,l^\prime)\in\Gamma_1 \}
\]
and then $T^\prime_0=T_0$ by hypothesis
$\Gamma_1(T_0)\subseteq\mul V_*$.
For example, under conditions of Theorem~\ref{thm:GunTp},
if in addition $V$ is as above
and has dense range,
then $\Gamma^\prime$ has dense range
and $T^\prime_0=T_0$. If moreover $\ran\Gamma_0$
is a subspace of $\fL$ then by
Proposition~\ref{prop:equivfNTh}
$T_0$ is self-adjoint.
\end{rem}
In closing remarks we compare
Corollary~\ref{cor:GunTp} with Lemma~\ref{lem:VVV}.
Under conditions of Corollary~\ref{cor:GunTp}
suppose in addition that $V$ is an injective
operator with dense range and such that
$\mul V^+\subseteq\dom V$. Then
$\mul V^+=\{0\}$ and $T^\prime=T$;
see Remark~\ref{rem:1} after Corollary~\ref{cor:GunTp}.
Thus, $\Pi_{\Gamma^\prime}$ is an \ibt
for $T^+$ with dense $\ran\Gamma^\prime$.
If moreover $V$ satisfies the conditions in
Remark~\ref{rem:2} then $T^\prime_0=T_0$
is self-adjoint
(and then putting $J=I$, $\Pi_{\Gamma^\prime}$
is a \qbt for $T^*$).
On the other hand, if $\Pi_\Gamma$ is an \obt for $T^+$
and $V$ is an injective isometric operator
with dense range
and satisfying the conditions in
Remark~\ref{rem:2}, then by
Lemma~\ref{lem:VVV} $\Pi_{\Gamma^\prime}$
is an \ibt for $T^+$ with dense $\ran\Gamma^\prime$
and self-adjoint $T^\prime_0=T_0$.
One sees that Corollary~\ref{cor:GunTp}
demands for a seemingly additional assumption
$\mul V^+\subseteq\dom V$ (and then $\mul V^+=\{0\}$)
for the same conclusion. The next proposition
shows, however, that $\mul V^+=\{0\}$ follows
from the rest of the properties of $V$ and $\Gamma$.
\begin{prop}\label{prop:mulV+}
Under hypotheses of Lemma~\ref{lem:VVV}
condition~\eqref{item:IBP-2} in Theorem~\ref{thm:IBP-0} holds.
\end{prop}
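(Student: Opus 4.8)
The plan is to read condition~\eqref{item:IBP-2}, namely $\mul V^+\mcap\dom\Gamma^+\subseteq\ker\Gamma^+$, as a statement about a single generic element and to settle it by transporting orthogonality through the transformed triple. Write $\Gamma^\prime:=V\Gamma$ and $A^\prime_*:=\dom\Gamma^\prime=\Gamma^{-1}(\dom V)$ as in Theorem~\ref{thm:IBP-0}. The decisive input is that, by Lemma~\ref{lem:VVV}, the pair $(\cH,\Gamma^\prime)$ is \emph{already} an \ibt for $T^+$; in particular $\mul\Gamma^{\prime\,+}=T=\mul\Gamma^+$. I stress that this is obtained in Lemma~\ref{lem:VVV} from the criterion recalled just before it (isometry of $\Gamma^\prime$, the density of $\ran\Gamma^\prime=V(\ran\Gamma)$ from \eqref{item:VVV-b} forcing $\ker\Gamma^{\prime\,+}=\{0\}=\mul\Gamma^\prime$, and the self-adjoint $\fX=T_0=T^\prime_0\subseteq A^\prime_*$ furnished by \eqref{item:VVV-a} and \eqref{item:VVV-c}), and \emph{not} from condition~\eqref{item:IBP-2} itself; hence there is no circularity in using $\mul\Gamma^{\prime\,+}=T$ here.

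Granting this, the remaining steps are short. First I would fix $\whl\in\mul V^+\mcap\dom\Gamma^+$ and choose $\whf\in\fH^2$ with $(\whl,\whf)\in\Gamma^+$. Since $\whl\in\mul V^+$, equivalently $(0,\whl)\in V^+$, the element $\whl$ is $\whJ^\circ_\fL$-orthogonal to $\dom V$; because every $\whg\in A^\prime_*$ satisfies $\Gamma\whg\in\dom V$, it follows that $\whl$ is $\whJ^\circ_\fL$-orthogonal to $\Gamma\whg$ for all $\whg\in A^\prime_*$. Feeding this into the defining property of the adjoint $\Gamma^+$ shows that $\whf$ is $\whJ_\fH$-orthogonal to $A^\prime_*$, i.e.\ $(0,\whf)\in\Gamma^{\prime\,+}$, that is $\whf\in\mul\Gamma^{\prime\,+}=T$. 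Finally, as $T=\mul\Gamma^+$ gives $(0,\whf)\in\Gamma^+$, subtracting yields $(\whl,0)=(\whl,\whf)-(0,\whf)\in\Gamma^+$, whence $\whl\in\ker\Gamma^+$, which is \eqref{item:IBP-2}.

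The step I expect to carry the weight is not a computation but the structural observation of the first paragraph: that $\mul\Gamma^{\prime\,+}=T$ is available \emph{prior} to, and independently of, the conclusion of Theorem~\ref{thm:IBP-0}. Everything else is bookkeeping with the adjoints $V^+$, $\Gamma^+$, $\Gamma^{\prime\,+}$ and the identity $\mul\Gamma^{\prime\,+}=\{\whf\vrt(0,\whf)\in\Gamma^{\prime\,+}\}$. As a byproduct worth recording, in the situation compared with Corollary~\ref{cor:GunTp}, where $\Pi_\Gamma$ is an \obt, one has $\ran\Gamma=\fL^2$, so $\dom\Gamma^+=\fL^2$ and $\ker\Gamma^+=\{0\}$; the same argument then reads $\mul V^+=\mul V^+\mcap\dom\Gamma^+\subseteq\ker\Gamma^+=\{0\}$, recovering the assertion $\mul V^+=\{0\}$ announced before the proposition and matching \cite[Proposition~2.5]{Derkach06}.
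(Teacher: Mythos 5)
Your proof is correct and is essentially the paper's own argument: the paper likewise invokes Lemma~\ref{lem:VVV} to get $T^\prime=\Gamma^{-1}(\dom V)^+=T$ and then runs the inclusion chain $T=\Gamma^{-1}(\dom V)^+\supseteq\Gamma^+(\mul V^+)\supseteq\mul\Gamma^+=T$, of which your element-wise orthogonality transport ($\whl\in\mul V^+$, $(\whl,\whf)\in\Gamma^+$ forces $\whf\in\mul\Gamma^{\prime\,+}=T$) and final subtraction step are just the unwound version. Your explicit remark that $\mul\Gamma^{\prime\,+}=T$ comes from the criterion preceding Lemma~\ref{lem:VVV} rather than from Theorem~\ref{thm:IBP-0}, so no circularity arises, correctly identifies the one point where care is needed.
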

\begin{proof}
This is because by Lemma~\ref{lem:VVV}
\[
T^\prime=\Gamma^{-1}(\dom V)^+=T\,.
\]
Then
\[
T=\Gamma^{-1}(\dom V)^+\supseteq
\Gamma^+(\mul V^+)\supseteq\mul\Gamma^+=T
\]
and it follows that
\[
T=\Gamma^+(\mul V^+)\quad\text{hence}\quad
\mul V^+\mcap\dom\Gamma^+\subseteq\ker\Gamma^+\,.\qedhere
\]
\end{proof}
Therefore,
if $\Pi_\Gamma$ is an \obt for $T^+$, if
$V$ is an injective isometric operator
with dense range and satisfying the conditions in
Remark~\ref{rem:2},
then $\mul V^+=\{0\}$, so
Corollary~\ref{cor:GunTp} and Lemma~\ref{lem:VVV}
coincide.
\begin{rem}
A sufficient condition to have
$\mul V^+=\{0\}$ for an injective isometric operator $V$
with dense range is $V^*_*\subseteq\dom V$,
where $V^*_*$ denotes
the adjoint in $\fL$ of $V_*\in\scrC(\fL)$.
The argument uses \cite[Remark~7.7]{Derkach12}, since
$V_*$ is symmetric in $\fL$,
\[
V_*=V^{-1}(\{0\}\times\cH)\subseteq
V^+(\{0\}\times\cH)\subseteq V^*_*\,.
\]
Note that the condition
$V^*_*\subseteq\dom V$ is stronger than
$\mul V^+\subseteq\dom V$.
We emphasize that $V_*$ need not be
closed in order to apply the proof of
\cite[Eq.~(7.9)]{Derkach12}.
Consider, further,
an operator $V$ in matrix form
$\bigl(\begin{smallmatrix}
A & B \\ C & D\end{smallmatrix}\bigr)$ for
some (generally) unbounded operators $A,\ldots,D$ from
$\fL$ to $\cH$. Since $\dom V_*=\{0\}$, it follows
that $V_*$ is of the form $\{0\}\times\fN_*$, with
$\fN_*:=\ker B\mcap\dom D$.
This shows that $V^*_*\subseteq\dom V$ iff
(the orthogonal complement in $\fL$)
$\fN^\bot_*\subseteq\dom A\mcap\dom C$ and
$B$, $D\in[\fL,\cH]$. Note that, for instance,
$B$ can be trivial; for $B=0$ and
a bounded $D$, $V_*=\{0\}\times\fL$ is self-adjoint
in $\fL$ and $A$ is necessarily injective.
A slight generalization of the present model
is considered in the next section.
\end{rem}
%%%%%%%%%%%%%%%%%%%%%%%%%%%%%%%%%%%%%%%%%%%%%%%%%%%%%%%%%%%%%%
%%%%%%%%%%%%%%%%%%%%%%%%%%%%%%%%%%%%%%%%%%%%%%%%%%%%%%%%%%%%%%
%%%%%%%%%%%%%%%%%%%%%%%%%%%%%%%%%%%%%%%%%%%%%%%%%%%%%%%%%%%%%%
%%%%%%%%%%%%%%%%%%%%%%%%%%%%%%%%%%%%%%%%%%%%%%%%%%%%%%%%%%%%%%
\subsection{Example: Constructing a
\texorpdfstring{\qbt}{}from
an \texorpdfstring{\ibt}{} }\label{sec:exV}
%%%%%%%%%%%%%%%%%%%%%%%%%%%%%%%%%%%%%%%%%%%%%%%%%%%%%%%%%%%%%%
%%%%%%%%%%%%%%%%%%%%%%%%%%%%%%%%%%%%%%%%%%%%%%%%%%%%%%%%%%%%%%
%%%%%%%%%%%%%%%%%%%%%%%%%%%%%%%%%%%%%%%%%%%%%%%%%%%%%%%%%%%%%%
%%%%%%%%%%%%%%%%%%%%%%%%%%%%%%%%%%%%%%%%%%%%%%%%%%%%%%%%%%%%%%
We assume that $\fH$ is a Hilbert space and
$\fL=\cH$. In \cite[Section~3.4]{Derkach17} the following
injective isometric operator in
a $\whJ^\circ_\cH$-space is defined:
\begin{equation}
V:=\begin{pmatrix}
G^{-1} & 0 \\ EG^{-1} & G^*
\end{pmatrix}\,.
\label{eq:DerV}
\end{equation}
Whenever we refer to $V$ in
\eqref{eq:DerV}, we assume that the
operators $E\subseteq E^*$ and $G$
are densely defined in $\cH$,
$G$ is injective and has dense range, so that
the adjoint $G^*$ is also an injective operator,
which is not necessarily densely defined.

By definition
\begin{align*}
\dom V=&\ran G_E\times\dom G^*\,,
\quad G_E:=G\vrt_{\dom E}\,,
\\
\ran V=&E_G\hsum(\{0\}\times\ran G^*)\,,
\quad E_G:=E\vrt_{\dom G}
\end{align*}
and subsequently
\begin{equation}
\mul V^+=\mul\ol{G}\times\ker G^*_E\,,\quad
\ker V^+=E^*_G\vrt_{\ker\ol{G}}\,.
\label{eq:kerV+}
\end{equation}
Note that $\dom E_G=\dom G_E$.
If in addition $V$ in \eqref{eq:DerV} satisfies
\begin{equation}
E^*_G\vrt_{\ker\ol{G}}=\{0\}\,,\quad
\mul\ol{G}\subseteq\ran G_E\,,\quad
\ker G^*_E\subseteq\dom G^*
\label{eq:V*opts}
\end{equation}
then $\ol{\dom}G^*=\ol{\ran}G_E=\cH$;
recall Remark~\ref{rem:1}.

Let $\Pi_\Gamma=(\cH,\Gamma_0,\Gamma_1)$
be an \ibt for $T^*$ such that
$\ker\Gamma=T$, and let $\Gamma^\prime:=V\Gamma$.
In \cite[Lemma~3.12]{Derkach17} it is stated that
$\Pi_{\Gamma^\prime}=(\cH,\Gamma^\prime_0,\Gamma^\prime_1)$
is an \ibt with domain $A^\prime_*$ and
kernel $\ker\Gamma^\prime=T$; see
\eqref{eq:Apdom}, \eqref{eq:Apdom2}.
From this statement it is not clear to us
to which closed symmetric relation the triple
$\Pi_{\Gamma^\prime}$ is associated.
On the one hand, by definition, $\Pi_{\Gamma^\prime}$
must be associated with $T^\prime=\mul\Gamma^{\prime\,+}$.
But on the other hand, since generally
$\ker\Gamma^\prime=\Gamma^{-1}(\ker V)$,
and then particularly $\ker\Gamma^\prime=\ker\Gamma=T$,
we do not see how, under the hypotheses
imposed on $V$ in \eqref{eq:DerV},
$\ker\Gamma^\prime=T$ would suffice for
showing that $T^\prime$ is symmetric in $\fH$.
Let us recap that
\[
A^\prime_*=\Gamma^{-1}(\dom V)
\subseteq A_*
\]
is not necessarily dense
in $\ol{A_*}=T^*$, as generally $T^\prime\supseteq T$.
Under additional hypotheses, however, we can
achieve that $T^\prime\subseteq T^{\prime\,*}$.

The first part of the next proposition
is the direct application of Corollary~\ref{cor:GunTp},
while the last statement follows from
Lemma~\ref{lem:VVV} or from
Corollary~\ref{cor:GunTp},
Remark~\ref{rem:2} with $V_*=\{0\}\times\dom G^*$,
and Proposition~\ref{prop:mulV+};
see the remark after Proposition~\ref{prop:mulV+}.
\begin{prop}\label{prop:VVV}
Let $T\in\wtscrC_s(\fH)$, let
$\Pi_\Gamma=(\cH,\Gamma_0,\Gamma_1)$ be an \obt for $T^*$
with Weyl function $M_\Gamma$. Let $V$ be as in \eqref{eq:DerV}
and suppose in addition that
\[
\mul\ol{G}\subseteq\ol{\ran}G_E\,.
\]
Then $\Pi_{\Gamma^\prime}=
(\cH,\Gamma^\prime_0,\Gamma^\prime_1)$, where
\[
\Gamma^\prime_0:=G^{-1}_E\Gamma_0\,,\quad
\Gamma^\prime_1:=E_GG^{-1}_E\Gamma_0+G^*\Gamma_1
\]
\[
\text{on }
A^\prime_*=\{\whf\in T^*\vrt
\Gamma_0\whf\in\ran G_E\,;\,
\Gamma_1\whf\in\dom G^* \}\,,
\]
is an \ibt for the adjoint
\[
T^{\prime\,*}=\{\whf\in T^*\vrt
\Gamma_0\whf\in\ol{\ran} G_E\,;\,
\Gamma_1\whf\in\ol{\dom} G^* \}
\]
of a closed symmetric extension
\[
T^\prime=\{\whf\in T^*\vrt
\Gamma_0\whf\in\mul\ol{G}\,;\,
\Gamma_1\whf\in\ker G^*_E \}
\]
of $T$. The Weyl function $M_{\Gamma^\prime}$
corresponding to $\Pi_{\Gamma^\prime}$ is given by
\[
M_{\Gamma^\prime}(z)=E_G+G^*M^0_\Gamma(z)G_E\,,
\quad
M^0_\Gamma(z):=M_\Gamma(z)\vrt_{\ran G_E}\,,
\]
\begin{align*}
\dom M_{\Gamma^\prime}(z)=&
\{h\in\dom E_G\vrt M^0_\Gamma(z)G_Eh\in\dom G^*\}
\subseteq\ran\Gamma^\prime_0
\\
=&\{h\in\dom E_G\vrt(\exists\whf_0\in T_0)\;
\Gamma_1\whf_0+M^0_\Gamma(z)G_Eh\in\dom G^*\}
\end{align*}
for $z\in\bbC_*$.

If in particular $V$ in \eqref{eq:DerV} satisfies
\[
E^*_G\vrt_{\ker\ol{G}}=\{0\}\,,\quad
\Gamma_1(T_0)\subseteq\dom G^*
\]
then $\ol{\dom}G^*=\ol{\ran}G_E=\cH$,
$\Pi_{\Gamma^\prime}$ is a \qbt for $T^{*}$,
and
$\dom M_{\Gamma^\prime}(z)=\ran\Gamma^\prime_0$.
\end{prop}
When $\Pi_\Gamma$ is a suitable \ibt for $T^*$,
but is not necessarily an \obt\!\!,
under a stronger assumption
$\ol{G}\in[\cH]$ we use
Lemma~\ref{lem:VVV} and Remark~\ref{rem:2}
to construct a \qbt for $T^*$;
see Theorem~\ref{thm:VVV} below.

We put
\[
G_0:=G\vrt_{G^{-1}(\dom\Theta_0)}\,,\quad
E_0:=E\vrt_{G^{-1}(\dom\Theta_0)}\,,\quad
\Theta_0:=(\ran\Gamma)\vrt_{\ran G_E}
\]
so that the inclusions
in $G_0\subseteq G_E$, $E_0\subseteq E_G$,
become the equalities
for $\ran\Gamma_0=\cH$. Note that
$\dom E_0=\dom G_0$. If $T_0$ is self-adjoint
and $M^0_\Gamma$ is as in Proposition~\ref{prop:VVV},
then by the von Neumann formula the relation
$\Theta_0$ can be represented in the form
\[
\Theta_0=M^0_\Gamma(z)\hsum
(\{0\}\times\Gamma_1(T_0))
\]
for each fixed $z\in\bbC_*$.

First we prove the lemma from which it would
follow that, under hypotheses imposed on $V$
in Theorem~\ref{thm:VVV}, $\Gamma^\prime$
has dense range.
\begin{lem}\label{lem:QBTex}
Let $\Gamma=\{(\whf,(\Gamma_0\whf,\Gamma_1\whf))\vrt
\whf\in A_* \}$ for some operators $\Gamma_0$, $\Gamma_1$
from $A_*$ to $\cH$. Let $V$ be as in \eqref{eq:DerV}
with $\ol{G}\in[\cH]$. Put
$\Gamma^\prime:=V\Gamma$.
Then
\[
\ker\Gamma^{\prime\,+}=
E^*_0+G^*\Theta^*_0\ol{G}\,.
\]
\end{lem}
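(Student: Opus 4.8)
The plan is to realize $\ker\Gamma^{\prime\,+}$ as a Krein-orthogonal complement and then to match the resulting bilinear condition with the defining condition of $E^*_0+G^*\Theta^*_0\ol{G}$ by a double inclusion. First I would record the general identity $\ker\Gamma^{\prime\,+}=(\ran\Gamma^\prime)^{[\bot]}$ in the Krein space $(\cH^2,\whJ^\circ_\cH)$; this is the Krein-space analogue of $\ker X^*=(\ran X)^\bot$, obtained by unwinding the abstract Green identity, since $(\whl,0)\in\Gamma^{\prime\,+}$ holds iff $[\whk,\whl]=0$ for every $\whk\in\ran\Gamma^\prime=V(\ran\Gamma)$. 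Writing $\whk=(k,k')$, $\whl=(l,l')$ and using $\whJ^\circ_\cH(k,k')=(-\img k',\img k)$, membership $\whl\in\ker\Gamma^{\prime\,+}$ reduces (up to the standard scalar factor) to
\[
\braket{k,l'}=\braket{k',l}\quad\text{for all }(k,k')\in\ran\Gamma^\prime .
\]

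The decisive simplification is the hypothesis $\ol{G}\in[\cH]$, which forces $G^*=(\ol{G})^*\in[\cH]$ to be bounded and everywhere defined. Hence the constraint $\Gamma_1\whf\in\dom G^*$ in \eqref{eq:Apdom} is vacuous, and $\ran\Gamma^\prime$ consists exactly of the pairs $(h,Eh+G^*u')$ with $h\in\dom E$ and $(Gh,u')\in\ran\Gamma$, i.e. with $(G_0h,u')\in\Theta_0$ and $h\in\dom E_0=\dom G_0$. Substituting $k=h$, $k'=Eh+G^*u'$ and using $\braket{G^*u',l}=\braket{u',\ol{G}l}$ (legitimate precisely because $G^*=(\ol{G})^*$ is everywhere defined), the orthogonality condition becomes
\[
\braket{h,l'}=\braket{E_0h,l}+\braket{u',\ol{G}l},\qquad (G_0h,u')\in\Theta_0;
\]
denote this condition by $(\dagger)$.

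For the inclusion $E^*_0+G^*\Theta^*_0\ol{G}\subseteq\ker\Gamma^{\prime\,+}$ I would simply substitute: if $l'=b+G^*v$ with $(l,b)\in E^*_0$ and $(\ol{G}l,v)\in\Theta^*_0$, then for every $(G_0h,u')\in\Theta_0$,
\[
\braket{h,l'}=\braket{h,b}+\braket{\ol{G}h,v}=\braket{E_0h,l}+\braket{v,G_0h}=\braket{E_0h,l}+\braket{u',\ol{G}l},
\]
using $\ol{G}h=G_0h$ and the defining identities of the two adjoints; this is exactly $(\dagger)$. The reverse inclusion is the heart of the matter. Given $(\dagger)$, I would first take $w=0$, forcing $h=0$ by injectivity of $G_0$, and let $u'$ range over the fibre, which consists of the elements of $\mul\Theta_0$; this yields $\ol{G}l\in(\mul\Theta_0)^\bot$, so the right-hand pairing in $(\dagger)$ is independent of the chosen fibre representative. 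Using the canonical decomposition $\Theta_0=M^0_\Gamma(z)\hsum(\{0\}\times\Gamma_1(T_0))$ to select the section $u'=M^0_\Gamma(z)G_0h$, the identity $(\dagger)$ should then split into a $\Theta^*_0$-part, producing $v$ with $(\ol{G}l,v)\in\Theta^*_0$, and an $E^*_0$-part, giving $b:=l'-G^*v$ with $(l,b)\in E^*_0$.

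The main obstacle is precisely this decoupling in the reverse inclusion: from the single coupled identity $(\dagger)$ one must extract the two separate adjoint memberships $l\in\dom E^*_0$ and $\ol{G}l\in\dom\Theta^*_0$. Two points require care. The operators $E_0$ and $G_0$ share the domain $\dom E_0=\dom G_0$, and the adjoint of the restriction $G_0=\ol{G}\vrt_{\dom G_0}$ satisfies $G^*_0=G^*\hsum(\{0\}\times(\dom G_0)^\bot)$, so when one solves $\braket{G^*_0v,h}=\braket{h,l'-b}$ the value of $G^*v$ is determined only up to an element of $(\dom G_0)^\bot$. Since $(\dom G_0)^\bot=(\dom E_0)^\bot=\mul E^*_0$, that ambiguity is absorbed into the multivalued part of $E^*_0$, so the decomposition $l'=b+G^*v$ persists with $(l,b)\in E^*_0$. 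I expect this bookkeeping---the fibre separation giving $\ol{G}l\in(\mul\Theta_0)^\bot$, the identification of $\dom\Theta^*_0$ via the operator-part section, and the absorption of the $(\dom G_0)^\bot$-term into $\mul E^*_0$---to be the only delicate step; everything else is direct substitution justified by $G^*\in[\cH]$.
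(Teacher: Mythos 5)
Your reduction of $\ker\Gamma^{\prime\,+}$ to the $\whJ^\circ_\cH$-orthogonality condition $(\dagger)$, the use of $\ol{G}\in[\cH]$ to get an everywhere defined $G^*$, the resulting parametrization of $\ran\Gamma^\prime$ by $h\in\dom E_0=\dom G_0$, and the inclusion $E^*_0+G^*\Theta^*_0\ol{G}\subseteq\ker\Gamma^{\prime\,+}$ by direct substitution all agree with the paper's proof; your bookkeeping remark that the ambiguity $\mul G^*_0=(\dom G_0)^\bot=\mul E^*_0$ is absorbed by the multivalued part of $E^*_0$ is also correct.

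The reverse inclusion, however, is not proved, and you flag this yourself (``should then split'', ``I expect this bookkeeping''). Concretely: from $(\dagger)$ you must produce $v$ with $(\ol{G}l,v)\in\Theta^*_0$. Your fibre argument yields only $\ol{G}l\in(\mul\Theta_0)^\bot$, which contains $\dom\Theta^*_0$ but is in general strictly larger; and after inserting the section $u'=M^0_\Gamma(z)G_0h$, condition $(\dagger)$ collapses to the single identity
\[
\braket{h,l^\prime}_\cH=\braket{E_0h,l}_\cH+
\braket{M^0_\Gamma(z)G_0h,\ol{G}l}_\cH\,,\quad h\in\dom E_0\,,
\]
which asserts only that $(l,\ol{G}l,l^\prime)$ lies in the adjoint of the coupled map $h\mapsto(E_0h,M^0_\Gamma(z)G_0h)$. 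Splitting this into the two separate memberships $l\in\dom E^*_0$ and $\ol{G}l\in\dom M^0_\Gamma(z)^*$ is precisely the content of the lemma and follows from no general principle, since the adjoint of a sum strictly contains the sum of adjoints: for instance, with $G=I_\cH$ and $\Theta_0$ the graph of $-E_0$, every pair $(l,0)$ satisfies $(\dagger)$, whereas membership in $E^*_0+\Theta^*_0$ forces $l\in\dom E^*_0$. Note also that the decomposition $\Theta_0=M^0_\Gamma(z)\hsum(\{0\}\times\Gamma_1(T_0))$ you invoke presupposes the von Neumann formula, hence the standing hypotheses of Theorem~\ref{thm:VVV} ($\ker\Gamma=T$, self-adjoint $T_0$), which the lemma does not assume; the paper's proof never uses it. Instead, the paper takes an arbitrary $(r,s_1)\in G^*\Theta^*_0$, subtracts the universally valid identity \eqref{eq:bcA} from the orthogonality relation, and characterizes membership by the pair of conditions $(h^\prime,h-s_1)\in E^*_0$ and $\ol{G}h^\prime-r\in\ker\Theta^*_0$, the second of which annihilates the $\Gamma_1$-pairing because $\ker\Theta^*_0=(\ran\Theta_0)^\bot=\Gamma_1(A^\prime_*)^\bot$; the stated formula then follows from $\ker(G^*\Theta^*_0)=\ker\Theta^*_0$ and the equivalence $(\ol{G}h^\prime,s_1)\in G^*\Theta^*_0\Leftrightarrow(h^\prime,s_1)\in G^*\Theta^*_0\ol{G}$. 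That route reduces the whole decoupling to the single membership $\ol{G}h^\prime\in\dom\Theta^*_0$ (admittedly the tersest point of the paper's own argument as well), whereas your sketch needs two memberships and establishes neither; as written, the proposal proves only the inclusion $E^*_0+G^*\Theta^*_0\ol{G}\subseteq\ker\Gamma^{\prime\,+}$.
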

\begin{proof}
By definition and hypothesis
$\dom G^*=\cH$ the operator
\[
\Gamma^\prime=
\{(\whf,(G^{-1}_0\Gamma_0\whf,E_0G^{-1}_0\Gamma_0\whf+
G^*\Gamma_1\whf))\vrt \whf\in A^\prime_* \}\,,
\]
\[
A^\prime_*=
\{\whf\in A_*\vrt
(\Gamma_0\whf,\Gamma_1\whf)\in\Theta_0 \}\,.
\]
The boundary condition for
$\whf\in A^\prime_*$ implies that
$(\exists(r,s)\in\Theta^*_0)$
\[
\braket{\Gamma_0\whf,s}_\cH=
\braket{\Gamma_1\whf,r}_\cH\,.
\]
Put $s_1:=G^*s$; then
$(r,s)\in\Theta^*_0$ $\Leftrightarrow$
$(r,s_1)\in G^*\Theta^*_0$. Because
$s=G^{*\,-1}s_1$ and
\[
\Gamma_0(A^\prime_*)=\dom\Theta_0=\ran G_0
\]
it follows that
\[
\braket{\Gamma_0\whf,s}_\cH=
\braket{\Gamma_0\whf,G^{*\,-1}s_1}_\cH=
\braket{G^{-1}_0\Gamma_0\whf,s_1}_\cH\,.
\]
Therefore
\begin{equation}
(\forall\whf\in A^\prime_*)\;
(\exists(r,s_1)\in G^*\Theta^*_0)\;
\braket{G^{-1}_0\Gamma_0\whf,s_1}_\cH=
\braket{\Gamma_1\whf,r}_\cH\,.
\label{eq:bcA}
\end{equation}

The $\whJ^\circ_\cH$-orthogonal complement
$(\ran\Gamma^\prime)^{[\bot]}=\ker\Gamma^{\prime\,+}$
consists of
$(h^\prime,h)\in\cH^2$ such that
\[
(\forall\whf\in A^\prime_*)\;
\braket{G^{-1}_0\Gamma_0\whf,h}_\cH-
\braket{E_0G^{-1}_0\Gamma_0\whf,h^\prime}_\cH=
\braket{\Gamma_1\whf,\ol{G}h^\prime}_\cH\,.
\]
By \eqref{eq:bcA} this implies that
each $(h^\prime,h)\in(\ran\Gamma^\prime)^{[\bot]}$
is characterized by
$(r,s_1)\in G^*\Theta^*_0$ such that
\[
(h^\prime,h-s_1)\in
E^*_0\,,\quad
\ol{G}h^\prime-r\in
\ker\Theta^*_0\,;
\]
the second relation uses that
$\Gamma_1(A^\prime_*)=\ran\Theta_0$.

Putting $s_0:=h-s_1$ and $r_1:=r-\ol{G}h^\prime$
we conclude that $(\ran\Gamma^\prime)^{[\bot]}$
consists of $(h^\prime,s_0+s_1)$ such that
$(h^\prime,s_0)\in E^*_0$ and
$(\exists r_1\in\ker\Theta^*_0)$
$(\ol{G}h^\prime+r_1,s_1)\in G^*\Theta^*_0$;
we note that $-r_1\in\ker\Theta^*_0$ iff
$r_1\in\ker\Theta^*_0$. By noting also that
$\ker(G^*\Theta^*_0)=\ker\Theta^*_0$, and that
$(\ol{G}h^\prime,s_1)\in G^*\Theta^*_0$ iff
$(h^\prime,s_1)\in G^*\Theta^*_0\ol{G}$,
the claim
of the lemma follows as stated.
\end{proof}
As an exercise, with $\ran\Gamma=\cH^2$,
one verifies that $(\ran V)^{[\bot]}=\ker V^+$
is as in \eqref{eq:kerV+}.
\begin{thm}\label{thm:VVV}
Let $T\in\wtscrC_s(\fH)$, let
$\Pi_\Gamma=(\cH,\Gamma_0,\Gamma_1)$ be
an \ibt for $T^*$ with Weyl function $M_\Gamma$
and such that $\ker\Gamma=T$ and $T_0$
is self-adjoint.
Let $V$ be as in \eqref{eq:DerV} and suppose in
addition that $\ol{G}\in[\cH]$ and
\[
\dom E^*_0\mcap\dom(\Theta^*_0\ol{G})=\{0\}\,,
\quad
\ol{\dom}E_0=\ol{\dom}\Theta_0=\cH\,.
\]
Then $\Pi_{\Gamma^\prime}=
(\cH,\Gamma^\prime_0,\Gamma^\prime_1)$
is a \qbt for $T^*$, where
\[
\Gamma^\prime_0:=G^{-1}_0\Gamma_0\,,\quad
\Gamma^\prime_1:=E_0G^{-1}_0\Gamma_0+G^*\Gamma_1
\]
\[
\text{on }
A^\prime_*=
\{\whf\in A_*\vrt
(\Gamma_0\whf,\Gamma_1\whf)\in\Theta_0 \}\,.
\]
The Weyl function $M_{\Gamma^\prime}$
corresponding to $\Pi_{\Gamma^\prime}$ is given by
\[
M_{\Gamma^\prime}(z)=E_0+G^*M^0_\Gamma(z)G_0\,,
\quad
\dom M_{\Gamma^\prime}(z)=\dom E_0
\]
for $z\in\bbC_*$.
\end{thm}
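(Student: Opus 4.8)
The plan is to exhibit $\Pi_{\Gamma^\prime}$ as the image of $\Pi_\Gamma$ under the scheme $\Gamma\mapsto V\Gamma$ of Section~\ref{sec:VG} and to invoke Lemma~\ref{lem:VVV}, whose three hypotheses (a)--(c) I would verify one at a time for the specific $V$ in \eqref{eq:DerV}. Since $\ol{G}\in[\cH]$ forces $\dom G^*=\cH$, the relation $\dom V=\ran G_E\times\dom G^*$ has $\mul\dom V=\dom G^*=\cH$, so hypothesis (a), namely $\Gamma_1(T_0)\subseteq\mul\dom V$, holds trivially. For hypothesis (c), $T^\prime_0=T_0$, I would apply Proposition~\ref{prop:V*} with $V_*=\{0\}\times\dom G^*$: inspecting \eqref{eq:DerV}, a pair $(l,l^\prime)\in\dom V$ is carried by $V$ into $\{0\}\times\cH$ exactly when $l=0$ (as $G$ is an injective operator) and $l^\prime\in\dom G^*$, whence $\dom V_*=\{0\}$ and $\mul V_*=\dom G^*=\cH$; the two hypotheses of Proposition~\ref{prop:V*} are then immediate and yield $T^\prime_0=T_0$, which is self-adjoint by assumption.

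The heart of the argument is hypothesis (b), the density of $\ran\Gamma^\prime=V(\ran\Gamma)$ in $\cH^2$. Because the $\whJ^\circ_\cH$-orthogonal complement of $\ran\Gamma^\prime$ equals $\ker\Gamma^{\prime+}$, and a $J$-orthogonal complement in a Krein space is trivial precisely when the underlying set is dense, it suffices to show $\ker\Gamma^{\prime+}=\{(0,0)\}$. Here I would invoke Lemma~\ref{lem:QBTex}, applicable since $\Gamma$ is an operator and $\ol{G}\in[\cH]$, to obtain $\ker\Gamma^{\prime+}=E^*_0+G^*\Theta^*_0\ol{G}$. The domain of this operatorwise sum is $\dom E^*_0\mcap\dom(G^*\Theta^*_0\ol{G})=\dom E^*_0\mcap\dom(\Theta^*_0\ol{G})$, using $\dom G^*=\cH$, and this is $\{0\}$ by hypothesis; its multivalued part is $\mul E^*_0+\mul(G^*\Theta^*_0\ol{G})$, and here $\mul E^*_0=(\ol{\dom}E_0)^\bot=\{0\}$ while $\mul(G^*\Theta^*_0\ol{G})=\{0\}$ follow from $\ol{\dom}E_0=\ol{\dom}\Theta_0=\cH$ together with the dense definedness of $G$ (so that $\mul\Theta^*_0=\{0\}$ and $\mul G^*=\{0\}$). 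A relation with trivial domain and trivial multivalued part is $\{(0,0)\}$, so $\ran\Gamma^\prime$ is dense and (b) holds.

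With (a)--(c) verified, Lemma~\ref{lem:VVV} gives that $\Pi_{\Gamma^\prime}=(\cH,\Gamma^\prime_0,\Gamma^\prime_1)$ is an \ibt for $T^*$ with $\ker\Gamma^\prime=T$ and dense $\ran\Gamma^\prime$; combined with $T^\prime_0=T_0$ self-adjoint, this is by definition a \qbt for $T^*$ (the characterization recalled after Lemma~\ref{lem:VVV}, with $J=I$). The explicit forms of $\Gamma^\prime_0$, $\Gamma^\prime_1$ and of $A^\prime_*$ would then be read off from $\Gamma^\prime=V\Gamma$ and \eqref{eq:Apdom}--\eqref{eq:Apdom2}: for $\whf\in A^\prime_*$ one has $\Gamma_0\whf\in\dom\Theta_0$, so $G^{-1}\Gamma_0\whf=G^{-1}_0\Gamma_0\whf$ and $EG^{-1}\Gamma_0\whf=E_0G^{-1}_0\Gamma_0\whf$, giving the stated operators.

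For the Weyl function I would use $M_{\Gamma^\prime}(z)=V(M_\Gamma(z))$ from Theorem~\ref{thm:IBP-0}: writing a generic element of $M_\Gamma(z)$ as $(l,l^\prime)$ with $l\in\ran G_E$ and hence $l\in\dom\Theta_0=\dom M^0_\Gamma(z)$, and substituting $h:=G^{-1}l=G^{-1}_0l$ (so $l=G_0h$, $EG^{-1}l=E_0h$, and $l^\prime\in\dom G^*$ automatically), one gets $M_{\Gamma^\prime}(z)=\{(h,E_0h+G^*l^\prime)\vrt h\in\dom E_0\,,\,(G_0h,l^\prime)\in M^0_\Gamma(z)\}$, that is $M_{\Gamma^\prime}(z)=E_0+G^*M^0_\Gamma(z)G_0$ with $\dom M_{\Gamma^\prime}(z)=\dom E_0$. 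The main obstacle will be the density step (b): everything funnels through Lemma~\ref{lem:QBTex} and the precise bookkeeping of the domain and multivalued part of $E^*_0+G^*\Theta^*_0\ol{G}$, which is exactly where the two standing hypotheses $\dom E^*_0\mcap\dom(\Theta^*_0\ol{G})=\{0\}$ and $\ol{\dom}E_0=\ol{\dom}\Theta_0=\cH$ are consumed.
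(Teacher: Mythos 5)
Your proposal is correct and follows essentially the same route as the paper's proof: density of $\ran\Gamma^\prime$ via Lemma~\ref{lem:QBTex} combined with the two standing hypotheses, $T^\prime_0=T_0$ via Proposition~\ref{prop:V*} with $V_*=\{0\}\times\dom G^*$, and the conclusion via Lemma~\ref{lem:VVV}, with the Weyl function read off from $M_{\Gamma^\prime}(z)=V(M_\Gamma(z))$ of Theorem~\ref{thm:IBP-0}. You merely make explicit the bookkeeping the paper leaves implicit (the domain and multivalued part of $E^*_0+G^*\Theta^*_0\ol{G}$, and $\dom\Theta_0=\dom M^0_\Gamma(z)$ from the von Neumann formula), and this is done correctly.
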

\begin{proof}
By hypotheses and Lemma~\ref{lem:QBTex}
and Remark~\ref{rem:2},
$\ran\Gamma^\prime$ is dense in $\cH^2$
and $T^\prime_0=T_0$. Then the claim
follows from Lemma~\ref{lem:VVV}.
\end{proof}
We mention that
\[
\dom(\Theta^*_0\ol{G})=
\{h\in\cH\vrt
\ol{G}h\in\dom M^0_\Gamma(z)^*\mcap
\Gamma_1(T_0)^\bot \}\,.
\]
This suggests that, in case $\ol{G}\in[\cH]$ is injective,
the disjointness of sets
$\dom E^*_0$ and $\dom(\Theta^*_0\ol{G})$
can be controlled by the solely
condition $\ol{\Gamma_1(T_0)}=\cH$.
\begin{rem}
We make some comments
on special cases of Theorem~\ref{thm:VVV}:

$\circ$
Assume that $\Pi_\Gamma$ is an $AB$-generalized \bt
for $T^*$; that is
\cite[Definition~1.8]{Derkach17},
$\Pi_\Gamma$ is an \ibt for $T^*$ such that
$T_0$ is self-adjoint and $\ran\Gamma_0$
is dense in $\cH$. By \cite[Theorem~4.2(i)]{Derkach17}
$\ker\Gamma=T$, so we may apply Theorem~\ref{thm:VVV}.
According to \cite[Lemma~3.7(vi), Theorem~4.2(iv),
Corollary~4.3(ii)]{Derkach17} the
orthogonal complement
\[
\Gamma_1(T_0)^\bot=\ker\ol{\gamma_\Gamma(z)}
=\ker\ol{\Im M_\Gamma(z)}\,,
\quad \ol{\gamma_\Gamma(z)}\in[\cH,\fH]\,,
\quad z\in\bbC_*
\]
where $\gamma_\Gamma$ is the $\gamma$-field
corresponding to $\Pi_\Gamma$ and
the imaginary part
$\Im M_\Gamma(z):=(M_\Gamma(z)-
M_\Gamma(z)^*)/(2\img)$; notice that
$M_\Gamma(z)$ is a densely defined operator
with domain $\ran\Gamma_0$.

$\circ$
A $B$-generalized \bt\!\!,
a \qbt\!\!, an \obt\!\!, and an $S$-generalized
\bt are all special cases of an
$AB$-generalized \bt
For instance,
if $\Pi_\Gamma$ is a $B$-generalized \bt
for $T^*$, that is, an $AB$-generalized \bt
with $\ran\Gamma_0=\cH$, and if
$V$ is as in \eqref{eq:DerV} with
\begin{equation}
\ol{G}\in[\cH]\quad\text{and}\quad
\dom E^*_G\mcap\ker\ol{G}=\{0\}
\quad\text{and}\quad
\ol{\dom}E_G=\cH
\label{eq:EGlast}
\end{equation}
then $\Pi_{\Gamma^\prime}$
is a \qbt for $T^*$ as described in
Theorem~\ref{thm:VVV}.
In this case
$\ol{\dom}\Theta_0=\ol{\ran}G_E=\cH$ is automatically
satisfied by the above and \eqref{eq:V*opts}.
The present variant of Theorem~\ref{thm:VVV} also
follows from \cite[Theorem~7.9]{Wietsma12},
\cite{Wietsma13}, where
a \qbt is considered in a Krein space setting.
For $G=I_\cH$, the result is known from
\cite[Theorem~7.57]{Derkach12}, see also
\cite[Theorem~4.4]{Derkach17} with a nontrivial
$\mul\Gamma$.

$\circ$
Finally, if we let, for example,
$\Pi_\Gamma$ be an \obt for $T^*$,
then for $V$ as in \eqref{eq:DerV} and \eqref{eq:EGlast},
Theorem~\ref{thm:VVV} reduces to the last statement
of Proposition~\ref{prop:VVV};
see the remark above Eq.~(A.13) in
\cite{Wietsma12}, where the converse implication
is stated as well.
\end{rem}
\section*{Acknowledgments}
The author would like to thank the anonymous
referees for their helpful comments.

% \subsection*{Conflict of interest}
% The author declares no potential conflict of interests.

% \bibliography{cite-equiv-exts}

\end{document}